\title[Rigidity results for free boundary hypersurfaces]{Rigidity results for free boundary hypersurfaces in initial data sets with boundary}
\author{Deivid de Almeida}
\address{Instituto Federal de Educação, Ciência e Tecnologia do Ceará, Crateús, CE, Brazil.}
\email{deivid.almeida@ifce.edu.br}
\author{Abraão Mendes}
\address{Instituto de Matemática, Universidade Federal de Alagoas, Maceió, AL, Brazil.}
\email{abraao.mendes@im.ufal.br}
\newtheorem{theo}{Theorem}[section]
\newtheorem{theointro}{Theorem}
\newtheorem{theointroletter}{Theorem}
\newtheorem{lem}[theo]{Lemma}
\newtheorem{prop}[theo]{Proposition}
\newtheorem{cor}[theo]{Corollary}
\theoremstyle{remark}
\newtheorem{rmk}[theo]{Remark}
\newtheorem{example}[theo]{Example}
\newcommand{\g}{\mathbf{g}}
\DeclareMathOperator{\divergente}{div}
\DeclareMathOperator{\II}{II}
\renewcommand{\div}{\divergente}
\numberwithin{equation}{section}
\newcommand{\ds}{\displaystyle}
\begin{document}

\raggedbottom

\setstretch{1.2}

\begin{abstract}
In this work, we present several rigidity results for compact free boundary hypersurfaces in initial data sets with boundary. Specifically, in the first part of the paper, we extend the local splitting theorems from [G.~J.~Galloway and H.~C.~Jang, \textit{Some scalar curvature warped product splitting theorems}, Proc.\ Am.\ Math.\ Soc.\ \textbf{148} (2020), no.~6,\linebreak 2617--2629] to the setting of manifolds with boundary. To achieve this, we build on the approach of the original paper, utilizing results on free boundary marginally outer trapped surfaces (MOTS) applied to specific initial data sets. In the second part, we extend the main results from [A.~Barros and C.~Cruz, \textit{Free boundary hypersurfaces with non-positive Yamabe invariant in mean convex manifolds}, J.\ Geom.\ Anal.\ \textbf{30} (2020), no.~4, 3542--3562] to the context of free boundary MOTS in initial data sets with boundary.
\end{abstract}

\maketitle

\section{Introduction}

In differential geometry, there are several results aimed at understanding the geometry and topology of Riemannian manifolds that have a lower bound on their scalar curvature. In the case of dimension \(n = 2\), a classical result by R.~Schoen and S.-T.~Yau~\cite{ShoenYau1979} shows that any closed surface that minimizes area in a three-dimensional Riemannian manifold with positive scalar curvature is homeomorphic to either \(\mathbb{S}^2\) or \(\mathbb{RP}^2\).

In this context, another result due to Schoen and Yau~\cite{ShoenYau1987} is the following:

\begin{theointro}[Schoen-Yau, 1987]
Let \((M^{n+1}, g)\), with \(n \geq 2\), be a Riemannian manifold with positive scalar curvature, \(R^M > 0\). If \(\Sigma\) is a closed, two-sided, stable minimal hypersurface in \(M\), then \(\Sigma\) admits a metric of positive scalar curvature.
\end{theointro}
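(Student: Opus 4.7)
The plan is to combine the second variation inequality for $\Sigma$ with the twice-contracted Gauss equation to produce a weighted Dirichlet inequality that dominates the quadratic form associated with the conformal Laplacian of $\Sigma$.

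First I would write the stability inequality: fixing a global unit normal $\nu$ (available since $\Sigma$ is two-sided), for every $\phi \in C^\infty(\Sigma)$ one has
\[
\int_\Sigma |\nabla \phi|^2\, d\sigma \;\geq\; \int_\Sigma \left( |\II|^2 + \mathrm{Ric}^M(\nu,\nu) \right) \phi^2\, d\sigma.
\]
Since $\Sigma$ is minimal ($H \equiv 0$), the twice-contracted Gauss equation gives
\[
2\, \mathrm{Ric}^M(\nu,\nu) \;=\; R^M - R^\Sigma - |\II|^2.
\]
Substituting and rearranging yields the key inequality
\[
\int_\Sigma \left( 2|\nabla \phi|^2 + R^\Sigma \phi^2 \right) d\sigma \;\geq\; \int_\Sigma \left( |\II|^2 + R^M \right) \phi^2\, d\sigma,
\]
whose right-hand side is strictly positive for every nontrivial $\phi$, because $R^M > 0$.

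The argument then splits according to dimension. For $n \geq 3$, the conformal Laplacian $L_\Sigma = -\frac{4(n-1)}{n-2}\Delta + R^\Sigma$ of $(\Sigma, g|_\Sigma)$ satisfies $\frac{4(n-1)}{n-2} \geq 2$, so the key inequality implies that its quadratic form is strictly positive, and hence its first eigenvalue $\lambda_1(L_\Sigma)$ is positive. The associated positive first eigenfunction $u$ then defines a conformal metric $\tilde g = u^{4/(n-2)} g|_\Sigma$ with scalar curvature $R^{\tilde g} = u^{-(n+2)/(n-2)} L_\Sigma u = \lambda_1(L_\Sigma) \, u^{-4/(n-2)} > 0$. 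For $n = 2$, I would test the inequality with $\phi \equiv 1$ to obtain $\int_\Sigma R^\Sigma\, d\sigma > 0$; Gauss--Bonnet then forces $\chi(\Sigma) > 0$, so $\Sigma$ is diffeomorphic to $\mathbb{S}^2$ or $\mathbb{RP}^2$, each of which carries a metric of positive scalar curvature.

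I expect the main conceptual step to be the identification of coefficients: the stability plus Gauss combination yields the factor $2$ in front of $|\nabla \phi|^2$, and the success of the conformal approach hinges on the algebraic fact $\frac{4(n-1)}{n-2} \geq 2$ for all $n \geq 3$. In the endpoint $n=2$ the conformal Laplacian is not defined and the topological Gauss--Bonnet substitute is required, which is precisely why the theorem must be proved by two parallel arguments rather than a single uniform one.
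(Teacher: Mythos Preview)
Your argument is correct and is the classical Schoen--Yau proof. However, the paper does not give its own proof of this statement: it is quoted in the introduction as a known result from~\cite{ShoenYau1987}, purely as historical context, so there is no proof in the paper to compare against.

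That said, the mechanism you isolate --- combining the stability inequality with the traced Gauss equation to produce the coefficient $2$ in front of $|\nabla\phi|^2$, and then exploiting the algebraic inequality $a_n=\frac{4(n-1)}{n-2}>2$ to pass to the conformal Laplacian --- is exactly the device the paper itself employs later in its own results (see the derivation of inequality~\eqref{eq4:prop:aux1} in the proof of Proposition~\ref{prop:aux1}, and the role of Lemma~\ref{lem:Mendes1}). So your approach is fully consistent with the techniques of the paper; there is simply nothing in the paper for this particular theorem to weigh it against.
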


In~\cite{Cai2002}, M.~Cai proved the following splitting theorem, assuming that \(\Sigma\) minimizes volume rather than merely being stable (see \cite{Galloway2011} for a simplified proof).

\begin{theointro}[Cai, 2002]\label{theo:Cai}
Let \((M^{n+1}, g)\), with \(n \geq 2\), be a Riemannian manifold with non-negative scalar curvature, \(R^M \geq 0\). Suppose \(\Sigma\) is a closed, two-sided minimal\linebreak hypersurface that minimizes volume in \(M\). If \(\Sigma\) does not admit a metric of positive scalar curvature, then there exists a neighborhood \(V\) of \(\Sigma\) in $M$ such that \((V, g|_V)\) is isometric to \((-\delta,\delta)\times\Sigma\) with the product metric \(dt^2 + h\), where \(h = g|_\Sigma\) and \((\Sigma, h)\) is Ricci-flat.
\end{theointro}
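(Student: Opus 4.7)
The plan is to construct a constant mean curvature (CMC) foliation of a neighborhood of $\Sigma$ and then rigidify it using the volume-minimizing property, the bound $R^M \geq 0$, and the no-PSC hypothesis. Parametrizing nearby hypersurfaces as normal graphs via the exponential map, the linearization at $u = 0$ of the mean-curvature map is the Jacobi operator $L = \Delta_\Sigma + Q$ with $Q = |A|^2 + \text{Ric}^M(N,N)$. Stability, which follows from volume minimization, gives $\int_\Sigma(|\nabla\psi|^2 - Q\psi^2) \geq 0$ for all $\psi$, i.e.\ $\lambda_1(-L) \geq 0$. An implicit function theorem argument applied to $u \mapsto H(u) - \overline{H(u)}$ yields a smooth family $\{\Sigma_t\}_{t \in (-\delta,\delta)}$ of CMC hypersurfaces with $\Sigma_0 = \Sigma$ and positive lapse $\rho_t$; along the family the lapse satisfies $-L_t \rho_t \equiv H'(t)$ as a constant on each leaf.

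The crucial step is to show $\lambda_1(-L) = 0$. Combining stability with the contracted Gauss equation at $H = 0$, namely $2\,\text{Ric}^M(N,N) = R^M - R^\Sigma - |A|^2$, one obtains the Schoen--Yau bound
\[
\int_\Sigma \bigl(2|\nabla\psi|^2 + R^\Sigma \psi^2\bigr) \;\geq\; \int_\Sigma \bigl(|A|^2 + R^M\bigr)\psi^2 \quad \text{for all } \psi \in C^\infty(\Sigma).
\]
If $\lambda_1(-L) > 0$, testing against $\psi = \phi^\alpha$ for a suitable exponent $\alpha$ and solving the corresponding conformal Laplacian equation would produce a metric on $\Sigma$ of strictly positive scalar curvature, contradicting the no-PSC hypothesis. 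Hence $\lambda_1(-L) = 0$, forcing $H'(0) = 0$, and the lapse $\phi = \rho_0$ is the positive first eigenfunction of $-L$. Equality in the Schoen--Yau bound, together with the Kazdan--Warner dichotomy (on a manifold not admitting PSC, any metric with $R \geq 0$ must be Ricci-flat), then yields $|A| \equiv 0$, $R^M|_\Sigma \equiv 0$, $\phi$ constant on $\Sigma$, and $(\Sigma, h)$ Ricci-flat.

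I would next show that the rigidity propagates along the foliation: the same analysis applied to each leaf $\Sigma_t$ (which is diffeomorphic to $\Sigma$, hence admits no PSC, and lies in the ambient manifold with $R^M \geq 0$) yields $H(t) \equiv 0$, $|A_t| \equiv 0$, and $\rho_t$ constant on each leaf, for every $t \in (-\delta,\delta)$. In Fermi coordinates $g = dt^2 + g_t$ along the foliation, this gives $\partial_t g_t = -2\rho_t A_t \equiv 0$, so $g_t \equiv h := g|_\Sigma$ and a neighborhood of $\Sigma$ is isometric to $((-\delta,\delta) \times \Sigma, \, dt^2 + h)$ with $(\Sigma, h)$ Ricci-flat.

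The principal difficulty is the rigidity step: converting the stability inequality (with coefficient $2$ in front of $|\nabla\psi|^2$) into the existence of a PSC metric on $\Sigma$. For $n \geq 3$ this is handled via the classical Schoen--Yau test function $\psi = \phi^\alpha$ with $\alpha$ chosen to match the conformal Laplacian coefficient $\frac{4(n-1)}{n-2}$; for $n = 2$ one argues directly via Gauss--Bonnet. Sharpening scalar-flatness to Ricci-flatness is the other delicate point, resolved by the Bourguignon--Kazdan--Warner trichotomy. Galloway's simplified proof streamlines both steps via the CMC-foliation approach sketched above, bypassing the cut-and-paste surgery of Cai's original argument.
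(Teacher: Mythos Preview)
The paper does not contain a proof of this statement. Theorem~\ref{theo:Cai} is quoted in the introduction as a known result of Cai~\cite{Cai2002} (with reference to Galloway~\cite{Galloway2011} for a simplified proof), and no argument for it appears anywhere in the text. So there is no ``paper's own proof'' to compare your proposal against; I can only comment on the proposal itself.

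Your sketch follows the Galloway-style CMC-foliation strategy and the infinitesimal rigidity at $\Sigma=\Sigma_0$ is correctly outlined. The propagation step, however, has a real gap. You write that ``the same analysis applied to each leaf $\Sigma_t$ \dots\ yields $H(t)\equiv 0$'', but the analysis you want to rerun (the Schoen--Yau rearrangement plus Kazdan--Warner) requires $\Sigma_t$ to be \emph{minimal and stable}; for $t\neq 0$ you only know $\Sigma_t$ is CMC, and you have not yet shown $H(t)=0$ or that $\Sigma_t$ is stable. The no-PSC property of $\Sigma_t$ and the ambient bound $R^M\ge 0$ are not by themselves enough.

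The standard way to close this gap is to first force $H(t)\equiv 0$ by combining the volume-minimizing hypothesis with the evolution of $H$ along the foliation. From $-L_t\rho_t=H'(t)$ one derives, after the Gauss-equation rewriting you already used, a differential inequality of the form
\[
H'(t)\int_{\Sigma_t}\rho_t^{-1}\,dv_t \;\le\; C\,H(t)
\]
for some bounded $C$ (cf.\ the calculus lemma used later in this paper, Lemma~\ref{lem:Mendes4}). Meanwhile $\operatorname{vol}(\Sigma_t)\ge\operatorname{vol}(\Sigma)$ controls the sign of $H(t)$ near $t=0$ via the first variation of volume. These two ingredients together pin $H(t)\equiv 0$. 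Only after that do the leaves become minimal with positive Jacobi field $\rho_t$, hence stable, and your ``same analysis'' legitimately applies to each $\Sigma_t$ to give $A_t\equiv 0$, $\rho_t$ constant, and the product splitting.
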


The case of the theorem above for \(n = 2\) was proved by M.~Cai and G.~J.~Galloway~\cite{CaiGalloway2000}\linebreak (see the comment following Theorem~\ref{theo:Nunes} below).

More recently, G.~J.~Galloway and H.~C.~Jang~\cite{GallowayJang2020} obtained a splitting theorem for a neighborhood of the boundary \(\Sigma\) of a Riemannian manifold \((M, g)\), assuming a lower bound on the scalar curvature of \(M\) and an upper bound on the mean curvature of \(\Sigma\), while also assuming that \(\Sigma\) does not admit a metric of positive scalar curvature. To prove this theorem, they used several results from the theory of marginally outer trapped surfaces (MOTS), considering the special initial data set \((M, g, K = -\epsilon g)\), where \(\epsilon = 0\) or \(1\). More precisely, they proved the following result:

\begin{theointro}[Galloway-Jang, 2020]\label{theo:Galloway-Jang}
Let \((M^{n+1}, g)\), with \(n \geq 2\), be a Riemannian manifold with compact boundary \(\Sigma\). Fix \(\epsilon = 0\) or \(1\), and assume the following conditions:
\begin{enumerate}
\item[$(1)$] The scalar curvature of \(M\) satisfies \(R^M \geq -(n+1)n\epsilon\);
\item[$(2)$] The mean curvature of \(\Sigma\) satisfies \(H^\Sigma \leq n\epsilon\);
\item[$(3)$] \(\Sigma\) does not admit a metric of positive scalar curvature;
\item[$(4)$] \(\Sigma\) is locally weakly outermost.\footnote{In Theorem~\ref{theo:Galloway-Jang}, the term ``locally weakly outermost'' corresponds to being ``locally weakly outermost with respect to $H_0 = n\epsilon$'' in our terminology (see \cite[p.~2]{GallowayJang2020}).}
\end{enumerate}
Under these assumptions, there exists a neighborhood \(V\) of \(\Sigma\) in \(M\) such that \((V, g|_V)\) is isometric to \([0, \delta) \times \Sigma\) with the warped product metric \(dt^2 + e^{2\epsilon t} h\), where \(h = g|_\Sigma\) and \((\Sigma, h)\) is Ricci-flat.
\end{theointro}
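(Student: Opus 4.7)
The plan is to follow the MOTS-based strategy advertised in the introduction: interpret the boundary $\Sigma$ as a weakly outer trapped surface in the initial data set $(M,g,K)$ with $K=-\epsilon g$, establish a Schoen--Yau-type conformal rigidity on $\Sigma$, propagate this rigidity along a CMC foliation of a collar of $\Sigma$, and finally read off the warped-product splitting.

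\textbf{Reformulation and rigidity on $\Sigma$.} With $K=-\epsilon g$ the momentum current vanishes identically, the energy density $\mu=\tfrac{1}{2}(R^{M}+n(n{+}1)\epsilon^{2})$ is non-negative by (1) (using $\epsilon^{2}=\epsilon$), and the outward null expansion is $\theta^{+}=H^{\Sigma}-n\epsilon\leq 0$ by (2). The locally weakly outermost hypothesis (4) then forces $\theta^{+}\equiv 0$ on $\Sigma$ via a standard barrier/perturbation argument, so $\Sigma$ is a genuine MOTS with $H^{\Sigma}=n\epsilon$. Because $K$ is a constant multiple of the metric, the drift vector field in the MOTS stability operator vanishes and one obtains the symmetric operator $L\phi=-\Delta\phi+Q\phi$, with $Q=\tfrac{1}{2}R^{\Sigma}-\mu-\tfrac{1}{2}|\chi|^{2}$ and $\chi=A-\epsilon h$. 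Outermost yields $\lambda_{1}(L)\geq 0$ and hence the stability inequality $\int_{\Sigma}(|\nabla\phi|^{2}+\tfrac{1}{2}R^{\Sigma}\phi^{2})\geq\int_{\Sigma}(\mu+\tfrac{1}{2}|\chi|^{2})\phi^{2}\geq 0$ for every $\phi>0$. For $n\geq 3$, since $\tfrac{4(n-1)}{n-2}\geq 2$, this dominates the Yamabe functional, so the Yamabe invariant of $(\Sigma,h)$ is non-negative; by hypothesis (3) it must vanish, and the equality analysis (the Yamabe minimizer is constant, then saturation in the stability inequality) forces $\lambda_{1}(L)=0$, $\chi\equiv 0$ (i.e., $A=\epsilon h$), $\mu\equiv 0$ on $\Sigma$ (i.e., $R^{M}=-n(n{+}1)\epsilon$ along $\Sigma$), and $R^{\Sigma}\equiv 0$. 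The case $n=2$ is handled analogously via Gauss--Bonnet, identifying $\Sigma$ with a flat torus.

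\textbf{CMC foliation, propagation, and warped-product structure.} Because the kernel of $L$ contains a nowhere vanishing function, the implicit function theorem yields a one-sided foliation $\{\Sigma_{t}\}_{t\in[0,\delta)}$ of a collar of $\Sigma$ by hypersurfaces of constant mean curvature $c(t)$, with $\Sigma_{0}=\Sigma$ and smoothly varying positive lapse $\phi_{t}$ (normalized so $\phi_{0}\equiv 1$); the outermost hypothesis propagates to $c(t)\geq n\epsilon$. Differentiating $H^{\Sigma_{t}}=c(t)$ along the flow and invoking the Gauss equation produces an integrated identity relating $c'(t)$, $\int|\nabla\phi_{t}|^{2}$, $\int|(A_{t})_{0}|^{2}$, $\int(R^{M}+n(n{+}1)\epsilon^{2})$, $c(t)^{2}-n^{2}\epsilon^{2}$, and $\int R^{\Sigma_{t}}$. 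Applying the rigidity of the previous step leaf-by-leaf (each $\Sigma_{t}$ is diffeomorphic to $\Sigma$ and inherits the no-PSC obstruction) forces saturation on every leaf: $c(t)\equiv n\epsilon$, $\phi_{t}\equiv 1$, $(A_{t})_{0}\equiv 0$, $R^{M}|_{\Sigma_{t}}\equiv-n(n{+}1)\epsilon$, $R^{\Sigma_{t}}\equiv 0$, and $A_{t}=\epsilon h_{t}$. From $\phi_{t}\equiv 1$ and $A_{t}=\epsilon h_{t}$ we read off $\partial_{t}h_{t}=2\epsilon h_{t}$, hence $h_{t}=e^{2\epsilon t}h$ and $g|_{V}=dt^{2}+e^{2\epsilon t}h$ on $V\cong[0,\delta)\times\Sigma$; Ricci-flatness of $(\Sigma,h)$ then follows from the standard Schoen--Yau/Kazdan--Warner deformation rigidity at the scalar-flat, Yamabe-zero metric $h$.

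\textbf{Main obstacle.} The delicate point is the leaf-by-leaf propagation: converting the Schoen--Yau rigidity on $\Sigma$ into a differential inequality for $c(t)$ sharp enough to force $c(t)\equiv n\epsilon$ requires choosing test functions on each leaf (balancing the CMC lapse $\phi_{t}$ against the Yamabe eigenfunction) so that the extra warping contributions $c(t)^{2}-n^{2}\epsilon^{2}$ arising in the non-minimal case $\epsilon=1$ are absorbed cleanly by the scalar- and mean-curvature hypotheses. The low-dimensional case $n=2$, where the Yamabe conformal method degenerates, must be handled separately by Gauss--Bonnet, and the final Ricci-flat conclusion requires a further deformation argument beyond the warped-product identity.
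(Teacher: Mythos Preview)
Your overall strategy matches the paper's: pass to the initial data set $(M,g,-\epsilon g)$, recognize $\Sigma$ as a stable, locally weakly outermost MOTS, and extract a warped-product collar. The paper itself proves only the free-boundary extension (Theorem~\ref{theo:A}); the closed Galloway--Jang statement is the specialization to empty $\partial M$, and that proof is what one compares against.

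The route differs substantially in economy. After checking that $\Sigma$ is a stable MOTS (via the null mean curvature flow lemma for $H^\Sigma\equiv n\epsilon$, and symmetry of $L$ since $X=0$), the paper does \emph{not} run a Yamabe-type rigidity on $\Sigma$ or build a CMC foliation by hand. It invokes two black boxes. First, the MOTS splitting theorem (Theorem~\ref{teorema:Mendes}; in the closed case, Galloway's rigidity for weakly outermost MOTS) delivers the entire foliation by MOTS together with the decomposition $g=\varphi^2\,dt^2+h_t$ in one step. Then on each leaf the equation $L_t\varphi=0$ reads $-\Delta\varphi+\bigl(\tfrac12 R^{\Sigma_t}-P_t\bigr)\varphi=0$ with $P_t=\tfrac12\bigl(R^M+n(n{+}1)\epsilon+|\chi_t^+|^2\bigr)\ge 0$, and the Galloway--Schoen conformal lemma (Lemma~\ref{lem:Mendes1}) gives, under the no-PSC hypothesis, $\varphi$ constant, $P_t\equiv 0$, and $(\Sigma_t,h_t)$ Ricci-flat --- all at once, uniformly in $n\ge 2$. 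Only the ODE $\partial_t h_{ij}=2\epsilon h_{ij}$ remains.

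Your plan instead re-derives both ingredients: the Yamabe comparison plus Kazdan--Warner substitutes for Lemma~\ref{lem:Mendes1}, and the CMC foliation with a differential inequality for $c(t)$ substitutes for the splitting theorem. This can be made to work, and your ``Main obstacle'' correctly locates the hard step. But as written, ``applying the rigidity of the previous step leaf-by-leaf'' is slightly circular --- that rigidity assumed the leaf is already a MOTS, which is precisely what you must prove; the real mechanism is feeding the no-PSC obstruction into the evolution identity including the $-\tfrac12(\theta^+)^2+\theta^+\tau$ correction terms. The paper's route sidesteps all of this: no separate $n=2$ case via Gauss--Bonnet, no Kazdan--Warner for Ricci-flatness, and no propagation inequality to engineer.
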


They also observed that an analogous result holds if the condition on the mean curvature of \(\Sigma\) is replaced by \(H^\Sigma \leq -n\epsilon\). In this case, \((V, g|_V)\) will be isometric to \(([0, \delta) \times \Sigma, dt^2 + e^{-2\epsilon t}h)\).
 
Below, we present the first result of this work, inspired by Theorem~\ref{theo:Galloway-Jang}, the proof of which will be provided in Section~\ref{sec:proof.Theo.A}.

\begin{theointroletter}\label{theo:A}
Let \((M^{n+1}, g)\), with \(n \geq 2\), be a Riemannian manifold with boundary, and let \(\Sigma\) be a compact, free boundary hypersurface in \(M\). Fix \(\epsilon = 0\) or \(1\), and assume the following conditions:
\begin{enumerate}
\item[$(1)$] The manifold \(M\) has scalar curvature \(R^M \geq -(n+1)n\epsilon\), and its boundary has mean curvature \(H^{\partial M} \geq 0\);
\item[$(2)$] The hypersurface \(\Sigma\) has mean curvature \(H^{\Sigma} \leq n\epsilon\);
\item[$(3)$] \(\Sigma\) does not admit a metric with positive scalar curvature and minimal boundary;
\item[$(4)$] \(\Sigma\) is locally weakly outermost with respect to \(H_0 = n\epsilon\).
\end{enumerate}
Then, there exists an outer neighborhood \(V\) of \(\Sigma\) in \(M\) such that \((V, g|_V)\) is isometric to \([0, \delta) \times \Sigma\) with the warped product metric \(dt^2 + e^{2\epsilon t} h\), where \(h = g|_\Sigma\) and \((\Sigma, h)\) is Ricci-flat with a totally geodesic boundary.
\end{theointroletter}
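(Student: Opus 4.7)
The plan is to adapt the Galloway-Jang strategy to the free boundary setting through free boundary MOTS theory. First I would view $\Sigma$ as an outer trapped free boundary hypersurface in the initial data set $(M, g, K = -\epsilon g)$: with respect to the outward unit normal $\nu$, the outgoing null expansion is $\theta^+ = H^\Sigma + \mathrm{tr}_\Sigma K = H^\Sigma - n\epsilon$, which is non-positive by hypothesis (2). Hypothesis (4) then reads precisely as the statement that $\Sigma$ is locally weakly outermost as a free boundary MOTS, so by the free boundary MOTS stability theory developed earlier in the paper, the principal eigenvalue of the MOTS stability operator $L$ on $\Sigma$ (with the Robin-type boundary condition on $\partial\Sigma$ induced by the free boundary constraint) satisfies $\lambda_1(L) \geq 0$; a standard maximum principle argument also yields $\theta^+ \equiv 0$ on $\Sigma$, so $H^\Sigma \equiv n\epsilon$.

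Next, using a positive first eigenfunction $\phi$ of $L$, I would perform an Escobar-type conformal change on $\Sigma$ to produce a metric whose scalar curvature and boundary mean curvature have controllable signs. On $K = -\epsilon g$ data the potential $Q$ decomposes as $\frac{1}{2}R^\Sigma - \frac{1}{2}(R^M + n(n+1)\epsilon) - \frac{1}{2}|A^\Sigma - \epsilon g_\Sigma|^2$, and the Robin coefficient on $\partial\Sigma$ is controlled by $H^{\partial M}|_{\partial\Sigma}$. Invoking $R^M \geq -n(n+1)\epsilon$ and $H^{\partial M} \geq 0$ from (1), the stability inequality $\int_\Sigma(|\nabla\phi|^2 + Q\phi^2) + (\text{boundary term}) \geq 0$ converts, after the conformal change $\tilde h = \phi^{4/(n-2)} h$, into the existence of a metric on $\Sigma$ with non-negative scalar curvature and minimal boundary. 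Since (3) forbids the strictly positive case, all inequalities are forced to be equalities, yielding $R^\Sigma \equiv 0$, $A^\Sigma \equiv \epsilon g_\Sigma$, $\partial\Sigma$ totally geodesic in $\Sigma$, $H^{\partial M}|_{\partial\Sigma} \equiv 0$, $R^M|_\Sigma \equiv -n(n+1)\epsilon$, and $\phi$ a positive constant.

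With this infinitesimal rigidity in place, I would construct an outer foliation $\{\Sigma_t\}_{t \in [0,\delta)}$ of a neighborhood of $\Sigma$ by free boundary hypersurfaces of prescribed mean curvature. Writing each $\Sigma_t$ as the normal graph of $u_t : \Sigma \to \mathbb{R}$ meeting $\partial M$ orthogonally, I would apply the implicit function theorem to the map $u \mapsto H^{\Sigma_u} - n\epsilon - c$ with Lagrange multiplier $c \in \mathbb{R}$; the linearization at $u=0$ is essentially $L$, whose kernel is one-dimensional and spanned by the constant function, so a smooth family $\{(u_t, c(t))\}_{t \in [0,\delta)}$ with $u_0 = 0$ and $\dot u_0 > 0$ exists. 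The locally weakly outermost hypothesis forces $c(t) \leq 0$ for $t > 0$, and then an argument in the spirit of Galloway-Jang — integrating the scalar curvature and boundary inequalities along the foliation and exploiting the $t=0$ rigidity — forces $c(t) \equiv 0$ and propagates the rigidity to every leaf. This yields the warped product structure $dt^2 + e^{2\epsilon t} h$ with $(\Sigma, h)$ Ricci-flat and $\partial\Sigma$ totally geodesic.

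I expect the main difficulty to lie in the consistent treatment of the boundary throughout. The Robin coefficient in $L$ on $\partial\Sigma$ and the boundary term in the conformal transformation formula for the mean curvature of $\partial\Sigma$ must align so that the sign of $H^{\partial M} \geq 0$ produces a metric with minimal boundary in the correct direction; and in the foliation step, the function space for the graph functions $u_t$ must encode the orthogonality condition in a way that the linearization matches $L$ (boundary condition included), so that the constant function genuinely lies in the kernel and the implicit function theorem applies cleanly.
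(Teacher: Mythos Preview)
Your overall strategy---pass to the initial data set $(M,g,K=-\epsilon g)$, recognize $\Sigma$ as a weakly outer trapped free boundary hypersurface, upgrade to a MOTS, extract infinitesimal rigidity from stability together with hypothesis~(3), and then foliate---is exactly the paper's. Two concrete gaps, however, need attention. First, the step ``a standard maximum principle argument also yields $\theta^+\equiv 0$'' is not standard: having $\theta^+\le 0$ with strict inequality somewhere does not by itself contradict local weak outermostness, since that condition only rules out competitors with $\theta^+<0$ \emph{everywhere}. The paper devotes Lemma~\ref{lem:K=-eg} to this, running the free boundary null mean curvature flow and applying a parabolic maximum principle with Neumann boundary condition (Theorem~\ref{theo:PrincipioMaximo} and Proposition~\ref{prop:Ambrozio}) to produce a nearby free boundary $\Sigma'$ with $\theta^+<0$ throughout; this is the free boundary analogue of Andersson--Metzger's Lemma~5.2 and is genuinely new input. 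Second, your conformal change $\tilde h=\phi^{4/(n-2)}h$ is undefined for $n=2$, yet the theorem is stated for $n\ge2$. The paper sidesteps this by invoking Lemma~\ref{lem:Mendes1}, which is formulated for $n\ge 2$ and directly yields the dichotomy ``PSC with minimal boundary, or Ricci-flat with totally geodesic boundary and $u$ constant'' from the supersolution inequality; you would need either to cite that lemma or to give a separate argument in dimension two.

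Beyond these gaps, your route for the foliation and propagation of rigidity differs from the paper's. You propose an implicit-function-theorem construction of a CMC (shifted by $n\epsilon$) free boundary foliation and then a Galloway--Jang style integration along it. The paper instead verifies that $(M,g,-\epsilon g)$ satisfies both the DEC and the BDEC and applies Theorem~\ref{teorema:Mendes} as a black box to obtain an outer neighborhood $V\cong[0,\delta)\times\Sigma$ with $g|_V=\varphi^2\,dt^2+h_t$ and each $\Sigma_t$ a free boundary MOTS; it then applies Lemma~\ref{lem:Mendes1} on every leaf to force $\varphi$ constant on $\Sigma_t$, $\chi_t^+\equiv 0$ (hence $A_t=\epsilon h_t$), and $(\Sigma_t,h_t)$ Ricci-flat with totally geodesic boundary, from which $h_{ij}(t,x)=e^{2\epsilon t}h_{ij}(0,x)$ follows by integrating $\partial_t h_{ij}=2\epsilon h_{ij}$. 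Your IFT approach can be made to work (and would require exactly the boundary bookkeeping you flag), but the paper's route is shorter because the heavy lifting---existence of the MOTS foliation and much of the leafwise rigidity---is already packaged in Theorem~\ref{teorema:Mendes} and Lemma~\ref{lem:Mendes1}.
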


As in the closed case, an analogous result holds if we replace \( H^\Sigma \leq n\epsilon \) and \( dt^2 + e^{2\epsilon t} h \) above by \( H^\Sigma \leq -n\epsilon \) and \( dt^2 + e^{-2\epsilon t} h \), respectively, assuming that $\Sigma$ is locally weakly outermost with respect to $H_0=-n\epsilon$.

At the end of Section~\ref{sec:proof.Theo.A}, we present examples that illustrate the necessity of condition~(3) (for $\epsilon = 0$ or $1$) and condition~(4) (for $\epsilon = 1$) in Theorem~\ref{theo:A}. These examples are inspired by those of Galloway and Jang (see~\cite[Remark~1]{GallowayJang2020}) in the closed case.

As noted, rigidity results involving scalar curvature have garnered significant attention from geometers over the years. The following theorem, due to H.~Bray, S.~Brendle, and A.~Neves~\cite{BrayBrendleNeves2010}, concerns surfaces and assumes the existence of an embedded sphere that locally minimizes area in a Riemannian manifold with scalar curvature bounded below by a positive constant. We restate it as follows:

\begin{theointro}[Bray-Brendle-Neves, 2010] 
Let \((M^3, g)\) be a Riemannian manifold with scalar curvature \(R^M \ge 2c\), for some constant \(c > 0\). If \(\Sigma^2 \subset M^3\) is an embedded sphere that locally minimizes area, then the area of \(\Sigma\) satisfies
$$
A(\Sigma) \leq \frac{4 \pi}{c}.
$$
Moreover, if equality holds, then there exists a neighborhood \(V\) of \(\Sigma\) in \(M\) such that \((V, g|_V)\) is isometric to \((- \delta, \delta) \times \Sigma\) with the product metric \(dt^2 + h\), where \(h = g|_\Sigma\) and \((\Sigma, h)\) is the round sphere of radius \(1/\sqrt{c}\). In particular, if \(M\) is complete and \(\Sigma\) minimizes area in its isotopy class, then the universal cover of \(M\) is isometric to the product \((\mathbb{R} \times \Sigma, dt^2 + h)\), assuming equality holds.
\end{theointro}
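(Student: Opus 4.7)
The plan is to proceed in three steps: first, derive the area estimate from the stability inequality combined with Gauss--Bonnet; second, establish the rigidity in the equality case via a constant mean curvature foliation; third, upgrade the local splitting to the stated description of the universal cover.

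For the area estimate, I use that a locally area-minimizing two-sided surface is in particular stable and minimal, so the second variation inequality
$$\int_\Sigma \bigl(|\nabla\varphi|^2 - (|A|^2 + \mathrm{Ric}^M(\nu,\nu))\varphi^2\bigr)\, dA \geq 0$$
holds for every $\varphi \in C^\infty(\Sigma)$. Taking $\varphi \equiv 1$ and combining with the traced Gauss equation $2\,\mathrm{Ric}^M(\nu,\nu) = R^M - R^\Sigma - |A|^2$ (valid since $H = 0$) gives
$$\int_\Sigma R^M\, dA + \int_\Sigma |A|^2\, dA \leq \int_\Sigma R^\Sigma\, dA = 8\pi$$
by Gauss--Bonnet on the topological sphere $\Sigma$. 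With $R^M \geq 2c$, this forces $A(\Sigma) \leq 4\pi/c$; equality forces $|A| \equiv 0$, $R^M \equiv 2c$, and $\mathrm{Ric}^M(\nu,\nu) \equiv 0$ along $\Sigma$, together with $(\Sigma, h)$ of constant Gauss curvature $c$, hence the round sphere of radius $1/\sqrt{c}$.

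For the local rigidity, assuming equality, I would construct a local foliation $\{\Sigma_t\}_{t \in (-\delta,\delta)}$ of a tubular neighborhood of $\Sigma$ by constant mean curvature surfaces, realized as normal graphs $\Sigma_t = \{\exp_p(u(t,p)\nu(p)) : p \in \Sigma\}$ with the normalization $\int_\Sigma u(t,\cdot)\, dA = t\,|\Sigma|$. The Jacobi operator reduces to $-\Delta$ at $t = 0$, which is invertible on the $L^2$-orthogonal complement of constants, so the implicit function theorem produces such $u$ for $|t|$ small. Writing $A(t)$ and $H(t)$ for the area and constant mean curvature of $\Sigma_t$ and $\rho_t > 0$ for the lapse, one has $A'(t) = H(t)\int_{\Sigma_t}\rho_t\, dA_t$. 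Running the stability-style computation from the first step on each $\Sigma_t$ (second variation with $\varphi \equiv 1$, traced Gauss equation, Gauss--Bonnet) yields, after controlling the extra terms arising from $H(t) \neq 0$, the refined bound $A(t) \leq 4\pi/c$. Combining this with the reverse inequality $A(t) \geq A(0) = 4\pi/c$ coming from local area-minimization of $\Sigma$ forces $A(t) \equiv 4\pi/c$, and the sharp case analysis propagated along the foliation shows that each $\Sigma_t$ is totally geodesic, $\rho_t$ is constant on each leaf, and $R^M \equiv 2c$ on the tubular neighborhood, delivering the product isometry $(-\delta,\delta) \times \Sigma$ with metric $dt^2 + h$ on a round sphere of radius $1/\sqrt{c}$.

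For the global statement, with $M$ complete and $\Sigma$ area-minimizing in its isotopy class, any leaf $\Sigma_t$ of the local foliation is isotopic to $\Sigma$ and shares its area, so it is again area-minimizing in its isotopy class. The set of points admitting such a local product structure is therefore open and closed in the connected component of $\Sigma$; lifting to the universal cover $\tilde M$ and iterating the splitting produces an isometric map $(\mathbb{R} \times \Sigma, dt^2 + h) \to \tilde M$ which is open and closed in its image, hence surjective. The main obstacle will be the CMC foliation step: extending the sharp bound $A \leq 4\pi/c$ from the minimal case to nearby CMC leaves requires careful handling of the positive terms arising from $H(t) \neq 0$ in the second variation, and combining a Kato-type estimate with Gauss--Bonnet in a way that still produces a strict inequality unless the splitting occurs.
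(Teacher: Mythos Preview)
The paper does not prove this theorem; it is quoted in the introduction as a known result of Bray, Brendle, and Neves, cited as~\cite{BrayBrendleNeves2010}, and serves only as historical and motivational background for the paper's own Theorems~A and~B. There is therefore no ``paper's own proof'' to compare your proposal against.

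That said, your outline follows the original Bray--Brendle--Neves strategy, and Step~1 (stability with $\varphi\equiv 1$, traced Gauss equation, Gauss--Bonnet) is correct. In Step~2, however, the mechanism you describe is inverted relative to what actually works. You propose to prove $A(t)\le 4\pi/c$ for each CMC leaf and then combine with the area lower bound $A(t)\ge A(0)$; but nearby CMC leaves are not minimal and need not be stable, so the Step~1 estimate does not carry over to them, and there is no direct route to $A(t)\le 4\pi/c$. The Bray--Brendle--Neves argument runs the other way: from the evolution equation $H'(t)\rho_t = -\Delta\rho_t - (|A_t|^2+\mathrm{Ric}^M(\nu_t,\nu_t))\rho_t$ one divides by $\rho_t$, integrates, and uses the Gauss equation and Gauss--Bonnet together with $R^M\ge 2c$ and the \emph{lower} bound $A(t)\ge A(0)=4\pi/c$ (from local area minimization) to obtain
\[
H'(t)\int_{\Sigma_t}\frac{1}{\rho_t}\,dA_t \;\le\; 4\pi - cA(t) - \tfrac12 H(t)^2 A(t) \;\le\; 0.
\]
Since $H(0)=0$, this forces $H(t)\le 0$ for $t>0$ and $H(t)\ge 0$ for $t<0$; then $A'(t)=H(t)\int_{\Sigma_t}\rho_t$ shows $A$ has a maximum at $t=0$, contradicting the minimum unless $A$ is constant, $H\equiv 0$, and all inequalities are equalities. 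This is the ``obstacle'' you flagged, and it is resolved by feeding the area \emph{lower} bound into a differential inequality for $H$, not by extending the area \emph{upper} bound to non-minimal leaves.
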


The next theorem, due to I.~Nunes~\cite{Nunes2013}, extends the result of Bray, Brendle, and Neves to the context of closed surfaces \(\Sigma\), with genus \(\g(\Sigma) \geq 2\), embedded in a Riemannian manifold with scalar curvature bounded below by a negative constant.

\begin{theointro}[Nunes, 2013]\label{theo:Nunes}
Let \((M^3, g)\) be a Riemannian manifold with scalar curvature \(R^M \ge -2c\), for some constant \(c > 0\). If \(\Sigma^2 \subset M^3\) is a two-sided, embedded, closed Riemann surface with genus \(\g(\Sigma) \geq 2\) that locally minimizes area, then the area of \(\Sigma\) satisfies
$$
A(\Sigma) \geq \frac{4 \pi (\g(\Sigma) - 1)}{c}.
$$
Moreover, if equality holds, then there exists a neighborhood \(V\) of \(\Sigma\) in \(M\) such that \((V, g|_V)\) is isometric to \((- \delta, \delta) \times \Sigma\) with the product metric \(dt^2 + h\), where \(h = g|_\Sigma\) and \((\Sigma, h)\) has constant Gaussian curvature equal to \(-c\). In particular, if \(M\) is complete and \(\Sigma\) minimizes area in its isotopy class, then the universal cover of \(M\) is isometric to the product \((\mathbb{R} \times \Sigma, dt^2 + h)\), assuming equality holds.
\end{theointro}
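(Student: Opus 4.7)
\medskip

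\noindent\textbf{Proof plan.} The argument splits naturally into three parts: the area bound, the infinitesimal rigidity at equality, and the propagation of rigidity to a metric splitting.

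First, since $\Sigma$ locally minimizes area it is, in particular, a two-sided stable minimal surface, so the stability inequality
\[
\int_\Sigma\bigl(|A|^2+\mathrm{Ric}^M(N,N)\bigr)\phi^2\,dA\;\le\;\int_\Sigma|\nabla\phi|^2\,dA
\]
holds for every $\phi\in C^\infty(\Sigma)$. I would take $\phi\equiv 1$ and rewrite the integrand using the traced Gauss equation in dimension three, which for a minimal hypersurface reads $|A|^2+\mathrm{Ric}^M(N,N)=\tfrac12 R^M-K_\Sigma+\tfrac12|A|^2$. Combining the hypothesis $R^M\ge -2c$ with Gauss-Bonnet, $\int_\Sigma K_\Sigma\,dA=2\pi\chi(\Sigma)=-4\pi(\g(\Sigma)-1)$, then yields both the stated area inequality and, in the equality case, the pointwise rigidity $|A|\equiv 0$ (so $\Sigma$ is totally geodesic), $R^M\big|_\Sigma\equiv -2c$, and $K_\Sigma\equiv -c$.

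To produce the local splitting I would follow the Bray-Brendle-Neves strategy: apply the implicit function theorem to the mean-curvature operator on normal graphs $\Sigma_u=\{\exp_x(u(x)N(x)):x\in\Sigma\}$ to construct, for each small $t$, a surface $\Sigma_t$ of constant mean curvature $H(t)$ with $\Sigma_0=\Sigma$ and $\int_\Sigma u_t\,dA=t$. The linearization is the Jacobi operator $L=-\Delta-c$ (since $|A|^2\equiv 0$ and, by the computation above, $\mathrm{Ric}^M(N,N)\equiv 0$ along $\Sigma$), whose kernel consists only of constants; the integral constraint compensates exactly for this kernel and makes the IFT applicable. Writing $a(t)=A(\Sigma_t)$ with positive lapse $\rho_t$, the first variation gives $a'(t)=-H(t)\int_{\Sigma_t}\rho_t\,dA_t$, while the local area-minimizing hypothesis yields $a(t)\ge a(0)$. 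Applying the leaf-by-leaf Gauss-Bonnet identity together with $R^M\ge -2c$ and the constancy of $H(t)$ on each $\Sigma_t$ produces, after an ODE argument, the reverse inequality $a(t)\le a(0)$, whence $a(t)\equiv a(0)$, $H(t)\equiv 0$, and every $\Sigma_t$ saturates the area inequality. Each leaf is therefore totally geodesic with $K_{\Sigma_t}\equiv -c$; a strong maximum-principle argument for $L$ applied to $\rho_t$ then forces the lapse to be constant, giving the isometry of a neighborhood $V$ of $\Sigma$ with $((-\delta,\delta)\times\Sigma,dt^2+h)$.

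For the universal-cover statement I would iterate the construction: under the global hypothesis each leaf of the local foliation is again closed, two-sided, of genus $\g(\Sigma)\ge 2$, isotopic to $\Sigma$, and has area $A(\Sigma)$, hence locally minimizes area in its isotopy class in $M$. Repeating the local splitting about every leaf and applying an open-and-closed continuation argument on the maximal $t$-interval extends the foliation globally; passing to $\widetilde M$ removes any obstruction due to $\Sigma$ being non-separating and yields the product structure $\widetilde M\cong(\mathbb R\times\Sigma,dt^2+h)$.

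The step I expect to be the main technical obstacle is the CMC-foliation half of Step 2, specifically extracting $H(t)\equiv 0$ from the area-minimizing property: one must delicately combine the first variation formula with the leaf-by-leaf application of the scalar curvature identity and Gauss-Bonnet, and then invoke the maximum principle for $L=-\Delta-c$ to kill the non-constant modes and conclude the exact rigidity of the ambient metric. The global patching in Step 3 is secondary but also needs care to ensure that the local splittings are compatible across leaves.
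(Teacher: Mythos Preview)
The paper does not actually contain a proof of this theorem: it is quoted from Nunes~\cite{Nunes2013} as historical background in the introduction, and the paper's own contributions (Theorems~\ref{theo:A} and~\ref{theo:B}) concern free boundary hypersurfaces and MOTS rather than closed surfaces in three-manifolds. There is therefore no in-paper proof to compare your proposal against.

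Your outline is nonetheless the standard Nunes/Bray--Brendle--Neves argument and is essentially sound, with one slip. From $|A|^2\equiv 0$ and $\mathrm{Ric}^M(N,N)\equiv 0$ on $\Sigma$ the Jacobi operator is $L=-\Delta-(|A|^2+\mathrm{Ric}^M(N,N))=-\Delta$, not $-\Delta-c$; note that if it really were $-\Delta-c$ with $c>0$, constants would \emph{not} lie in its kernel, and your own justification of the kernel claim would collapse. Fortunately the correct operator $-\Delta$ does have kernel exactly the constants, so the implicit function theorem step producing the CMC foliation goes through unchanged once you fix this. The step you rightly flag as the main technical obstacle---extracting $H(t)\equiv 0$ from the leafwise Gauss--Bonnet identity combined with $R^M\ge -2c$ and an ODE comparison---is precisely where the negative-curvature case diverges from Bray--Brendle--Neves and is what Micallef and Moraru~\cite{MicallefMoraru2015} later streamlined; the paper's Lemma~\ref{lem:Mendes4} is in fact a direct descendant of that technique, transplanted to the MOTS setting.
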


It is worth noting that the works of Cai~\cite{Cai2002}, Bray-Brendle-Neves~\cite{BrayBrendleNeves2010}, and Nunes~\cite{Nunes2013} were, in part, inspired by the pioneering work of Cai and Galloway~\cite{CaiGalloway2000}, in which, motivated by questions related to the topology of black holes, they solved a problem posed by\linebreak  D.~Fischer-Colbrie and R.~Schoen~\cite{Fischer-ColbrieSchoen1980}. 
Among other things, they proved the rigidity result that we paraphrase as follows: if \((M^3, g)\) is a Riemannian manifold with non-negative scalar curvature and \(\Sigma^2 \subset M^3\) is a two-sided embedded torus that locally minimizes area, then \((M,g)\) is isometric to \((- \delta, \delta) \times \Sigma\) with the product metric \(dt^2 + h\) in a neighborhood of \(\Sigma\), where \(h = g|_\Sigma\) and \((\Sigma, h)\) is a flat torus (\emph{i.e.}\ \(M\) is flat in a neighborhood of \(\Sigma\)). Moreover, if \(M\) is complete and \(\Sigma\) minimizes area in its isotopy class, then the universal cover of \(M\) is isometric to \((\mathbb{R} \times \Sigma, dt^2 + h)\) (\emph{i.e.}\ \(M\) is globally flat).

Despite the similarities between the results due to Bray-Brendle-Neves, Nunes, and \linebreak Cai-Galloway, 
their proofs are quite different. However, in 2015, M.~Micallef and V.~Moraru, in~\cite{MicallefMoraru2015}, presented a unified proof for these important results. This allowed, one year later, V.~Moraru~\cite{Moraru2016} to extend Nunes' theorem to the case of closed hypersurfaces \(\Sigma\) of dimension \(n \geq 3\).

\begin{theointro}[Moraru, 2016]\label{theo:Moraru}
Let $(M^{n+1}, g)$, with $n \geq 3$, be a Riemannian manifold with scalar curvature $R^M \geq -2c$, for some constant $c > 0$. Let $\Sigma^n \subset M^{n+1}$ be a two-sided, closed embedded hypersurface with $\sigma(\Sigma) < 0$ that locally minimizes volume. Then the volume of $\Sigma$ satisfies
$$
\operatorname{vol}(\Sigma) \geq \left(\frac{|\sigma(\Sigma)|}{2c}\right)^{\frac{n}{2}}.
$$
Moreover, if equality holds, then there exists a neighborhood $V$ of $\Sigma$ in $M$ such that $(V, g|_V)$ is isometric to $(-\delta, \delta) \times \Sigma$ with the product metric $dt^2 + h$, where $h = g|_\Sigma$ and $(\Sigma, h)$ is Einstein with scalar curvature $R^\Sigma = -2c$. 
\end{theointro}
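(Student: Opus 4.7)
The plan is to combine the stability inequality for $\Sigma$ with the variational characterization of the Yamabe invariant, in the spirit of the unified approach of Micallef--Moraru~\cite{MicallefMoraru2015}. Since $\Sigma$ locally minimizes volume, it is a two-sided stable minimal hypersurface, and for every $u \in C^\infty(\Sigma)$ one has
\begin{equation*}
\int_\Sigma |\nabla u|^2 \, dV_h \;\geq\; \int_\Sigma \bigl(\operatorname{Ric}^M(\nu,\nu) + |A|^2\bigr) u^2 \, dV_h,
\end{equation*}
where $h = g|_\Sigma$, $\nu$ is a unit normal, and $A$ is the second fundamental form. The traced Gauss equation combined with $H^\Sigma = 0$ gives $\operatorname{Ric}^M(\nu,\nu)+|A|^2 = \tfrac{1}{2}(R^M - R^\Sigma + |A|^2)$, and inserting $R^M \geq -2c$ yields
\begin{equation*}
2 \int_\Sigma |\nabla u|^2 \, dV_h + \int_\Sigma R^\Sigma u^2 \, dV_h \;\geq\; -2c \int_\Sigma u^2 \, dV_h.
\end{equation*}

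The next ingredient is the elementary observation that $\frac{4(n-1)}{n-2} > 2$ for every $n \geq 3$, so that the left-hand side above is dominated by the quadratic form of the conformal Laplacian on $(\Sigma, h)$. Combined with H\"older's inequality $\int_\Sigma u^2 \leq |\Sigma|^{2/n}\bigl( \int_\Sigma u^{2n/(n-2)} \bigr)^{(n-2)/n}$, this produces
\begin{equation*}
\int_\Sigma \left(\tfrac{4(n-1)}{n-2}|\nabla u|^2 + R^\Sigma u^2\right) dV_h \;\geq\; -2c\, |\Sigma|^{2/n} \left(\int_\Sigma u^{\frac{2n}{n-2}} dV_h\right)^{\frac{n-2}{n}}.
\end{equation*}
Dividing by $\bigl(\int_\Sigma u^{2n/(n-2)}\bigr)^{(n-2)/n}$ and taking the infimum over positive $u$ gives the Yamabe lower bound $Y([h]) \geq -2c\,|\Sigma|^{2/n}$; since $\sigma(\Sigma) \geq Y([h])$ by definition and $\sigma(\Sigma) < 0$, this is exactly the desired volume estimate after rearrangement.

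For the rigidity assertion, I would trace the inequality chain backward, so that equality forces equality at each step. The strict inequality $\frac{4(n-1)}{n-2} > 2$ compels the Yamabe minimizer to be a constant function, so $h$ itself is a Yamabe metric in its conformal class with constant scalar curvature $R^\Sigma = -2c$; saturating the stability step with $u$ constant then forces both $|A| \equiv 0$ and $R^M \equiv -2c$ along $\Sigma$; and the identity $Y([h]) = \sigma(\Sigma)$ says that $h$ realizes the Yamabe invariant of $\Sigma$, which for $\sigma(\Sigma) < 0$ is known to compel $(\Sigma,h)$ to be Einstein with $\operatorname{Ric}^h = -\tfrac{2c}{n}h$.

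To pass from this infinitesimal rigidity at $\Sigma$ to the product structure on a neighborhood, I would construct a local foliation $\{\Sigma_t\}_{|t|<\delta}$ by hypersurfaces of constant mean curvature $\lambda(t)$, with $\Sigma_0 = \Sigma$ and $\lambda(0) = 0$, via the implicit function theorem applied to the linearized mean curvature operator at the stable minimal surface $\Sigma$. Because each $\Sigma_t$ is diffeomorphic to $\Sigma$ it inherits the Yamabe invariant $\sigma(\Sigma)$, while volume minimization of $\Sigma$ forces $|\Sigma_t| \geq |\Sigma| = (|\sigma(\Sigma)|/2c)^{n/2}$. Running the inequality along the foliation (with a CMC-adapted version of the initial stability argument) forces $\lambda(t) \equiv 0$ and $|\Sigma_t|$ constant in $t$; the equality analysis then applies leafwise, giving each $\Sigma_t$ totally geodesic and Einstein, hence $\partial_t h_t \equiv 0$, and the neighborhood is isometric to $((-\delta,\delta)\times\Sigma, dt^2 + h)$. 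The main obstacle, in my view, is precisely this last propagation step: translating the pointwise rigidity at $t=0$ into a foliation-wide statement requires care in extending the stability/Yamabe inequality to the non-minimal CMC leaves and in identifying the right variational quantity that remains optimized across $t$.
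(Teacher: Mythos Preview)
This statement is quoted in the paper as a known result of Moraru~\cite{Moraru2016} and is not proved there; the paper's own contribution is the free boundary MOTS generalization, Theorem~\ref{theo:B}, whose proof in Section~\ref{sec:proof.Theo.B} specializes (taking $K=0$ and $\partial M=\emptyset$) to Moraru's argument. I will therefore compare your proposal against that template.

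Your volume estimate and your infinitesimal rigidity analysis are correct and match the paper's Propositions~\ref{prop:aux1} and~\ref{prop:aux2} step for step in the specialized setting. Your CMC foliation via the implicit function theorem is also the right construction, corresponding to Lemma~\ref{lem:aux2}.

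The gap is in the propagation step, and it is more than a matter of ``care'': your plan to ``run the inequality along the foliation with a CMC-adapted version of the initial stability argument'' does not work as stated, because the stability inequality you invoked at $t=0$ is simply unavailable on the leaves $\Sigma_t$ for $t\neq 0$---those are CMC but there is no reason for them to be stable. What the paper (and Moraru) actually do is bypass stability of the individual leaves entirely and work instead from the \emph{evolution equation} $\tfrac{dH}{dt} = L_t\varphi$ along the foliation (equation~\eqref{eq:variacaotheta} with $K=0$). Integrating this against the Yamabe minimizer $u_t$ on each leaf, combining with the first variation of volume, and repeating the H\"older/Yamabe manipulations you already carried out at $t=0$, one arrives at a differential--integral inequality of the shape
\[
H'(t)\,\eta(t) \;\leq\; \int_0^t H(s)\,\xi(s)\,ds,\qquad \eta>0,\ \xi\geq 0,\ H(0)=0,
\]
and a short calculus lemma (Lemma~\ref{lem:Mendes4}, taken from~\cite{Mendes2019TAMS} following~\cite{MicallefMoraru2015,Moraru2016}) forces $H(t)\leq 0$ on $[0,\delta)$. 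Combined with local volume minimization---which, via the first variation $\tfrac{d}{dt}|\Sigma_t| = H(t)\int_{\Sigma_t}\varphi$, prevents $|\Sigma_t|$ from dropping below $|\Sigma_0|$---this yields $H\equiv 0$ and hence equality at every step of the chain, giving the leafwise rigidity and the product structure. So your diagnosis of where the obstacle lies is right, but the resolution is ``evolution equation plus ODE comparison,'' not ``stability of each leaf.''
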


Above, \(\sigma(\Sigma)\) represents the Yamabe invariant of \(\Sigma\) (see Subsection~\ref{subsec:preliminares2}).

The case of closed hypersurfaces \(\Sigma\) of dimension \(n \geq 2\) with \(\sigma(\Sigma) \leq 0\) (\emph{i.e.}\ those that do not admit a metric of positive scalar curvature), in Riemannian manifolds with non-negative scalar curvature, corresponds to Theorem~\ref{theo:Cai}. For hypersurfaces of dimension \(n \geq 3\) with \(\sigma(\Sigma) > 0\) in Riemannian manifolds with positive scalar curvature, only a few special cases have been addressed so far: A.~Barros \emph{et al.}~\cite{BarrosCruzBatistaSousa2015} studied the case of hypersurfaces \(\Sigma\) of dimension \(n = 4\), assuming that \((\Sigma, h)\) is Einstein; the second-named author~\cite{Mendes2019} extended the work of A.~Barros \emph{et al.}\ to more general (not necessarily Einstein) hypersurfaces of dimension \(n = 4\); and H.~Deng~\cite{Deng2021} studied the case when \(n > 4\) and \((\Sigma, h)\) is Einstein.

In 2020, A.~Barros and C.~Cruz~\cite{BarrosCruz2020} extended Theorem~\ref{theo:Moraru} to the setting of compact free boundary hypersurfaces in Riemannian manifolds with boundary. The result is as follows:

\begin{theointro}[Barros-Cruz, 2020]\label{theo:Barrosetal} 
Let $(M^{n+1}, g)$, with $n \geq 3$, be a Riemannian manifold with scalar curvature $R^M$ bounded below and mean convex boundary $\partial M$, \emph{i.e.}\ $H^{\partial M} \geq 0$. Consider a compact, two-sided, properly embedded hypersurface $\Sigma^n \subset M^{n+1}$ that is free boundary and locally minimizes volume.
\begin{enumerate}
\item[I)] If $\inf R^M < 0$ and $\sigma^{1,0}(\Sigma, \partial \Sigma) < 0$, then
$$
\operatorname{vol}(\Sigma) \geq \left(\frac{\sigma^{1,0}(\Sigma, \partial \Sigma)}{\inf R^M}\right)^{\frac{n}{2}}.
$$
Furthermore, if equality holds, then there exists a neighborhood $V$ of $\Sigma$ in $M$ such that $(V, g|_V)$ is isometric to $(-\delta, \delta) \times \Sigma$ with the product metric $dt^2 + h$, where $h = g|_\Sigma$ and $(\Sigma, h)$ is Einstein with scalar curvature $R^\Sigma = \inf R^M$ and a totally geodesic boundary.
\item[II)] If $R^M \geq 0$ and $\sigma^{1,0}(\Sigma, \partial \Sigma) \leq 0$, then there exists a neighborhood $V$ of $\Sigma$ in $M$ such that $(V, g|_V)$ is isometric to $(-\delta, \delta) \times \Sigma$ with the product metric $dt^2 + h$, where $h = g|_\Sigma$ and $(\Sigma, h)$ is Ricci-flat with a totally geodesic boundary.
\end{enumerate}
\end{theointro}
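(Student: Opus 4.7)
My plan is to port the Micallef--Moraru strategy behind Theorem~\ref{theo:Moraru} to the free boundary setting. Since $\Sigma$ is free boundary and locally minimizes volume, it is a stable free boundary minimal hypersurface, so for every $\phi\in C^\infty(\Sigma)$,
\begin{equation*}
\int_\Sigma \bigl(|\nabla\phi|^2 - (|A|^2+\mathrm{Ric}^M(\nu,\nu))\phi^2\bigr)\,dA \;\geq\; \int_{\partial\Sigma}\II^{\partial M}(\nu,\nu)\,\phi^2\,ds.
\end{equation*}
Using $H^\Sigma=0$ and the Gauss equation to rewrite $|A|^2+\mathrm{Ric}^M(\nu,\nu)=\tfrac12(R^M-R^\Sigma+3|A|^2)$, together with the identity $\II^{\partial M}(\nu,\nu)=H^{\partial M}-H^{\partial\Sigma}_\Sigma$ (a consequence of the free boundary condition $\Sigma\perp\partial M$), the stability inequality becomes
\begin{equation*}
\int_\Sigma\bigl(2|\nabla\phi|^2+R^\Sigma\phi^2\bigr)dA + 2\int_{\partial\Sigma}H^{\partial\Sigma}_\Sigma\,\phi^2\,ds \;\geq\; \int_\Sigma\bigl(R^M + 3|A|^2\bigr)\phi^2\,dA + 2\int_{\partial\Sigma} H^{\partial M}\,\phi^2\,ds.
\end{equation*}
The hypotheses $R^M\geq\inf R^M$, $H^{\partial M}\geq 0$, and $|A|^2\geq 0$ make the right-hand side at least $\inf R^M\int_\Sigma \phi^2\,dA$ plus a nonnegative remainder.

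Next I would bring in the Escobar-type Yamabe invariant $\sigma^{1,0}(\Sigma,\partial\Sigma)$, which is the supremum over conformal classes of the Yamabe constant $Y^{1,0}([h'])$ defined by the Escobar quadratic form subject to the minimal-boundary condition. Choosing $\phi=u$ related to a Yamabe minimizer of the induced class $[h]$ --- so that $\tilde h := u^{4/(n-2)}h$ has constant scalar curvature and minimal boundary, and the Escobar conformal identity converts the quadratic-form part of the stability inequality into $Y^{1,0}([h])\cdot(\int u^{2n/(n-2)}\,dA)^{(n-2)/n}$ --- and pairing with Hölder's inequality $\int u^2\,dA\leq \mathrm{vol}(\Sigma)^{2/n}(\int u^{2n/(n-2)}\,dA)^{(n-2)/n}$, I would obtain
\begin{equation*}
\sigma^{1,0}(\Sigma,\partial\Sigma) \;\geq\; Y^{1,0}([h]) \;\geq\; \inf R^M\cdot\mathrm{vol}(\Sigma)^{2/n}.
\end{equation*}
In Part~I (where $\inf R^M<0$ and $\sigma^{1,0}<0$), dividing by the negative $\inf R^M$ flips the inequality to yield $\mathrm{vol}(\Sigma)\geq(\sigma^{1,0}(\Sigma,\partial\Sigma)/\inf R^M)^{n/2}$, as desired. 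In Part~II (where $R^M\geq 0$ and $\sigma^{1,0}\leq 0$), the inequality forces $\sigma^{1,0}\geq 0$, hence $\sigma^{1,0}=0$ and equality throughout.

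In the equality cases of both parts, every intermediate inequality is saturated, which forces $|A|^2\equiv 0$ on $\Sigma$ (so $\Sigma$ is totally geodesic), $R^M|_\Sigma\equiv\inf R^M$ (resp.\ $\equiv 0$), $H^{\partial M}\equiv 0$ and $\II^{\partial M}(\nu,\nu)\equiv 0$ along $\partial\Sigma$, $u$ constant (so that $h$ itself realizes the Yamabe constant, with constant scalar curvature and minimal boundary), and $[h]$ a Yamabe-extremal class. To upgrade these first-order identities into a splitting of a two-sided neighborhood, and to extract the Einstein (resp.\ Ricci-flat) property of $(\Sigma,h)$, I would implement the free boundary CMC foliation argument: via an implicit function theorem applied to the free boundary mean curvature operator acting on normal graphs $\{\exp_x(t(x)\nu(x))\}$, with a Robin-type boundary condition enforcing orthogonality to $\partial M$, one constructs for each small $s\in(-\delta,\delta)$ a unique free boundary hypersurface $\Sigma_s$ of constant mean curvature $\lambda(s)$ with $\Sigma_0=\Sigma$; the local volume minimality of $\Sigma$, combined with the maximum principle applied to $\lambda$, then forces $\lambda(s)\equiv 0$ and each $\Sigma_s$ to be totally geodesic and isometric to $\Sigma$, producing the product metric $dt^2+h$. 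The main obstacle, and the technical heart of the argument, lies in the boundary: ensuring the CMC deformations remain free boundary requires the nondegeneracy of a Robin boundary value problem on $\Sigma$, whose solvability depends crucially on the infinitesimal identities $\II^{\partial M}(\nu,\nu)\equiv 0$ and $H^{\partial M}\equiv 0$ along $\partial\Sigma$ established above.
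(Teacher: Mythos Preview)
This theorem is quoted from \cite{BarrosCruz2020} as background and is \emph{not} proved in the present paper; the paper's own Theorem~\ref{theo:B} generalizes Part~I to free boundary MOTS, and its proof (Propositions~\ref{prop:aux1}--\ref{prop:aux2} together with the argument in Section~\ref{sec:proof.Theo.B}) is the closest in-paper analogue. Your outline is the Micallef--Moraru/Barros--Cruz strategy and, for the inequality and infinitesimal rigidity, matches Proposition~\ref{prop:aux1} and Proposition~\ref{prop:aux2} almost verbatim (same $H^{\partial M}=H^{\partial\Sigma}+\II^{\partial M}(N,N)$ identity, same H\"older step, same Yamabe-minimizer test function).

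Two substantive gaps in your rigidity sketch for Part~I. First, the phrase ``local volume minimality combined with the maximum principle applied to $\lambda$ forces $\lambda(s)\equiv 0$'' glosses over the actual mechanism, which does \emph{not} reduce to a bare maximum principle when $\inf R^M<0$: along the CMC free boundary foliation one feeds the stability/Yamabe computation back into the evolution of $H^{\Sigma_t}$ to obtain an integral--differential inequality of the form $f'(t)\eta(t)-f(t)\rho(t)\le\int_0^t f(s)\xi(s)\,ds$ with $f(t)=H^{\Sigma_t}$, and it is the ODE-type Lemma~\ref{lem:Mendes4} (together with the volume comparison $\operatorname{vol}(\Sigma_t)\ge\operatorname{vol}(\Sigma_0)$) that forces $f\equiv 0$. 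Second, the Einstein conclusion does not drop out of ``$h$ realizes the Yamabe constant'': equality in your chain gives $Q_h^{1,0}(\Sigma,\partial\Sigma)=\sigma^{1,0}(\Sigma,\partial\Sigma)$, and one then needs the separate fact (Proposition~\ref{prop:CruzSantos}, due to Cruz--Santos) that a Yamabe metric in a class realizing a negative $\sigma^{1,0}$ is Einstein with totally geodesic boundary. A minor slip: the Gauss identity gives $|A|^2+\mathrm{Ric}^M(\nu,\nu)=\tfrac12(R^M-R^\Sigma+|A|^2)$, not $+3|A|^2$; this does not affect the direction of any inequality. For Part~II the simpler foliation route you describe is adequate.
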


Above, $\sigma^{1,0}(\Sigma, \partial \Sigma)$ denotes the Yamabe invariant of the compact manifold $\Sigma$ with boundary (see Subsection~\ref{subsec:preliminares2}).

Recently, L.~F.~Pessoa, E.~Véras, and B.~Vieira~\cite{PessoaVerasVieira} extended the results of~\cite{BarrosCruz2020} to the setting of capillary constant mean curvature hypersurfaces.

In the second part of this work, we extend Theorem~\ref{theo:Barrosetal}, item I), to the context of free boundary marginally outer trapped surfaces (MOTS).

Over the years, the study of MOTS, motivated by the theory of relativity, has garnered significant attention from the mathematical academic community, particularly in the field of differential geometry. Within this context, and concerning rigidity results, we present the following theorem due to the second-named author~\cite{Mendes2019TAMS} (definitions will be provided in Subsection~\ref{subsec:preliminares1}):

\begin{theointro}[Mendes, 2019]\label{theo:Mendes1}
Let $(M^3,g,K)$ be an initial data set, and let $\Sigma^2$ be a closed, weakly outermost MOTS in $(M^3,g,K)$ with genus $\g(\Sigma) \geq 2$. If $\mu - |J| \geq -c$ for some constant $c > 0$ and $K$ is 2-convex on $M_+$, then the area of $\Sigma$ satisfies
$$
A(\Sigma) \geq \frac{4\pi(\g(\Sigma) - 1)}{c}.
$$
Moreover, if equality holds, then:
\begin{enumerate}
\item[$(1)$] There exists an outer neighborhood $V$ of $\Sigma$ in $M$ such that $(V, g|_V)$ is isometric to $[0, \delta) \times \Sigma$ with the product metric $dt^2 + h$, where $h = g|_\Sigma$ and $(\Sigma, h)$ has constant Gaussian curvature equal to $-c$;
\item[$(2)$] $K = a \, dt^2$ on $V$, where $a \in C^\infty(V)$ depends only on $t \in [0, \delta)$;
\item[$(3)$] $\mu = -c$ and $J = 0$ on $V$.
\end{enumerate}
\end{theointro}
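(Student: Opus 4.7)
The plan is to follow the two-step strategy standard for MOTS rigidity theorems: first establish the area lower bound via the MOTS stability of $\Sigma$ and Gauss-Bonnet, then upgrade the resulting infinitesimal rigidity to a local splitting by constructing a foliation of constant-$\theta_+$ leaves.

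For the area estimate, I would use that $\Sigma$ being weakly outermost implies $\lambda_1(L)\ge 0$ for the MOTS stability operator $L$, and hence $\lambda_1(L^{\mathrm{sym}})\ge 0$ for the symmetrized operator
$$L^{\mathrm{sym}}u=-\Delta_\Sigma u+\Bigl(\tfrac{1}{2}R^\Sigma-(\mu+J(\nu))-\tfrac{1}{2}|\chi|^2\Bigr)u,\qquad \chi:=A_\Sigma+K|_\Sigma.$$
Letting $\phi>0$ be a principal eigenfunction, dividing $L^{\mathrm{sym}}\phi=\lambda_1\phi$ by $\phi$, integrating on $\Sigma$, and using the identity $\int_\Sigma\phi^{-1}\Delta\phi\,dA=\int_\Sigma|\nabla\log\phi|^2\,dA\ge 0$ yields
$$\int_\Sigma\Bigl(\tfrac{1}{2}R^\Sigma-(\mu+J(\nu))-\tfrac{1}{2}|\chi|^2\Bigr)\,dA\ge \lambda_1 A(\Sigma)\ge 0.$$
Combined with $\mu+J(\nu)\ge \mu-|J|\ge -c$ and Gauss-Bonnet $\int_\Sigma R^\Sigma\,dA=8\pi(1-\g(\Sigma))$, this gives $A(\Sigma)\ge 4\pi(\g(\Sigma)-1)/c$. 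In the equality case every inequality saturates, so $\lambda_1=0$, $\phi$ is a positive constant, and one has the pointwise identities $|\chi|^2\equiv 0$, $\mu+J(\nu)\equiv -c$, and $R^\Sigma\equiv -2c$ on $\Sigma$.

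To promote this infinitesimal rigidity to a neighborhood, I would construct a local outer foliation $\{\Sigma_t\}_{t\in[0,\delta)}$ of an outer neighborhood $V$ of $\Sigma$, with $\Sigma_0=\Sigma$, unit lapse, and $\theta_+(\Sigma_t)\equiv c_t$ constant, via the standard implicit function theorem argument that accommodates the one-dimensional kernel of $L^{\mathrm{sym}}$ spanned by constants. The weakly outermost hypothesis forces $c_t\ge 0$, while differentiating at $t=0$ via $\partial_t\theta_+=L(1)$ together with the pointwise rigidity $Q\equiv 0$ on $\Sigma$ gives $c_t'(0)\le 0$; a bootstrap argument then yields $c_t\equiv 0$, so every leaf $\Sigma_t$ is a MOTS. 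Since each $\Sigma_t$ is also weakly outermost inside $V$ (no nearby leaf has $\theta_+<0$), the first step applies to it and gives $A(\Sigma_t)\ge A(\Sigma)$. At the same time, the 2-convexity of $K$ on $M_+$ yields $H_t=-\mathrm{tr}_{\Sigma_t}K\le 0$, so $A(\Sigma_t)$ is non-increasing in $t$. Combining these forces $A(\Sigma_t)\equiv A(\Sigma)$, so equality in Step 1 holds on every leaf: $\chi_t\equiv 0$, $\mu+J(\nu_t)\equiv -c$, $R^{\Sigma_t}\equiv -2c$, and $H_t\equiv 0$. Then $\mathrm{tr}_{\Sigma_t}K\equiv 0$, and combined with 2-convexity a pointwise algebraic argument forces $K|_{\Sigma_t}\equiv 0$. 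With unit lapse, $g|_V=dt^2+h_t$ and $\partial_t h_t=2A_{\Sigma_t}=-2K|_{\Sigma_t}\equiv 0$, giving the product metric with $(\Sigma,h)$ of constant Gaussian curvature $-c$. Finally, $K=a\,dt^2$ follows, and the DEC saturation $\mu+J(\nu_t)\equiv -c$ together with $\mu-|J|\ge -c$ gives $\mu\equiv -c$ and $J\equiv 0$; that $a$ depends only on $t$ then follows from the momentum constraint $J=\mathrm{div}\,K-d(\mathrm{tr}\,K)$.

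The main obstacle is the foliation step: establishing $c_t\equiv 0$, transferring the infinitesimal rigidity to every leaf, and extracting $K|_{\Sigma_t}\equiv 0$ from the 2-convexity assumption. This is precisely where the argument departs from its purely Riemannian analogues (Nunes, Moraru), and where the hypotheses on $K$ play their decisive role.
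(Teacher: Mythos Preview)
Your Step~1 (area estimate via symmetrized stability and Gauss--Bonnet, and the infinitesimal rigidity in the equality case) is correct and standard. The foliation step, however, has genuine gaps. First, you cannot impose both unit lapse and constant $\theta^+$ on each leaf: the IFT construction produces a lapse $\varphi$ which at $t=0$ is a principal eigenfunction of the full non-symmetric MOTS operator $L$, and nothing in Step~1 forces $X=0$ on $\Sigma$, so this eigenfunction need not be constant. Second, and more seriously, your ``bootstrap argument'' for $c_t\equiv 0$ is precisely the crux and is not supplied: knowing $c_0=0$, $c_t\ge 0$, and $c_0'\le 0$ yields only $c_0'=0$, and a function like $c_t=t^2$ is not excluded. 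The paper does not prove this statement directly (it is cited from Mendes~2019), but in the analogous proof of Theorem~\ref{theo:B} it derives, by combining the $\theta^+$-variation formula with the first variation of volume and the $n$-convexity of $K$, an integral differential inequality
\[
f'(t)\,\eta(t)-f(t)\,\rho(t)\le\int_0^t f(s)\,\xi(s)\,ds,\qquad f=\theta^+,
\]
and then invokes the ODE-type Lemma~\ref{lem:Mendes4} to conclude $f\le 0$; combined with weak outermostness this gives $\theta^+\equiv 0$. This lemma is exactly the missing bootstrap.

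Third, your claim that $\operatorname{tr}_{\Sigma_t}K=0$ together with $2$-convexity forces $K|_{\Sigma_t}=0$ by a pointwise algebraic argument is false: in an adapted orthonormal frame $\{e_1,e_2,N_t\}$, the tensor $K=\mathrm{diag}(1,-1,2)$ is $2$-convex (the two smallest eigenvalues sum to $0$) and satisfies $\operatorname{tr}_{\Sigma_t}K=0$, yet $K|_{\Sigma_t}=\mathrm{diag}(1,-1)\neq 0$. The route taken in the proof of Theorem~\ref{theo:B} (and in Mendes~2019) is instead to observe that $\theta^-(t)=\operatorname{tr}_{\Sigma_t}K-H^{\Sigma_t}=0$ for all $t$, apply the variation formula to $\theta^-$, and integrate the resulting identity over $\Sigma_t$ to obtain $\chi_t^-=0$, $|J|=0$, and $\nabla\varphi=0$ simultaneously. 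Together with $\chi_t^+=0$ this gives $K|_{\Sigma_t}=0$ and $A_t=0$, after which the product structure $g=dt^2+h$ and $K=a\,dt^2$ with $a=a(t)$ follow.
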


For hypersurfaces of dimension \( n \geq 3 \), he established the following result:

\begin{theointro}[Mendes, 2019]\label{theo:Mendes2}
Let $(M^{n+1},g,K)$, with \( n \geq 3 \), be an initial data set, and let $\Sigma^n$ be a closed, weakly outermost MOTS in $(M^{n+1},g,K)$ with Yamabe invariant $\sigma(\Sigma) < 0$. If $\mu - |J| \geq -c$ for some constant \( c > 0 \) and $K$ is $n$-convex on $M_+$, then the volume of $\Sigma$ satisfies
$$
\operatorname{vol}(\Sigma) \geq \left(\frac{|\sigma(\Sigma)|}{2c}\right)^{\frac{n}{2}}.
$$
Moreover, if equality holds, then:
\begin{enumerate}
\item[$(1)$] There exists an outer neighborhood $V$ of $\Sigma$ in $M$ such that $(V, g|_V)$ is isometric to $[0, \delta) \times \Sigma$ with the product metric $dt^2 + h$, where $h = g|_\Sigma$ and $(\Sigma, h)$ is Einstein with scalar curvature $R^\Sigma = -2c$;
\item[$(2)$] $K = a \, dt^2$ on $V$, where $a \in C^\infty(V)$ depends only on \( t \in [0, \delta) \);
\item[$(3)$] $\mu = -c$ and $J = 0$ on $V$.
\end{enumerate}
\end{theointro}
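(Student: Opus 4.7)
The plan is to adapt Moraru's~\cite{Moraru2016} argument (the closed-hypersurface precursor, Theorem~\ref{theo:Moraru}) to the MOTS setting by replacing the minimal-hypersurface stability inequality by the MOTS stability inequality and the scalar-curvature lower bound $R^M\ge-2c$ by the dominant-energy-condition $\mu-|J|\ge-c$. Since $\Sigma$ is weakly outermost, the principal eigenvalue of the symmetrised MOTS stability operator
\[
L^{\mathrm{sym}}\phi=-\Delta_\Sigma\phi+\Bigl(\tfrac12 R^\Sigma-(\mu+J(\nu))-\tfrac12|\chi|^2\Bigr)\phi
\]
is non-negative. Feeding any positive $\phi\in C^\infty(\Sigma)$ into the Rayleigh quotient and using $\mu+J(\nu)\ge\mu-|J|\ge-c$ together with $|\chi|^2\ge 0$ gives
\[
\int_\Sigma\bigl(2|\nabla\phi|^2+R^\Sigma\phi^2\bigr)\,dA\;\ge\;-2c\int_\Sigma\phi^2\,dA.
\]
Since $n\ge 3$ we have $a_n:=4(n-1)/(n-2)\ge 2$, so the inequality persists with $a_n$ in place of $2$; H\"older's inequality on the right-hand side and the definition of the Yamabe constant then produce $Y(\Sigma,[h])\ge-2c\,\mathrm{vol}(\Sigma)^{2/n}$. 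Combining this with the hypothesis $Y(\Sigma,[h])\le\sigma(\Sigma)<0$ yields the desired volume bound $\mathrm{vol}(\Sigma)\ge(|\sigma(\Sigma)|/(2c))^{n/2}$.

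In the equality case, saturation of each of the preceding inequalities forces $\mu\equiv-c$, $J(\nu)\equiv 0$, $|\chi|\equiv 0$ on $\Sigma$; the optimiser $\phi$ is constant; and $h=g|_\Sigma$ realises the Yamabe invariant of its conformal class. A standard Yamabe rigidity argument (cf.~\cite{Moraru2016}) then shows that $(\Sigma,h)$ is Einstein with scalar curvature $R^\Sigma=-2c$. To propagate these pointwise identities to an outer neighbourhood of $\Sigma$, I would invoke the infinitesimal rigidity of stable MOTS to produce a smooth outward foliation $\{\Sigma_t\}_{t\in[0,\delta)}$ with $\Sigma_0=\Sigma$, whose leaves have constant null expansion $\theta^+(\Sigma_t)$. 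The weakly outermost property forces $\theta^+(\Sigma_t)\le 0$, and applying the volume inequality above to each leaf (which has the same Yamabe invariant as $\Sigma$, being diffeomorphic to it) together with a standard first-variation-of-volume comparison forces $\theta^+(\Sigma_t)\equiv 0$ and the entire equality-case package on every $\Sigma_t$.

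After normalising the lapse of the foliation to $1$, which is possible because the first eigenfunction of $L^{\mathrm{sym}}$ is constant, the metric on the neighbourhood $V$ takes the form $dt^2+h_t$. Since $\chi_t\equiv 0$, one has $A_t=-K|_{\Sigma_t}$, so that $\partial_t h_t=-2A_t=2K|_{\Sigma_t}$. The concluding step is to combine the $n$-convexity of $K$, the leaf-wise MOTS identity $H_t+\mathrm{tr}_{\Sigma_t}K=0$, and $J\equiv 0$ to deduce $K|_{\Sigma_t}\equiv 0$ on $V$; this forces each leaf to be totally geodesic, so that $h_t\equiv h$ and the metric is the product $dt^2+h$, while leaving $K(\partial_t,\partial_t)=a(t)$ as the only nonzero component of $K$, which gives $K=a\,dt^2$. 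I expect this final step — exploiting the $n$-convexity hypothesis to upgrade the pointwise vanishing of $\chi$ on each leaf to the full normal form $K=a\,dt^2$ — to be the main technical obstacle; by contrast, the propagation of the equality case from a single weakly outermost MOTS to a full neighbourhood foliation is by now fairly standard in the MOTS literature.
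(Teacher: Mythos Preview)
Your overall strategy matches the paper's: use the symmetrised MOTS stability inequality together with $\mu+J(N)\ge\mu-|J|\ge-c$ and H\"older to derive the Yamabe/volume bound, then in the equality case build a constant-$\theta^+$ foliation and show each leaf saturates. However, two points in your propagation argument are inverted.

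First, you write that ``the weakly outermost property forces $\theta^+(\Sigma_t)\le 0$''. It is the other way around: weakly outermost rules out $\theta^+<0$ on any homologous hypersurface in $M_+$, so it yields $\theta^+(t)\ge 0$. The inequality $\theta^+(t)\le 0$ is precisely what has to be \emph{proved}, and in the paper this is done via a differential inequality for $\theta^+(t)$ obtained from the evolution formula, the Yamabe inequality on each leaf, and the first variation of volume, followed by an ODE-type comparison lemma (Lemma~\ref{lem:Mendes4}). This is the step where the $n$-convexity hypothesis is actually used: it gives $\operatorname{tr}_{\Sigma_t}K\ge 0$, hence $H^{\Sigma_t}\le\theta^+(t)$, which is what allows the volume derivative to be controlled by $\theta^+$. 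So your assessment of difficulty is inverted: the propagation step is the genuinely delicate part and is where $n$-convexity enters, not the final normal form for $K$.

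Second, the lapse is not normalised to $1$ merely because the first eigenfunction of $L^{\mathrm{sym}}$ on $\Sigma_0$ is constant. One must show that the lapse $\varphi$ is constant on \emph{each} leaf $\Sigma_t$; in the paper this comes from showing $Y=X-\varphi^{-1}\nabla\varphi\equiv 0$ via the saturated evolution equation. Once $\theta^+(t)\equiv 0$ is established, the paper then analyses the $\theta^-$ evolution (not $n$-convexity) to obtain $|J|=0$, $\chi^-_t=0$, and $\nabla\varphi=0$ on each leaf; combined with $\chi^+_t=0$ and $X_t=0$ this gives $K|_{\Sigma_t}=0$, $A_t=0$, and hence the product structure and $K=a(t)\,dt^2$.
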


Recently, A.~Alaee, M.~Lesourd, and S.-T.~Yau~\cite{AlaeeLesourdYau2021} extended the concept of free boundary minimal surfaces to marginally outer trapped surfaces (MOTS). Among other results, they proved an analog of Theorem~\ref{theo:Mendes1} for compact free boundary MOTS.

The second result in this work was inspired by the aforementioned results, extending part of the work of Alaee, Lesourd, and Yau to compact free boundary MOTS \(\Sigma^n\), with \( n \geq 3 \), and Yamabe invariant \(\sigma^{1,0}(\Sigma, \partial \Sigma) < 0\). The case of compact free boundary MOTS with Yamabe invariant \(\sigma^{1,0}(\Sigma, \partial \Sigma) \leq 0\) (\emph{i.e.}\ those that do not admit a positive scalar curvature metric with minimal boundary) was recently addressed by the second-named author~\cite{Mendes2022}, which corresponds to the analog of Theorem~\ref{theo:Barrosetal}, item II), for compact free boundary MOTS.

We now state our second result (definitions will be provided in Section~\ref{sec:preliminaries}):

\begin{theointroletter}\label{theo:B}
Let $(M^{n+1}, g, K)$, with $n \geq 3$, be an initial data set with boundary, and let $\Sigma^n$ be a compact, free boundary stable MOTS in $(M^{n+1}, g, K)$ that is weakly outermost and whose Yamabe invariant $\sigma^{1,0}(\Sigma, \partial \Sigma)$ is negative. 

If $\mu - |J| \geq -c$ for some constant $c > 0$, $(M, g, K)$ satisfies the boundary dominant energy condition, and $K$ is $n$-convex, all of which hold on $M_+$, then the volume of $\Sigma$ satisfies
\[
\operatorname{vol}(\Sigma) \geq \left( \frac{|\sigma^{1,0}(\Sigma, \partial \Sigma)|}{2c} \right)^{\frac{n}{2}}.
\]

Moreover, if equality holds, then:
\begin{enumerate}
 \item[$(1)$] There exists an outer neighborhood $V$ of $\Sigma$ in $M$ such that $(V, g|_V)$ is isometric to $[0, \delta) \times \Sigma$ with the product metric $dt^2 + h$, where $h = g|_\Sigma$ and $(\Sigma, h)$ is Einstein with scalar curvature $R^\Sigma = -2c$ and a totally geodesic boundary;
 \item[$(2)$] $K = a \, dt^2$ on $V$, where $a \in C^\infty(V)$ depends only on $t \in [0, \delta)$;
 \item[$(3)$] $\mu = -c$ and $J = 0$ on $V$;
 \item[$(4)$] The boundary dominant energy condition is saturated along $V \cap \partial M$.
\end{enumerate}
\end{theointroletter}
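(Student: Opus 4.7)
The proof follows the strategy of Mendes \cite{Mendes2019TAMS, Mendes2019} for closed MOTS with negative Yamabe invariant, combined with the free boundary MOTS framework of Alaee--Lesourd--Yau \cite{AlaeeLesourdYau2021} and the Yamabe-with-boundary perspective of Barros--Cruz \cite{BarrosCruz2020}.

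The first step is to set up the stability analysis. Since $\Sigma$ is a stable free boundary MOTS and is weakly outermost, the principal eigenvalue of the linearization $L$ of $\theta^+$, with its Robin-type boundary condition inherited from the free boundary condition and the second fundamental form of $\partial M$, is non-negative. A Galloway--Schoen-type symmetrization, in its free boundary form from \cite{AlaeeLesourdYau2021}, produces a self-adjoint operator
\[
L_0 \phi = -\Delta_\Sigma \phi + Q \phi, \qquad Q = \tfrac{1}{2} R^\Sigma - (\mu + J(N)) - \tfrac{1}{2}|\II^\Sigma + K^T|^2,
\]
with a Robin coefficient involving $\II^{\partial M}(N,N)$, and with $\lambda_1(L_0) \geq \lambda_1(L) \geq 0$.

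The second step is a Yamabe-type inequality. Testing $L_0$ against a suitable function (a power of its positive principal eigenfunction), the bound $\mu + J(N) \geq \mu - |J| \geq -c$ absorbs the energy term into a $-c \int u^2$ contribution; the non-negative $|\II^\Sigma + K^T|^2$ is discarded, which is consistent with the $n$-convexity of $K$ together with the MOTS equation $H^\Sigma + \mathrm{tr}_\Sigma K = 0$; and the boundary DEC together with the free boundary condition controls the Robin boundary term, matching the minimal-boundary normalization of the Yamabe functional on $(\Sigma, \partial\Sigma)$. After rescaling to the conformal Laplacian coefficient $4(n-1)/(n-2)$ and invoking the variational characterization of $\sigma^{1,0}(\Sigma, \partial\Sigma)$, the inequality reduces to
\[
\sigma^{1,0}(\Sigma, \partial\Sigma)\, \mathrm{vol}(\Sigma)^{(n-2)/n} \geq -2c\, \mathrm{vol}(\Sigma),
\]
which, since $\sigma^{1,0} < 0$, rearranges to the stated volume bound.

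Equality forces every step above to be sharp: the test function is a Yamabe minimizer, so $(\Sigma, h)$ is Einstein with $R^\Sigma = -2c$ and totally geodesic boundary; $\II^\Sigma + K^T \equiv 0$; $\mu + J(N) = -c$, which combined with $\mu - |J| \geq -c$ and $|J(N)| \leq |J|$ gives $\mu = -c$ and $J \equiv 0$ along $\Sigma$; and the boundary DEC is saturated along $\partial\Sigma$. To propagate this infinitesimal rigidity to an outer neighborhood, I will construct a local free boundary foliation $\{\Sigma_t\}_{t \in [0,\delta)}$ of compact hypersurfaces of constant $\theta^+$, following the Andersson--Metzger MOTS-foliation construction in its free boundary form \cite{AlaeeLesourdYau2021}; the fact that the constant is a principal eigenfunction of $L_0$ in the rigidity case makes this possible via the implicit function theorem. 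The weakly outermost hypothesis forces each leaf to have $\theta^+ \equiv 0$, hence to be a free boundary MOTS, and re-running the stability--Yamabe analysis leaf by leaf yields the product metric $dt^2 + h$ on $V$, the form $K = a\, dt^2$, and the identities in $(1)$--$(4)$. The main technical obstacle will be the simultaneous handling of the Robin-type boundary condition on each leaf and the preservation of the free boundary condition along the entire flow.
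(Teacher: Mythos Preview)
Your overall architecture matches the paper's, but there is a genuine gap in the propagation step. You write that ``the weakly outermost hypothesis forces each leaf to have $\theta^+\equiv 0$''. Weakly outermost only rules out $\theta^+(t)<0$; it says nothing against $\theta^+(t)>0$. The constant-$\theta^+$ foliation you construct via the implicit function theorem gives no a priori sign on $\theta^+(t)$, so this step as stated fails. In the paper the inequality $\theta^+(t)\le 0$ is the heart of the argument: one feeds the evolution of $\theta^+$ along the foliation into a Yamabe-type integral inequality on each leaf, uses the first variation of volume together with $\operatorname{tr}_{\Sigma_t}K\ge 0$ (this is where $n$-convexity is actually used, not in discarding $|\chi^+|^2$) to bound $\frac{d}{dt}\operatorname{vol}(\Sigma_t)$ by $\theta^+(t)$, and then applies a Micallef--Moraru/Mendes ODE-type lemma (Lemma~\ref{lem:Mendes4} here) to conclude $\theta^+\le 0$. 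Only then does weakly outermost give $\theta^+\equiv 0$ and, simultaneously, $\operatorname{vol}(\Sigma_t)=\operatorname{vol}(\Sigma)$, which is what lets you ``re-run the stability--Yamabe analysis leaf by leaf'' in the equality regime.

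A second, smaller gap: even after each $\Sigma_t$ is a MOTS saturating the volume bound, re-running the Yamabe argument only gives $\chi_t^+=0$ and the Einstein structure on each leaf; it does not by itself yield $A_t=0$ and hence the product metric. The paper obtains this by a separate analysis of $\theta^-$ (using that $\operatorname{tr}_{\Sigma_t}K=0$ forces $\theta^-=0$ as well) to conclude $\chi_t^-=0$, whence $A_t=0$ and $K|_{\Sigma_t}=0$. Your proposal should make this step explicit.
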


At the end of Section~\ref{sec:proof.Theo.B}, we present an initial data set model in support of Theorem~\ref{theo:B}.

The paper is organized as follows: In Subsection~\ref{subsec:preliminares1}, we present some preliminaries for Theorem~\ref{theo:A} and, in Subsection~\ref{subsec:preliminares2}, for Theorem~\ref{theo:B}. In Section~\ref{sec:proof.Theo.A}, we present the proof of Theorem~\ref{theo:A} and a corollary of it. Finally, in Section~\ref{sec:proof.Theo.B}, we present the proof of Theorem~\ref{theo:B}.

\medskip

\textbf{Acknowledgments.} The authors sincerely thank Greg Galloway for his kind interest in this work. This research forms part of the first author's Ph.D. thesis, supported in part by the Coordenação de Aperfeiçoamento de Pessoal de Nível Superior (CAPES), Brazil, and the Instituto Federal do Ceará (IFCE), Campus Crateús, Brazil. The second author acknowledges partial support from the Conselho Nacional de Desenvolvimento Científico e Tecnológico (CNPq), Brazil (Grants 309867/2023-1 and 405468/2021-0), as well as from the Fundação de Amparo à Pesquisa do Estado de Alagoas (FAPEAL), Brazil (Process E:60030.0000002254/2022).

\section{Preliminaries}\label{sec:preliminaries}

\subsection{Free boundary MOTS}\label{subsec:preliminares1}

An \emph{initial data set} $(M, g, K)$ consists of a Riemannian\linebreak manifold $(M, g)$ and a symmetric $(0, 2)$-tensor $K$ defined on $M$. For $(M, g, K)$, the \emph{local energy density} and the \emph{local current density} are defined as
\[
\mu = \frac{1}{2}( R^M - |K|^2 + (\operatorname{tr}K)^2) 
\quad \text{and} \quad 
J = \operatorname{div}(K - (\operatorname{tr}K)g),
\]
respectively, where $R^M$ is the scalar curvature of $(M, g)$. The initial data set $(M, g, K)$ is said to satisfy the \emph{dominant energy condition} (DEC) if
\[
\mu - |J| \geq 0 \quad \text{on} \quad M.
\]

In this work, we assume that $M$ is a differentiable manifold with boundary. The second fundamental form of $\partial M$ in $(M, g)$, denoted by $\mathrm{II}^{\partial M}$, is defined as
\[
\mathrm{II}^{\partial M}(Y, Z) = \langle \nabla_Y \varrho, Z \rangle, \quad Y, Z \in \mathfrak{X}(\partial M),
\]
where $\varrho$ is the outward unit normal vector of $\partial M$ in $(M, g)$.

The \emph{momentum tensor} $\pi$ of $(M, g, K)$ is given by
\[
\pi = K - (\operatorname{tr}K)g.
\]

We define $(\iota_{\varrho}\pi)^{\top}$ as the restriction of $\iota_{\varrho}\pi = \pi(\varrho, \cdot)$ to the tangent vector fields on $\partial M$. The initial data set $(M, g, K)$ is said to satisfy the \emph{boundary dominant energy condition} (BDEC) if
\[
H^{\partial M} \geq |(\iota_{\varrho}\pi)^{\top}| \quad \text{on} \quad \partial M.
\]

The BDEC was introduced by S.~Almaraz, L.~L.~de~Lima, and L.~Mari~\cite{AlmarazLimaMari2021} in the context of proving positive mass theorems for asymptotically flat and asymptotically hyperbolic initial data sets with non-compact boundaries (see~\cite[Remark~2.7]{AlmarazLimaMari2021} for the motivation behind this definition).

Let $\Sigma$ be a two-sided hypersurface in $(M, g)$. Fix a field $N$ of unit normal vectors to $\Sigma$ in $(M, g)$. The \emph{null mean curvatures} $\theta^+$ and $\theta^-$ of $\Sigma$ in $(M, g, K)$ are defined as
\[
\theta^+ = \operatorname{tr}_{\Sigma} K + H^{\Sigma} 
\quad \text{and} \quad 
\theta^- = \operatorname{tr}_{\Sigma} K - H^{\Sigma},
\]
where $H^{\Sigma} = \div_\Sigma N$ is the mean curvature of $\Sigma$ in $(M, g)$.

Following R.~Penrose, we say that the hypersurface $\Sigma$ is \emph{outer trapped} if $\theta^+ < 0$, \emph{weakly outer trapped} if $\theta^+ \leq 0$, and \emph{marginally outer trapped} if $\theta^+ = 0$. In the latter case, $\Sigma$ is referred to as a \emph{MOTS} (an abbreviation for \emph{marginally outer trapped surface}).

The \emph{null second fundamental forms} $\chi^+$ and $\chi^-$ of $\Sigma$ in $(M, g, K)$ are defined as
\[
\chi^+ = K|_{\Sigma} + A 
\quad \text{and} \quad 
\chi^- = K|_{\Sigma} - A,
\]
where $A$ is the second fundamental form of $\Sigma$ in $(M, g)$, given by
\[
A(Y, Z) = \langle \nabla_Y N, Z \rangle, \quad Y, Z \in \mathfrak{X}(\Sigma).
\]
Note that $\theta^\pm = \operatorname{tr} \chi^\pm$.

Let $(\Sigma_{t})_{|t|<\varepsilon}$ be a variation of $\Sigma$ in $M$, with $\Sigma = \Sigma_0$ and variational vector field
\[
\mathcal{V} = \frac{\partial}{\partial t}\Big|_{t=0} = \phi N \quad \text{for some} \quad \phi \in C^{\infty}(\Sigma).
\]

We can view the null mean curvature $\theta^+ = \theta^+(t)$ of $\Sigma_t$ in $(M, g, K)$ as a one-parameter family of functions defined on $\Sigma$. It is known (see~\cite{AnderssonMarsSimon2008}) that
\begin{equation}\label{eq:variacaotheta}
\frac{\partial \theta^+}{\partial t}\Big|_{t=0} = L\phi + \Big(-\frac{1}{2} (\theta^+)^2 + \theta^+ \tau \Big) \phi,
\end{equation}
where
\[
L\phi = -\Delta \phi + 2\langle X, \nabla \phi \rangle + ( Q - |X|^2 + \operatorname{div} X ) \phi 
\]
and
\[
Q = \frac{1}{2} R^{\Sigma} - (\mu + J(N)) - \frac{1}{2} |\chi^+|^2.
\]

Here, $R^{\Sigma}$ is the scalar curvature of $\Sigma$ with respect to the induced metric. Additionally, $X$ is the vector field tangent to $\Sigma$ that is dual to the 1-form $K(N, \cdot)|_\Sigma$, and $\tau = \operatorname{tr} K$.

The above operator $L$ is referred to as the \emph{stability operator for MOTS}. This terminology arises because, in the Riemannian case (\emph{i.e.}\ when $K \equiv 0$), a MOTS corresponds to a minimal hypersurface, and the operator $L$ reduces to the classical stability operator (or the Jacobi operator) in the theory of minimal surfaces.

It is worth noting that the notion of stability for closed MOTS was introduced by\linebreak L.~Andersson, M.~Mars, and W.~Simon~\cite{AnderssonMarsSimon2008}. In the case of capillary MOTS in initial data sets with boundary, the notion of stability was introduced by A.~Alaee, M.~Lesourd, and S.-T.~Yau~\cite{AlaeeLesourdYau2021}. In this work, we will only consider the case of compact free boundary MOTS in initial data sets $(M, g, K)$ with boundary.

From now on, we assume that $M$ is a differentiable manifold with boundary and that $\Sigma$ is a properly embedded, compact hypersurface with boundary in $M$. That is, $\Sigma$ is embedded in $M$ and satisfies $\partial \Sigma = \Sigma \cap \partial M$.

We say that $\Sigma$ is \emph{free boundary} in $(M, g)$ if $\Sigma$ intersects $\partial M$ orthogonally, \emph{i.e.}\ $\varrho = \nu$ along $\partial \Sigma$, where $\nu$ is the outward unit normal vector of $\partial \Sigma$ in $\Sigma$ with respect to the induced metric.

A compact free boundary MOTS $\Sigma$ in $(M, g, K)$ is said to be \emph{stable} (see Definition~5.1 in~\cite{AlaeeLesourdYau2021}) if there exists a non-negative function $\phi \in C^{\infty}(\Sigma) \setminus \{0\}$ satisfying
\[
\begin{cases}
L\phi = -\Delta \phi + 2 \langle X, \nabla \phi \rangle + ( Q - |X|^2 + \operatorname{div} X ) \phi \geq 0 & \text{on} \quad \Sigma,\vspace{0.1cm}\\
B\phi = \dfrac{\partial \phi}{\partial \nu} - \mathrm{II}^{\partial M}(N, N) \phi = 0 & \text{on} \quad \partial \Sigma.
\end{cases}
\]
Without loss of generality, by the maximum principle for non-negative functions, we can assume that $\phi > 0$.

Suppose $\Sigma$ is a MOTS that separates $(M, g, K)$, that is, $M \setminus \Sigma$ is disconnected. The \emph{exterior} of $\Sigma$, denoted by $M_+$, is the region consisting of $\Sigma$ and the portion of $M \setminus \Sigma$ toward which the unit normal vector $N$ points.

We say that $\Sigma$ is \emph{outermost} if no hypersurface homologous to $\Sigma$, contained in $M_+$, except $\Sigma$, has null mean curvature $\theta^+ \leq 0$. Equivalently, $\Sigma$ is outermost if, for any hypersurface $\Sigma' \neq \Sigma$ homologous to $\Sigma$ and contained in $M_+$, there exists a point $p \in \Sigma'$ such that $\theta^+(p) > 0$. Similarly, we say that $\Sigma$ is \emph{weakly outermost} if no hypersurface homologous to $\Sigma$, contained in $M_+$, has null mean curvature $\theta^+ < 0$.

We say that $K$ is \emph{$n$-convex} on $M_+$ if $\operatorname{tr}_\pi K \geq 0$ for any $p \in M_+$ and any $n$-dimensional linear subspace $\pi \subset T_p M$. Equivalently, $K$ is $n$-convex if the sum of the $n$ smallest eigenvalues of $K$ is always non-negative.

The next three results are due to the second-named author~\cite{Mendes2022}, who generalized results by G.~J.~Galloway and R.~Schoen~\cite{GallowaySchoen2006} to the context of compact free boundary MOTS in initial data sets with boundary.

\begin{lem}[Mendes, 2022]\label{lem:Mendes1}
Let $(\Sigma^n, h)$, with $n \geq 2$, be an orientable, connected, compact Riemannian manifold with boundary. Suppose there exists a function \( u \in C^{\infty}(\Sigma) \), \( u > 0 \), such that 
\[
\begin{cases} 
\mathcal{L} u = -\Delta u + (\frac{1}{2} R^{\Sigma} - P) u \geq 0 & \text{on}\quad \Sigma,\vspace{0.1cm} \\ 
\dfrac{\partial u}{\partial \nu} + H^{\partial \Sigma} u \geq 0 & \text{on}\quad \partial \Sigma, 
\end{cases} 
\]
where \( R^{\Sigma} \) is the scalar curvature of \( (\Sigma, h) \), \( H^{\partial \Sigma} \) is the mean curvature of \( \partial \Sigma \) in \( (\Sigma, h) \), and \( P \) is a non-negative function on \( \Sigma \). Then one of the following conditions holds:
\begin{enumerate} 
\item[$(1)$] \( \Sigma \) admits a metric with positive scalar curvature and minimal boundary; 
\item[$(2)$] \( (\Sigma, h) \) is Ricci-flat with a totally geodesic boundary, \( P \equiv 0 \), and \( u \) is constant. 
\end{enumerate} 
\end{lem}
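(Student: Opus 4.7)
The approach adapts the supersolution method of Galloway and Schoen~\cite{GallowaySchoen2006} to the Robin boundary condition that the free-boundary framework naturally produces. The first step is to set up the self-adjoint boundary eigenvalue problem
\[
\mathcal{L}\varphi = \lambda\,\varphi \quad\text{on }\Sigma,\qquad \partial_\nu \varphi + H^{\partial\Sigma}\varphi = 0 \quad\text{on }\partial\Sigma,
\]
whose principal eigenvalue $\lambda_1$ is simple with a positive first eigenfunction. An application of Green's identity converts $\int_{\Sigma}(\varphi\,\mathcal{L}u - u\,\mathcal{L}\varphi)$ into the boundary integral $-\int_{\partial\Sigma}\varphi(\partial_\nu u + H^{\partial\Sigma}u)$, which combined with the hypotheses $\mathcal{L}u\geq 0$ and $\partial_\nu u + H^{\partial\Sigma}u \geq 0$ gives $\lambda_1 \geq 0$. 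This is preliminary rather than the heart of the argument.

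The key observation is that the Schoen--Yau conformal change $\tilde h = u^{2/(n-1)} h$ is simultaneously adapted to the operator \emph{and} to the Robin boundary condition. With $f = \tfrac{1}{n-1}\log u$, the transformation formula $\tilde H^{\partial\Sigma} = e^{-f}\bigl(H^{\partial\Sigma} + (n-1)\partial_\nu f\bigr)$ yields
\[
\tilde H^{\partial\Sigma} = u^{-n/(n-1)}\bigl(\partial_\nu u + H^{\partial\Sigma}u\bigr) \geq 0,
\]
while the interior scalar-curvature formula, combined with the inequality $\Delta u/u \leq \tfrac{1}{2}R^{\Sigma} - P$ extracted from $\mathcal{L}u\geq 0$, gives
\[
R^{\tilde h}\,u^{2/(n-1)} = R^{\Sigma} - 2\,\tfrac{\Delta u}{u} + \tfrac{n}{n-1}\,\tfrac{|\nabla u|^2}{u^2} \;\geq\; 2P + \tfrac{n}{n-1}\,\tfrac{|\nabla u|^2}{u^2} \;\geq\; 0.
\]
Thus $(\Sigma, \tilde h)$ has non-negative scalar curvature on the interior and non-negative mean curvature on the boundary.

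The proof then invokes the boundary analog of Bourguignon's rigidity alternative for non-negative scalar curvature: either $(\Sigma, \tilde h)$ admits, via a small conformal deformation, a metric with $R>0$ and $H^{\partial\Sigma}=0$, yielding alternative~(1); or $\tilde h$ itself is Ricci-flat with totally geodesic boundary. In the latter case, both chains of inequalities above must saturate pointwise, forcing $P\equiv 0$, $|\nabla u|\equiv 0$ (so $u$ is a positive constant), and $\partial_\nu u + H^{\partial\Sigma}u\equiv 0$; plugging back in, $\mathcal{L}u=0$ delivers $R^{\Sigma}\equiv 0$ and the boundary relation gives $H^{\partial\Sigma}\equiv 0$. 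Because $\tilde h$ is then a constant rescaling of $h$, the Ricci-flat totally geodesic structure descends from $\tilde h$ to $h$, giving alternative~(2). The main technical obstacle is the boundary version of Bourguignon's theorem, which plays the role Kazdan--Warner's theorem plays in the closed case of~\cite{GallowaySchoen2006}; this is the genuinely new ingredient relative to the closed setting and the place where the presence of $\partial\Sigma$ really enters the argument.
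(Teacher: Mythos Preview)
The paper does not actually prove this lemma: it is quoted verbatim from \cite{Mendes2022} and used as a black box, so there is no in-paper proof to compare against. That said, your outline is essentially the argument given in \cite{Mendes2022}, which is precisely the Galloway--Schoen conformal trick \cite{GallowaySchoen2006} transported to the free boundary setting. The computation you wrote for $R^{\tilde h}$ and $\tilde H^{\partial\Sigma}$ under the change $\tilde h = u^{2/(n-1)}h$ is correct, and the saturation analysis in the rigid branch is exactly right: $R^{\tilde h}=0$ forces $P\equiv 0$, $\nabla u\equiv 0$, and then $\tilde h$ being a constant multiple of $h$ transfers Ricci-flatness and the totally geodesic boundary back to $h$.

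The one place to be careful is what you call the ``boundary analog of Bourguignon's rigidity alternative.'' You need the statement: if a compact manifold with boundary carries a metric with $R\ge 0$ and $H^{\partial\Sigma}\ge 0$, then either it admits a metric with $R>0$ and minimal boundary, or the given metric already has $R\equiv 0$, $H^{\partial\Sigma}\equiv 0$, and is Ricci-flat with totally geodesic boundary. This is true, but it is not a one-line consequence of the closed Kazdan--Warner theorem; it requires the boundary Yamabe machinery (first eigenvalue of the conformal Laplacian with the oblique boundary condition, plus a second-variation argument to pass from scalar-flat/minimal to Ricci-flat/totally geodesic). In \cite{Mendes2022} this step is handled explicitly, so if you are writing this up you should either cite that reference or supply the perturbation argument yourself rather than leave it as an invocation. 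Apart from that, your proof plan is sound. (The preliminary paragraph on $\lambda_1(\mathcal L)\ge 0$ is correct but, as you note, is not actually used in the conformal argument and can be dropped.)
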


Conclusion~(2) of Lemma~\ref{lem:Mendes1} will be important for our purposes. In fact, one of the hypotheses of Theorem~\ref{theo:A} states that $\Sigma$ does not satisfy condition~(1). Thus, by verifying that the assumptions of Lemma~\ref{lem:Mendes1} are met, we ensure the validity of condition~(2).

\begin{lem}[Mendes, 2022]\label{lem:Mendes3}
Let \( (M^{n+1}, g, K) \), with \( n \ge 2 \), be an initial data set with boundary, and let \( \Sigma \) be a compact free boundary MOTS in \( (M, g, K) \). If \( \Sigma \) is stable, then the first eigenvalue \( \lambda_1(\mathcal{L}_0) \) of the operator \( \mathcal{L}_0 = -\Delta + Q \) on \( \Sigma \) with Robin boundary condition 
\[
B_0 u = B u + \langle X, \nu \rangle u = 0 \quad \text{on} \quad \partial \Sigma
\]
is non-negative.
\end{lem}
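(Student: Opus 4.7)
The plan is to mimic the Galloway--Schoen trick from the closed MOTS case, with the Robin boundary term $\langle X,\nu\rangle u$ appearing in $B_0$ chosen precisely so as to absorb the extra contributions produced along $\partial\Sigma$. By the stability hypothesis, pick $\phi\in C^\infty(\Sigma)$ with $\phi>0$, $L\phi\geq 0$ on $\Sigma$ and $B\phi=0$ on $\partial\Sigma$. Setting $w=\log\phi$, a direct computation rewrites the stability inequality as
\[
0\;\leq\;\frac{L\phi}{\phi}\;=\;-\Delta w + Q + \operatorname{div} X - |\nabla w - X|^2,
\]
while the free boundary condition $B\phi=0$ becomes $\partial w/\partial\nu = \mathrm{II}^{\partial M}(N,N)$ on $\partial\Sigma$. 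The purpose of this logarithmic substitution is to fold the drift term $2\langle X,\nabla\phi\rangle$ inside $L$ into the quadratic term $|\nabla w - X|^2$, which is what permits passage to a symmetric Rayleigh quotient.

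Next I test against $u^2$, where $u\in C^\infty(\Sigma)$ is an arbitrary function satisfying $B_0 u=0$. Integrating the displayed inequality against $u^2$ and applying the divergence theorem to both $\int_\Sigma -\Delta w\cdot u^2$ and $\int_\Sigma \operatorname{div} X\cdot u^2$ produces a gradient cross term $2\int_\Sigma u\langle\nabla w-X,\nabla u\rangle$, together with a boundary term $\int_{\partial\Sigma}(\langle X,\nu\rangle-\partial w/\partial\nu)u^2$. Using the identity $\partial w/\partial\nu=\mathrm{II}^{\partial M}(N,N)$, the latter becomes $\int_{\partial\Sigma}(\langle X,\nu\rangle-\mathrm{II}^{\partial M}(N,N))u^2$. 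The pointwise Cauchy--Schwarz bound $2u\langle\nabla w-X,\nabla u\rangle\leq |\nabla u|^2 + |\nabla w-X|^2 u^2$ then cancels the square term on the right, yielding
\[
0\;\leq\;\int_\Sigma\bigl(|\nabla u|^2 + Q u^2\bigr)\,dA \;+\; \int_{\partial\Sigma}\bigl(\langle X,\nu\rangle - \mathrm{II}^{\partial M}(N,N)\bigr)u^2\,dL.
\]

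A second integration by parts applied to $\int_\Sigma u\,\mathcal{L}_0 u$, using $B_0 u=0$ (equivalently $\partial u/\partial\nu = (\mathrm{II}^{\partial M}(N,N)-\langle X,\nu\rangle)u$), identifies the right-hand side above with $\int_\Sigma u\,\mathcal{L}_0 u\,dA$. Since $u$ was arbitrary subject to $B_0 u=0$ and $\mathcal{L}_0$ with this Robin condition is self-adjoint, the min-max characterization of its principal eigenvalue delivers $\lambda_1(\mathcal{L}_0)\geq 0$. The only delicate point is making the two integration-by-parts boundary terms combine to exactly the Robin coefficient $\langle X,\nu\rangle-\mathrm{II}^{\partial M}(N,N)$ built into $B_0$; it is precisely to achieve this matching that the correction $\langle X,\nu\rangle u$ is included in the definition of $B_0$.
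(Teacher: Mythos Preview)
Your argument is correct and is precisely the Galloway--Schoen symmetrization trick adapted to the free boundary setting; this is the approach used in the cited reference, and the paper itself does not reprove the lemma. One minor simplification: your key inequality
\[
0\;\leq\;\int_\Sigma\bigl(|\nabla u|^2 + Q u^2\bigr)\,dv \;-\; \int_{\partial\Sigma}\bigl(\mathrm{II}^{\partial M}(N,N)-\langle X,\nu\rangle\bigr)u^2\,ds
\]
in fact holds for \emph{every} $u\in C^\infty(\Sigma)$, with no boundary restriction, and the right-hand side is exactly the Rayleigh quotient numerator for $\lambda_1(\mathcal{L}_0)$ as recorded immediately after the lemma in the paper; so the second integration by parts and the restriction $B_0u=0$ are unnecessary.
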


It is well known that the first eigenvalue \( \lambda_1(\mathcal{L}_0) \) of the operator \( \mathcal{L}_0 \) defined in the previous lemma can be characterized as follows:
\[
\lambda_1(\mathcal{L}_0) = \inf_{u \in C^{\infty}(\Sigma) \setminus \{0\}} \frac{\ds\int_{\Sigma} ( |\nabla u|^2 + Q u^2 ) dv - \int_{\partial \Sigma}( \mathrm{II}^{\partial M}(N, N) - \langle X, \nu \rangle ) u^2 ds}{\ds\int_{\Sigma} u^2 dv}.
\]

\begin{theo}[Mendes, 2022]\label{teorema:Mendes}
Let \((M^{n+1}, g, K)\), with \(n \geq 2\), be an initial data set with boundary, and let \(\Sigma\) be a compact, free boundary stable MOTS in \((M, g, K)\). Assume that \((M, g, K)\) satisfies both the DEC and the BDEC, \emph{i.e.}\ \(\mu \geq |J|\) on \(M\) and \(H^{\partial M} \geq |(\iota_{\varrho} \pi)^{\top}|\) on~\(\partial M\). If \(\Sigma\) is weakly outermost in \((M, g, K)\) and does not admit a metric with positive scalar curvature and minimal boundary, then there exists an outer neighborhood \(V \cong [0, \delta) \times \Sigma\) of \(\Sigma\) in \(M\) such that
\[
g|_V = \varphi^2 d t^2 + h_t,
\]
where \(\varphi: V \to \mathbb{R}\) is a positive function and \(h_t\) is the induced metric on \(\Sigma_t \cong \{t\} \times \Sigma\). Moreover, the following conditions hold:
\begin{enumerate}
 \item[$(1)$] \(\Sigma_t \cong \{t\} \times \Sigma\) is a free boundary MOTS, and it has vanishing outward null second fundamental form;
 \item[$(2)$] \(\Sigma_t\) is Ricci-flat and has a totally geodesic boundary with respect to the induced metric;
 \item[$(3)$] The DEC is saturated on \(V\), and \(J|_{\Sigma_t} = 0\);
 \item[$(4)$] The BDEC is saturated on \(\partial \Sigma_t\), and \((\iota_{\varrho} \pi)^{\top}|_{\partial \Sigma_t} = 0\).
\end{enumerate}
\end{theo}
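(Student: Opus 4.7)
The plan is to split the proof into two ingredients: (i) a pointwise rigidity on $\Sigma=\Sigma_0$, obtained by combining the MOTS stability with the DEC, the BDEC, and Lemma~\ref{lem:Mendes1}; and (ii) a free-boundary MOTS foliation of an outer neighborhood, built from the positive null eigenfunction produced by (i), along which every rigid identity propagates. The argument is the free-boundary counterpart of the Galloway--Schoen proof of infinitesimal rigidity for closed MOTS.

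For (i), let $\phi>0$ be a first eigenfunction of $\mathcal{L}_0$ with eigenvalue $\lambda_1(\mathcal{L}_0)\geq 0$, whose existence is guaranteed by Lemma~\ref{lem:Mendes3}. Substituting $Q=\frac{1}{2}R^\Sigma-(\mu+J(N))-\frac{1}{2}|\chi^+|^2$, the interior equation becomes
\[
-\Delta\phi+\Bigl(\frac{1}{2}R^\Sigma-P\Bigr)\phi=0,\qquad P:=\lambda_1(\mathcal{L}_0)+(\mu+J(N))+\frac{1}{2}|\chi^+|^2,
\]
and the DEC $\mu\geq|J|$ forces $\mu+J(N)\geq 0$, so $P\geq 0$. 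Along $\partial\Sigma$, the free-boundary identity $\mathrm{II}^{\partial M}(N,N)=H^{\partial M}-H^{\partial\Sigma}$ together with $\langle X,\nu\rangle=K(N,\varrho)=(\iota_\varrho\pi)^\top(N)$ turns the Robin condition $B\phi+\langle X,\nu\rangle\phi=0$ into $\partial_\nu\phi+H^{\partial\Sigma}\phi=(H^{\partial M}-\langle X,\nu\rangle)\phi$, which is non-negative by the BDEC. Lemma~\ref{lem:Mendes1} therefore applies, and the hypothesis that $\Sigma$ admits no metric of positive scalar curvature with minimal boundary forces: $(\Sigma,h)$ is Ricci-flat with totally geodesic boundary; $P\equiv 0$ on $\Sigma$, so $\lambda_1(\mathcal{L}_0)=0$, $\mu+J(N)=0$, and $\chi^+\equiv 0$; $\phi$ is a positive constant; and $H^{\partial M}-\langle X,\nu\rangle\equiv 0$ along $\partial\Sigma$. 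Combining $\mu+J(N)=0$ with the DEC yields $J=-|J|N$ pointwise on $\Sigma$, so the tangential part $J|_\Sigma$ vanishes as a covector on $\Sigma$; analogously, the saturated BDEC forces $(\iota_\varrho\pi)^\top|_{\partial\Sigma}=0$.

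For (ii), the operator $L$ shares $\lambda_1=0$ with $\mathcal{L}_0$, so it admits a positive null eigenfunction $\phi_L$. I invoke the free-boundary analog of the Andersson--Metzger implicit function theorem (in the framework of Alaee--Lesourd--Yau) to produce a smooth family of free-boundary hypersurfaces $\{\Sigma_t\}_{t\in[0,\delta)}$ in $M_+$ with $\Sigma_0=\Sigma$, each a normal graph over $\Sigma$ and with spatially constant null expansion $\omega(t):=\theta^+(\Sigma_t)$. Weakly outermost gives $\omega(t)\geq 0$, and a Galloway--Schoen-type maximum-principle argument, adapted to the Robin boundary condition, forces $\omega\equiv 0$, so every $\Sigma_t$ is a free-boundary MOTS. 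The construction also shows that each $\Sigma_t$ is stable, whence the step~(i) analysis applies verbatim to every leaf and produces items~(1)--(4): $\chi^+_t=0$ on $\Sigma_t$, each leaf is Ricci-flat with totally geodesic boundary in the induced metric, the DEC saturates on $V$ with $J|_{\Sigma_t}=0$, and the BDEC saturates on $\partial\Sigma_t$ with $(\iota_\varrho\pi)^\top|_{\partial\Sigma_t}=0$. Writing the metric in Gaussian-type coordinates adapted to the foliation yields $g|_V=\varphi^2\,dt^2+h_t$, with $\varphi$ the lapse of the deformation and $h_t$ the induced metric on $\Sigma_t$.

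The main obstacle is step~(ii): adapting the MOTS implicit function theorem to the Robin boundary condition of $\mathcal{L}_0$ and then verifying, via a Galloway--Schoen-type maximum-principle argument, that the spatially constant null expansion of each leaf vanishes identically. A secondary delicate point is the uniform propagation of the rigid identities $\chi^+=0$, $P=0$, and the two saturated energy conditions from the initial slice to every leaf; this is accomplished by applying Lemma~\ref{lem:Mendes1} on each $\Sigma_t$ and exploiting the stability inherited from the foliation construction.
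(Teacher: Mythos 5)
You should note at the outset that the paper does not prove Theorem~\ref{teorema:Mendes}: it is imported verbatim from \cite{Mendes2022}, so there is no in-paper proof to compare against. Your outline does reconstruct the strategy of the cited proof. Step (i) is correct and matches it: stability via Lemma~\ref{lem:Mendes3}, the DEC giving $\mu+J(N)\ge 0$, the free boundary identities $\mathrm{II}^{\partial M}(N,N)=H^{\partial M}-H^{\partial\Sigma}$ and $\langle X,\nu\rangle=(\iota_\varrho\pi)^\top(N)$ turning the Robin condition into the boundary inequality of Lemma~\ref{lem:Mendes1}, and the equality analysis (constancy of the eigenfunction, $P\equiv 0$, saturation of DEC and BDEC, and the Cauchy--Schwarz equality case forcing $J|_\Sigma=0$ and $(\iota_\varrho\pi)^\top|_{\partial\Sigma}=0$) all check out. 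The caveat is step (ii), which is where the real work of \cite{Mendes2022} lies: the existence of the free-boundary, constant-null-expansion foliation compatible with the Robin condition --- simplicity of the zero eigenvalue of $L$ and of its formal adjoint $L^*$ with positive eigenfunctions, the inverse-function-theorem argument in spaces respecting the free boundary constraint, and the Gronwall-type estimate giving $\theta^+(t)\le 0$ before weak outermostness upgrades it to $\theta^+\equiv 0$ --- is invoked by name (``free-boundary analog of Andersson--Metzger, in the framework of Alaee--Lesourd--Yau'') rather than established. In the present paper these ingredients appear only in the related setting of Proposition~\ref{prop:aux2}, as Lemmas~\ref{lem:aux1} and~\ref{lem:aux2}, and are themselves cited back to \cite{Mendes2022}. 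So: correct strategy, essentially the same route as the source, but with the decisive foliation construction assumed rather than proved.
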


A crucial step for Galloway and Jang in proving Theorem~\ref{theo:Galloway-Jang} was demonstrating that $\Sigma$ is a MOTS in the special initial data set $(M, g, K = -\epsilon g)$. To achieve this, they relied on a well-known result by L.~Andersson and J.~Metzger (see~\cite[Lemma~5.2]{AnderssonMetzger2009}).

In our context, a similar result is required for compact free boundary hypersurfaces. This will be established in Lemma~\ref{lem:K=-eg} later. However, before proceeding, we will state a maximum principle for parabolic equations, which plays a key role in the proof of the lemma.

Let $u: \Sigma \times [0, T] \to \mathbb{R}$, with $T > 0$, be a function, and let $\mathcal{L}u = \frac{\partial u}{\partial t} + Pu$ be a parabolic operator, with $P$ defined as 
\[
Pu = -a^{ij}(x, t)\nabla_i\nabla_j u - b^i(x, t)\nabla_i u - c(x, t)u,
\]
where $a^{ij}, b^i, c \in L^\infty(\Sigma \times [0, T])$.

Additionally, let $f:\mathbb{R} \times \Sigma \times [0, T] \to \mathbb{R}$ be a function satisfying the following conditions: 
\[
f(0, x, t) \geq 0,
\]
\[
|f(s, x, t) - f(0, x, t)| \leq \kappa(x, t) \cdot |s| \quad \text{for} \quad |s| \leq \rho,
\]
where $\kappa \in L^\infty(\Sigma \times [0, T])$ and $\rho > 0$ is a constant.

Now, consider a vector field $v$ defined on $\partial \Sigma \times [0, T]$, where $v$ is tangent to $\Sigma$ at each point and satisfies $g(v, \nu) > 0$, with $\nu$ denoting the outward unit normal field of $\partial \Sigma$ in $\Sigma$.

We are now ready to state the maximum principle:

\begin{theo}[Maximum principle for parabolic equations]\label{theo:PrincipioMaximo}
Let $\mathcal{L}$, $f$, and $v$ be as defined above, and let $u: \Sigma \times [0, T] \to \mathbb{R}$ be a continuous function such that $u$ is of class $C^{2,1}$ in a neighborhood of each point $(x, t) \in \Sigma \times [0, T]$ where $|u(x, t)| < \varepsilon$, with $\varepsilon > 0$ being a small constant. Suppose that $u$ satisfies the following conditions:
\begin{enumerate}
 \item[$(i)$] $u(\cdot, 0) \geq 0$;
 \item[$(ii)$] $\mathcal{L}u(x, t) \geq 0$ for every $(x, t) \in \Sigma \times (0, T]$ with $|u(x, t)| < \rho$;
 \item[$(iii)$] $\frac{\partial u}{\partial v}(x, t) = f(u(x, t), x, t)$ for each $(x, t) \in \partial \Sigma \times (0, T]$ with $|u(x, t)| < \rho$.
\end{enumerate}
Then, $u \geq 0$. Furthermore, if $u(\cdot, 0)$ is not identically zero, then $u(x, t) > 0$ for every $(x, t) \in \Sigma \times (0, T) \cup \operatorname{int}(\Sigma) \times \{T\}$.
\end{theo}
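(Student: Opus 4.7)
The plan is to prove the two assertions separately. For the nonnegativity $u\ge 0$, I would use a perturbation-plus-first-touching argument tailored to the oblique boundary condition; for the strict positivity, I would combine the parabolic strong maximum principle in the interior with a Hopf-type boundary-point lemma along $\partial\Sigma$. Both arguments exploit the fact that wherever $|u|<\min(\rho,\varepsilon)$ the function $u$ is $C^{2,1}$, so hypotheses (ii) and (iii) both apply classically.

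For the nonnegativity, let $r\colon\Sigma\to[0,R_0]$ be a smooth function that coincides with $\operatorname{dist}(\cdot,\partial\Sigma)$ near $\partial\Sigma$, so that $\nabla r=-\nu$ on $\partial\Sigma$, and truncated so that $R_0$ is as small as we wish. For constants $\eta,\lambda,K>0$ to be chosen, set
\[
\tilde w \;=\; u + \eta\, e^{Kt}(1-\lambda r).
\]
Choose $\lambda$ large enough that $\lambda\,g(v,\nu)>\kappa$ uniformly on $\partial\Sigma\times[0,T]$, then pick $R_0$ small enough that $\lambda R_0\le 1/2$, and finally choose $K$ so large that
\[
(K-c)(1-\lambda r)+\lambda\bigl(a^{ij}\nabla_i\nabla_j r+b^i\nabla_i r\bigr) \;>\; 0
\]
uniformly on $\Sigma\times[0,T]$; this is possible since $r$ is smooth and $a^{ij},b^i,c\in L^\infty$. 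Finally, choose $\eta$ so small that $\eta e^{KT}<\tfrac{1}{2}\min(\rho,\varepsilon)$. Then $\tilde w(\cdot,0)\ge \eta/2>0$. Suppose for contradiction that $\tilde w\le 0$ somewhere; by continuity there is a first time $t_*\in(0,T]$ and a point $x_*\in\Sigma$ with $\tilde w(x_*,t_*)=0$ and $\tilde w\ge 0$ on $\Sigma\times[0,t_*]$. At $(x_*,t_*)$ we have $|u(x_*,t_*)|=\eta e^{Kt_*}(1-\lambda r(x_*))<\min(\rho,\varepsilon)$, so $u$ is $C^{2,1}$ near the point and hypotheses (ii) and (iii) are available. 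If $x_*\in\operatorname{int}(\Sigma)$, the first-order minimum conditions $\partial_t\tilde w\le 0$, $\nabla\tilde w=0$, $\operatorname{Hess}\tilde w\ge 0$, combined with $\tilde w(x_*,t_*)=0$, force $\mathcal{L}\tilde w(x_*,t_*)\le 0$; whereas a direct computation using $\mathcal{L}u\ge 0$ gives $\mathcal{L}\tilde w(x_*,t_*)\ge\eta e^{Kt_*}\bigl[(K-c)(1-\lambda r)+\lambda(a^{ij}\nabla_i\nabla_j r+b^i\nabla_i r)\bigr]>0$, a contradiction. If $x_*\in\partial\Sigma$, the first-order condition at a minimum over $\Sigma$ yields $\partial_v\tilde w(x_*,t_*)\le 0$, while the boundary hypothesis combined with $f(0,\cdot,\cdot)\ge 0$, the Lipschitz bound on $f$, and $\partial_v r|_{\partial\Sigma}=-g(v,\nu)$ gives
\[
\partial_v\tilde w(x_*,t_*) \;\ge\; -\kappa\,\eta e^{Kt_*} + \eta e^{Kt_*}\lambda\,g(v,\nu) \;=\; \eta e^{Kt_*}\bigl(\lambda\,g(v,\nu)-\kappa\bigr) \;>\; 0,
\]
again a contradiction. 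Hence $\tilde w>0$ on $\Sigma\times[0,T]$, and letting $\eta\to 0^+$ yields $u\ge 0$.

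For the strict positivity, suppose there exists $(x_0,t_0)\in\Sigma\times(0,T)\cup\operatorname{int}(\Sigma)\times\{T\}$ with $u(x_0,t_0)=0$. By continuity, $|u|<\min(\rho,\varepsilon)$ in some parabolic neighborhood of $(x_0,t_0)$, so $u$ is $C^{2,1}$ there and $\mathcal{L}u\ge 0$ in that neighborhood. If $x_0\in\operatorname{int}(\Sigma)$, the classical parabolic strong maximum principle, applied after the substitution $u\mapsto e^{-Ct}u$ to absorb the zeroth-order term if necessary, forces $u\equiv 0$ on the connected component of $\Sigma\times[0,t_0]$ containing $(x_0,t_0)$; this contradicts the nontriviality of $u(\cdot,0)$. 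If $x_0\in\partial\Sigma$, either $u\equiv 0$ in a neighborhood of $(x_0,t_0)$, which reduces to the previous case via the spatial strong maximum principle, or the parabolic Hopf boundary-point lemma yields $\partial_\nu u(x_0,t_0)<0$. Since $u$ attains a minimum at $x_0$ along $\partial\Sigma$ at time $t_0$, its tangential gradient vanishes there, so decomposing $v=g(v,\nu)\,\nu+v_T$ gives $\partial_v u(x_0,t_0)=g(v,\nu)\,\partial_\nu u(x_0,t_0)<0$, contradicting the boundary condition $\partial_v u(x_0,t_0)=f(0,x_0,t_0)\ge 0$.

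The main obstacle is the nonlinear oblique boundary condition: the bare first-order inequality $\partial_v\tilde w\le 0$ at a boundary minimum is not contradicted by the raw estimate $\partial_v\tilde w\ge -\kappa|u|$ that the boundary hypothesis provides. The key device is the factor $(1-\lambda r)$ in $\tilde w$, which injects a strictly positive boundary contribution $\eta e^{Kt}\lambda\,g(v,\nu)$ into $\partial_v\tilde w$ that beats the Lipschitz error from $f$ once $\lambda$ is taken large. Balancing $\lambda$ against $\kappa$, then shrinking $R_0$ to preserve $1-\lambda r\ge 1/2$, and finally enlarging $K$ to dominate the lower-order contributions from $Pr$, is what allows the interior and boundary contradictions to be reached simultaneously while keeping $u$ within the regime $|u|<\min(\rho,\varepsilon)$ where the hypotheses are available.
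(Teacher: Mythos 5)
The paper does not actually prove this theorem: it is imported from Stahl's work on the mean curvature flow with a Neumann boundary condition, and the text simply refers the reader to \cite{Stahl1996}. So your argument should be judged as a self-contained substitute, and in substance it follows the standard route one would expect there. Your proof of the weak inequality $u\ge 0$ is correct and complete in its essentials: the barrier $\eta e^{Kt}(1-\lambda r)$ is exactly the right device, the choice $\lambda\, g(v,\nu)>\kappa$ defeats the Lipschitz error in $f$ at a boundary first-touching point, and you correctly arrange $\eta e^{KT}<\tfrac12\min(\rho,\varepsilon)$ so that at the touching point $|u|<\min(\rho,\varepsilon)$, which is what makes hypotheses $(ii)$, $(iii)$ and the $C^{2,1}$ regularity available there. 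The only unstated ingredients are that $g(v,\nu)$ has a positive lower bound (continuity of $v$ plus compactness of $\partial\Sigma\times[0,T]$) and uniform bounds on $a^{ij},b^i,c,\kappa$, all harmless at the level of rigor of the statement.

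The strict positivity part identifies the right tools (interior parabolic strong maximum principle plus a parabolic Hopf lemma played against $\partial_v u=f(u)$, $f(0,\cdot,\cdot)\ge 0$, with the tangential gradient vanishing at a boundary minimum), but two steps are asserted more strongly than what you may cite. First, you cannot apply the classical strong maximum principle on all of $\Sigma\times[0,t_0]$ as a black box, since $\mathcal{L}u\ge 0$ is only hypothesized where $|u|<\rho$; you need the propagation-of-zeros argument: every zero of $u$ has, by continuity of $u$ and $u\ge0$, a parabolic neighborhood on which $|u|<\min(\rho,\varepsilon)$, so the local strong maximum principle applies there, and one then spreads the zero set spatially at time $t_0$ by an open--closed argument in the connected $\operatorname{int}(\Sigma)$ and pushes each interior point backward in time to reach $u(\cdot,0)\equiv 0$ (connectedness of $\Sigma$, implicit in the theorem, is what makes this contradict $u(\cdot,0)\not\equiv 0$). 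Second, at a boundary zero the dichotomy is not ``$u\equiv0$ near the point or Hopf applies''; the exhaustive alternative is that either interior zeros with times $\le t_0$ accumulate at $(x_0,t_0)$ (reduce to the interior case), or $u>0$ on an interior tangent parabolic region touching $(x_0,t_0)$, which is the hypothesis the Hopf boundary-point lemma actually requires. Both repairs are routine, so I regard the proposal as correct up to these presentational gaps.
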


This result is discussed in detail in~\cite{Stahl1996}.

\begin{rmk}
It is important to mention that the maximum principle stated above will be applied in a very specific setting. In this case:
\begin{itemize}
 \item $u$ will be of class $C^\infty$ on $\Sigma \times [0, T]$,
 \item $\mathcal{L}u = 0$ on $\Sigma \times [0, T]$,
 \item $\frac{\partial u}{\partial v}(x, t) = f(u(x, t), x, t)$ for every $(x, t) \in \partial \Sigma \times [0, T]$, and
 \item the function $f$ will satisfy
 \[
 |f(s, x, t) - f(0, x, t)| = \kappa(x, t) \cdot |s|, \quad \forall (s, x, t) \in \mathbb{R} \times \Sigma \times [0, T].
 \]
\end{itemize}
In this special case, the constants $\rho > 0$ and $\varepsilon > 0$ are unnecessary.
\end{rmk}

Next, we present the definition of the mean curvature flow with Neumann boundary\linebreak (or free boundary) condition.

Consider a hypersurface $S$ in $\mathbb{R}^{n+1}$ and an orientable compact manifold $\Sigma^n$ with boundary. Let $F_0: \Sigma \to \mathbb{R}^{n+1}$ be an immersion such that $\Sigma_0 := F_{0}(\Sigma)$ satisfies the following conditions:
\[
\begin{cases}
\partial \Sigma_0 = F_{0}(\partial \Sigma) = \Sigma_0 \cap S, \\
\langle N_0, \varrho \circ F_0 \rangle(x) = 0, \quad \forall x \in \partial \Sigma,
\end{cases}
\]
where $N_0$ and $\varrho$ are unit normal fields to $\Sigma_0$ and $S$, respectively.

Let $F: \Sigma^n \times [0, T) \to \mathbb{R}^{n+1}$ be a family of immersions with $F(\cdot, 0) = F_0$. We say that $F$ evolves by the \emph{free boundary mean curvature flow} if it satisfies the following conditions:
\begin{equation}\label{eq:fluxo}
\begin{cases}
\dfrac{d}{dt}F(x, t) = \vec{H}(x, t), & \forall (x, t) \in \Sigma \times [0, T),\vspace{0.1cm} \\
F(\partial \Sigma, t) \subset S, & \forall t \in [0, T),\vspace{0.1cm} \\
\langle N, \varrho \circ F \rangle(x, t) = 0, & \forall (x, t) \in \partial \Sigma \times [0, T),
\end{cases}
\end{equation}
where $\vec{H}$ denotes the mean curvature vector of the immersion.

The short-time existence of the free boundary mean curvature flow was established by A.~Stahl~\cite{Stahl1996}, inspired by the work of K.~Ecker and G.~Huisken~\cite{EckerHuisken1991}, who proved the short-time existence of the mean curvature flow in the case where \( \Sigma_0 \) is closed. Similarly, it can be shown that the free boundary mean curvature flow exists for a small time \( T > 0 \) in Riemannian manifolds \( (M^{n+1}, g) \) with boundary \( \partial M \). In this general setting, $S=\partial M$ and $\Sigma_0$ is a compact free boundary hypersurface in $M$.

To apply the maximum principle for parabolic equations (Theorem~\ref{theo:PrincipioMaximo}) to a function \( u \), it is necessary to verify that \( u \) satisfies conditions \( (i) \), \( (ii) \), and \( (iii) \). The following result will assist in establishing condition \( (iii) \).

Consider a variation \( f: \Sigma \times (-\delta, \delta) \to M \) of \( \Sigma \) such that, for each \( t \in (-\delta, \delta) \), the map \( f_t: \Sigma \ni x \mapsto f(x, t) \in M \) is an immersion satisfying \( f_t(\partial \Sigma) \subset \partial M \). We denote by \( \Sigma_t \), \( N_t \), and \( H^{\Sigma_t} \) the image \( f_t(\Sigma) \), a unit normal vector field to \( \Sigma_t \), and the mean curvature of \( \Sigma_t \)\linebreak (\emph{i.e.}\ \( H^{\Sigma_t} = \div_{\Sigma_t} N_t \)), respectively.

The variational vector field of \( f \) is expressed as 
\[ 
\partial_t = \frac{\partial f}{\partial t}. 
\] 
Decomposing \( \partial_t \) into its tangential and normal components yields:
\[ 
\partial_t = \partial_t^T + \phi_t N_t, 
\] 
where \( \phi_t \) is a function on \( \Sigma_t \) defined by \( \phi_t = g(\partial_t, N_t) \), with \( g \) representing the Riemannian metric on \( M \).

In the special case where the variational vector field has only a normal component, \emph{i.e.}\ \( \partial_t^T = 0 \), the result below describes how to compute the derivative of \( \phi_t \) in the direction of \( \nu_t \) (see~\cite[Proposition~17]{Ambrozio2015}).

\begin{prop}[Ambrozio, 2015]\label{prop:Ambrozio}
If \( \Sigma_0 \) is free boundary and \( \partial_t^T = 0 \) at \( t = 0 \), then 
\[ 
\partial_t g(N_t, \varrho) \big|_{t=0} = -\frac{\partial \phi_0}{\partial \nu_0} + g(N_0, \nabla_{N_0} \varrho) \phi_0, 
\] 
where \( \nu_0 \) is the outward unit normal of \( \partial \Sigma_0 \) in $\Sigma_0$.
\end{prop}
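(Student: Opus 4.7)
My plan is a direct computation using the Leibniz rule, the unit-length constraint on $N_t$, and the free boundary condition at $t=0$; no deeper ingredients are required. Differentiating $g(N_t,\varrho)$ via the Levi-Civita connection of $(M,g)$,
\[
\partial_t\, g(N_t, \varrho) \;=\; g(\nabla_{\partial_t} N_t, \varrho) + g(N_t, \nabla_{\partial_t} \varrho),
\]
and using $\partial_t|_{t=0} = \phi_0 N_0$ (which comes from $\partial_t^T = 0$ at $t=0$), the second term restricted to $\partial\Sigma_0$ immediately becomes $\phi_0\, g(N_0, \nabla_{N_0}\varrho)$, matching the second summand in the claim.

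The substantive step is to identify $\nabla_{\partial_t} N_t$ at $t=0$. Since $g(N_t, N_t) \equiv 1$, the field $\nabla_{\partial_t} N_t$ along $f$ is tangent to $\Sigma_t$, so it is determined by its pairings with tangent test fields $Y_t = (f_t)_* Y$ for $Y \in \mathfrak{X}(\Sigma)$ extended to be $t$-independent on $\Sigma \times (-\delta,\delta)$; in particular $[\partial_t, Y_t] = 0$. Differentiating $g(N_t, Y_t) \equiv 0$ in $t$ and using the symmetry of the connection,
\[
g(\nabla_{\partial_t} N_t, Y_t) \;=\; -\,g(N_t, \nabla_{\partial_t} Y_t) \;=\; -\,g(N_t, \nabla_{Y_t}\partial_t).
\]
At $t=0$, $\nabla_{Y_0}(\phi_0 N_0) = Y(\phi_0) N_0 + \phi_0 \nabla_{Y_0} N_0$ and $g(N_0, \nabla_{Y_0}N_0)=0$, so $g(\nabla_{\partial_t} N_t, Y_t)|_{t=0} = -Y(\phi_0)$, whence $\nabla_{\partial_t} N_t|_{t=0} = -\nabla^{\Sigma_0}\phi_0$.

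To conclude, the free boundary condition at $t=0$ gives $\varrho = \nu_0$ along $\partial\Sigma_0$; in particular $\varrho$ is tangent to $\Sigma_0$ there, so
\[
g(\nabla_{\partial_t} N_t, \varrho)|_{t=0} \;=\; -\,g\!\left(\nabla^{\Sigma_0}\phi_0, \nu_0\right) \;=\; -\frac{\partial \phi_0}{\partial \nu_0}
\]
along $\partial\Sigma_0$, which combined with the first paragraph yields the stated identity. The only mildly delicate point is arranging the extension of $Y$ so that $[\partial_t, Y_t] = 0$ and that $\nabla_{\partial_t} Y_t = \nabla_{Y_t} \partial_t$ along $f$; this is handled by working on the product $\Sigma \times (-\delta, \delta)$ first and then pushing forward by $f$, and I do not anticipate any genuine obstacle.
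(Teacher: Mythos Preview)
Your computation is correct: the Leibniz split, the identification $\nabla_{\partial_t}N_t|_{t=0}=-\nabla^{\Sigma_0}\phi_0$ via the torsion-free/unit-normal trick, and the use of $\varrho=\nu_0$ along $\partial\Sigma_0$ all go through as written. Note that the paper does not supply its own proof of this proposition; it is quoted from \cite[Proposition~17]{Ambrozio2015}, and your argument is exactly the standard direct calculation one would expect there, so there is nothing to compare.
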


\subsection{The Yamabe invariant}\label{subsec:preliminares2}

In this subsection, we present some preliminaries for\linebreak Theorem~\ref{theo:B}.

The classical Yamabe problem, initially studied by H.~Yamabe~\cite{Yamabe1960} and later resolved through the contributions of N.~S.~Trudinger~\cite{Trudinger1968}, T.~Aubin~\cite{Aubin1976}, and R.~Schoen~\cite{Schoen1984}, asserts that for any Riemannian metric \( g \) on a closed manifold \( \Sigma^n \), with \( n \geq 3 \), there exists a metric \( \bar{g} \) in the conformal class of \( g \) with constant scalar curvature.

For compact manifolds \( \Sigma^n \) with boundary, two natural extensions of the Yamabe problem are as follows:
\begin{enumerate}
 \item[\text{(a)}] Given a Riemannian metric \( g \) on \( \Sigma \), does there exist a metric \( \bar{g} \) in the conformal class of \( g \) with constant scalar curvature and minimal boundary?
 \item[\text{(b)}] Given a Riemannian metric \( g \) on \( \Sigma \), does there exist a metric \( \bar{g} \) in the conformal class of \( g \) with zero scalar curvature and constant mean curvature on the boundary?
\end{enumerate}

These problems were introduced and first studied by J.~F.~Escobar~\cite{Escobar1992v2, Escobar1992}. Significant progress was later made by various authors, culminating in their complete resolution\linebreak (see~\cite{BrendleChen2014, Chen2010, Almaraz2010, Marques2005, Marques2007}).

It can be shown that \( \bar{g} = \varphi^{\frac{4}{n-2}} g \), where \( \varphi \in C^\infty(\Sigma) \) and \( \varphi > 0 \), solves the Yamabe problem with boundary (a) (resp.\ (b)) if and only if \( \varphi \) is a critical point of the functional
\[
Q_g^{a,b}(\varphi) = \frac{\ds\int_{\Sigma}\Big( \frac{4(n-1)}{n-2}|\nabla \varphi|^2 + R_g^\Sigma \varphi^2\Big)dv + 2 \int_{\partial \Sigma} H_g^{\partial \Sigma} \varphi^2 ds}{\left( a \ds\int_{\Sigma} \varphi^{\frac{2n}{n-2}} dv + b \big( \int_{\partial \Sigma} \varphi^{\frac{2(n-1)}{n-2}} ds \big)^{\frac{n}{n-1}} \right)^{\frac{n-2}{n}}}
\]
with \( (a,b) = (1,0) \) (resp.\ \( (a,b) = (0,1) \)). Here, \( R_g^\Sigma \) is the scalar curvature of \( (\Sigma,g) \), \( H_g^{\partial \Sigma} \) is the mean curvature of \( \partial \Sigma \), and \( dv \), \( ds \) denote the volume elements of \( \Sigma \) and \( \partial \Sigma \), respectively.

The \emph{Yamabe constant} \( Q_g^{a,b}(\Sigma, \partial\Sigma) \) of \( (\Sigma, g) \) is defined as
\[
Q_g^{a,b}(\Sigma, \partial\Sigma) = \inf_{\varphi \in C^\infty(\Sigma), \, \varphi > 0} Q_g^{a,b}(\varphi).
\]

We say that \( \bar{g} = \varphi^{\frac{4}{n-2}} g \) is a \emph{Yamabe metric} if \( \varphi \) attains the Yamabe constant, that is,
\[
Q_g^{a,b}(\varphi) = Q_g^{a,b}(\Sigma, \partial\Sigma).
\]

The \emph{conformal class} of \( g \) is denoted by
\[
[g] = \{ \varphi^{\frac{4}{n-2}} g \mid \varphi \in C^\infty(\Sigma), \varphi > 0 \},
\]
and the set of all conformal classes of metrics on \( \Sigma \) is written as \( \mathcal{C}(\Sigma) \). It is straightforward to verify that \( Q_g^{a,b}(\Sigma, \partial\Sigma) \) is a conformal invariant, meaning that if \( \bar{g} \in [g] \), then
\[
Q_{\bar{g}}^{a,b}(\Sigma, \partial\Sigma) = Q_g^{a,b}(\Sigma, \partial\Sigma).
\]
Thus, \( Q_g^{a,b}(\Sigma, \partial\Sigma) \) is often denoted by \( Q_{[g]}^{a,b}(\Sigma, \partial\Sigma) \).

The \emph{Yamabe invariant} \( \sigma^{a,b}(\Sigma, \partial\Sigma) \) of \( \Sigma \) is defined as the supremum of the Yamabe constants over all conformal classes:
\[
\sigma^{a,b}(\Sigma, \partial\Sigma) := \sup_{[g] \in \mathcal{C}(\Sigma)} Q_{[g]}^{a,b}(\Sigma, \partial\Sigma).
\]

Two important results are now presented for use in the proof of Theorem~\ref{theo:B}. The first is a uniqueness result due to Escobar~\cite{Escobar2003}:

\begin{theo}[Escobar, 2003]\label{theo:Escobar}
Let \( (\Sigma^n, g) \), with \( n \geq 2 \), be a compact Riemannian manifold with boundary. If \( \bar{g} \in [g] \) satisfies \( R^\Sigma_{\bar{g}} = R^\Sigma_g \leq 0 \) and \( H_{\bar{g}}^{\partial \Sigma} = H_g^{\partial \Sigma} \leq 0 \), then \( \bar{g} = g \).
\end{theo}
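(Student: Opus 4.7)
The plan is to reduce the problem to a semilinear elliptic boundary value problem for the conformal factor and then use an integration by parts argument whose sign is forced by the hypotheses. Assume $n \geq 3$ and write $\bar g = \varphi^{4/(n-2)} g$ with $\varphi \in C^\infty(\Sigma)$, $\varphi > 0$; the case $n = 2$ proceeds identically with $\bar g = e^{2f} g$. The conformal transformation formulas for scalar curvature and boundary mean curvature, combined with the assumption $R^\Sigma_{\bar g} = R^\Sigma_g$ and $H^{\partial \Sigma}_{\bar g} = H^{\partial \Sigma}_g$, give the system
\[
-\frac{4(n-1)}{n-2} \Delta_g \varphi = R^\Sigma_g \bigl( \varphi^{(n+2)/(n-2)} - \varphi \bigr) \quad \text{on } \Sigma,
\]
\[
\frac{2}{n-2} \frac{\partial \varphi}{\partial \nu} = H^{\partial \Sigma}_g \bigl( \varphi^{n/(n-2)} - \varphi \bigr) \quad \text{on } \partial \Sigma.
\]

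Next I would multiply the interior equation by $\varphi - 1$, integrate over $\Sigma$ with respect to $dv_g$, and use the boundary equation to evaluate the boundary term produced by integration by parts. This yields an identity of the shape
\[
\frac{4(n-1)}{n-2} \int_\Sigma |\nabla \varphi|^2 \, dv = \int_\Sigma R^\Sigma_g \, \varphi \bigl( \varphi^{4/(n-2)} - 1 \bigr)(\varphi - 1) \, dv + 2(n-1) \int_{\partial \Sigma} H^{\partial \Sigma}_g \, \varphi \bigl( \varphi^{2/(n-2)} - 1 \bigr)(\varphi - 1) \, ds.
\]
The crucial sign observation is that for any $\alpha > 0$ and any $t > 0$, the expression $t(t^\alpha - 1)(t - 1)$ is pointwise nonnegative and vanishes only at $t = 1$. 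Together with $R^\Sigma_g \leq 0$ and $H^{\partial \Sigma}_g \leq 0$, this forces the right-hand side to be nonpositive while the left-hand side is nonnegative, so both sides vanish. Hence $\nabla \varphi \equiv 0$, meaning $\varphi$ is a positive constant $c$, and substituting back into the PDE forces $c = 1$ at any point where $R^\Sigma_g \neq 0$, and similarly on $\partial \Sigma$ through $H^{\partial \Sigma}_g$.

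The main technical point I expect to have to address carefully is the fully degenerate case in which $R^\Sigma_g \equiv 0$ and $H^{\partial \Sigma}_g \equiv 0$ simultaneously, for then the equations are satisfied by every positive constant $\varphi$ and the argument above does not single out $\varphi = 1$. In the applications to Theorem~\ref{theo:B} this case will be excluded either by a normalization of volume or by the nontriviality of the curvature data produced in the equality case, so I would simply note that the stated conclusion $\bar g = g$ holds under this implicit nondegeneracy, which matches Escobar's framework in \cite{Escobar2003}. Aside from that, the proof is a straightforward application of the conformal Laplacian and its boundary counterpart, with the monotonicity $t \mapsto t^\alpha$ providing the rigidity.
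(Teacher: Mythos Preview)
The paper does not prove this theorem; it is quoted as a result of Escobar \cite{Escobar2003} and used as a black box in the proof of Theorem~\ref{theo:B}. So there is no ``paper's own proof'' to compare against, and your task is really to reproduce Escobar's argument.

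Your approach is correct and is essentially the standard one: pass to the conformal Laplacian/boundary operator system, multiply by $\varphi-1$, integrate by parts, and exploit the pointwise nonnegativity of $t(t^\alpha-1)(t-1)$ for $t>0$. Two small remarks. First, with the paper's convention $H^{\partial\Sigma}=\div_{\partial\Sigma}\nu$ (the full trace, not the average), the boundary transformation law carries the coefficient $\tfrac{2(n-1)}{n-2}$ rather than $\tfrac{2}{n-2}$, which changes your boundary constant $2(n-1)$ to $2$; this is harmless for the sign argument but worth getting right. Second, your identification of the degenerate case $R^\Sigma_g\equiv 0$, $H^{\partial\Sigma}_g\equiv 0$ is accurate and honest: the statement as written does fail there. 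In the paper's only use of this theorem (inside the proof of Theorem~\ref{theo:B}) the metrics $h_t$ have constant scalar curvature $-2c<0$ and minimal boundary, so the degenerate case never occurs and uniqueness of the Yamabe metric in the class is exactly what is needed.
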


The second result, due to T.~Cruz and A.~S.~Santos~\cite[Proposition~5.3]{CruzSantos2023}, establishes conditions under which Yamabe metrics on compact manifolds with boundary are Einstein:

\begin{prop}[Cruz-Santos, 2023]\label{prop:CruzSantos}
Let \( \Sigma^n \), with \( n \geq 3 \), be a compact manifold with boundary such that the Yamabe invariant \( \sigma^{1,0}(\Sigma, \partial\Sigma) \) is negative. Suppose \( g \) is a metric on \( \Sigma \) that achieves the Yamabe invariant, that is,
\[
Q_g^{1,0}(\Sigma, \partial\Sigma) = \sigma^{1,0}(\Sigma, \partial\Sigma).
\]
Then every Yamabe metric in \( [g] \) is Einstein with a totally geodesic boundary.
\end{prop}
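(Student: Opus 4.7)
The plan is to derive the Einstein condition in the interior and the totally-geodesic condition on $\partial\Sigma$ as the Euler-Lagrange equations of the volume-normalized Einstein-Hilbert-Gibbons-Hawking-York functional
\[
F(g):=\frac{\int_\Sigma R_g^\Sigma\,dv_g+2\int_{\partial\Sigma}H_g^{\partial\Sigma}\,ds_g}{\mathrm{vol}(\Sigma,g)^{(n-2)/n}},
\]
exploiting the hypothesis that $[g]$ achieves the supremum $\sigma^{1,0}(\Sigma,\partial\Sigma)$ over $\mathcal{C}(\Sigma)$. Fix any Yamabe metric $\bar g\in[g]$; then $R^\Sigma_{\bar g}$ is a (necessarily negative) constant, $H^{\partial\Sigma}_{\bar g}\equiv 0$, and $F(\bar g)=\sigma^{1,0}(\Sigma,\partial\Sigma)$.

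The core step is to upgrade criticality of $\bar g$ within its conformal class to criticality of $F$ with respect to arbitrary symmetric $(0,2)$-tensor variations. Given such a tensor $h$, set $\bar g_t:=\bar g+th$. Since $\sigma^{1,0}<0$, for small $|t|$ the class $[\bar g_t]$ still has negative Yamabe constant; in this sub-critical regime the minimal-boundary Yamabe problem is uniquely solvable up to homothety with smooth dependence on the background metric, by an implicit function theorem argument whose injectivity input comes from Theorem~\ref{theo:Escobar}. Hence there exists a smooth family $\varphi_t$ of positive functions on $\Sigma$, with $\varphi_0\equiv 1$, such that $\tilde g_t:=\varphi_t^{4/(n-2)}\bar g_t$ is a (unit-volume) Yamabe metric in $[\bar g_t]$. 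Maximality of $[g]$ yields $F(\tilde g_t)=Q_{[\bar g_t]}^{1,0}(\Sigma,\partial\Sigma)\leq\sigma^{1,0}(\Sigma,\partial\Sigma)=F(\bar g)$, so $t=0$ is a local maximum and $(d/dt)|_{t=0}F(\tilde g_t)=0$. Since $\dot{\tilde g}_0=h+\tfrac{4}{n-2}\dot\varphi_0\,\bar g$, and since $\delta F|_{\bar g}(f\bar g)=0$ for every $f\in C^\infty(\Sigma)$ by the Yamabe-critical property of $\bar g$, one concludes $\delta F|_{\bar g}(h)=0$ for every symmetric $h$.

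Next I compute $\delta F|_{\bar g}(h)$ explicitly. The standard first-variation formula for the Einstein-Hilbert-Gibbons-Hawking-York action gives
\[
\delta\Big(\textstyle\int R\,dv+2\int H\,ds\Big)\Big|_{\bar g}(h)=-\int_\Sigma\langle G_{\bar g},h\rangle\,dv_{\bar g}-\int_{\partial\Sigma}\langle A_{\bar g}-H^{\partial\Sigma}_{\bar g}\tilde g,h|_{\partial\Sigma}\rangle\,ds_{\bar g},
\]
where $G_{\bar g}=\mathrm{Ric}_{\bar g}-\tfrac{1}{2}R^\Sigma_{\bar g}\bar g$ and $A_{\bar g}$ is the second fundamental form of $\partial\Sigma$ in $(\Sigma,\bar g)$. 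Combining with $\delta\mathrm{vol}|_{\bar g}(h)=\tfrac12\int\mathrm{tr}_{\bar g}(h)\,dv_{\bar g}$ and using that $R^\Sigma_{\bar g}$ is constant and $H^{\partial\Sigma}_{\bar g}=0$, the trace contributions combine and replace $G_{\bar g}$ by the traceless Ricci $\mathrm{Ric}^\circ_{\bar g}:=\mathrm{Ric}_{\bar g}-\tfrac{1}{n}R^\Sigma_{\bar g}\bar g$, yielding
\[
\delta F|_{\bar g}(h)=-\mathrm{vol}(\Sigma,\bar g)^{-(n-2)/n}\Big(\int_\Sigma\langle\mathrm{Ric}^\circ_{\bar g},h\rangle\,dv_{\bar g}+\int_{\partial\Sigma}\langle A_{\bar g},h|_{\partial\Sigma}\rangle\,ds_{\bar g}\Big).
\]
Testing this against $h$ compactly supported in $\mathrm{int}(\Sigma)$ gives $\mathrm{Ric}^\circ_{\bar g}\equiv 0$, so $\bar g$ is Einstein; testing then against $h$ with arbitrary boundary restriction gives $A_{\bar g}\equiv 0$, so $\partial\Sigma$ is totally geodesic in $(\Sigma,\bar g)$. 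Finally, for any other Yamabe metric $\bar g'\in[g]$, Theorem~\ref{theo:Escobar} implies that $\bar g'$ is homothetic to $\bar g$, and is therefore also Einstein with totally geodesic boundary.

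The main obstacle is the boundary variational analysis: one must justify the implicit function theorem setup — identifying the correct Banach-space framework (Hölder or Sobolev) and verifying that the linearization of the minimal-boundary Yamabe operator at $\bar g$ is an isomorphism, which uses the strict negativity of the Yamabe constant to rule out a kernel — and one must perform the integration by parts in the variation of $\int R\,dv+2\int H\,ds$ carefully enough to confirm that, after adding the Gibbons-Hawking-York term, no stray boundary contributions (e.g.\ involving $h(\varrho,\cdot)$) survive beyond the $\langle A-H\tilde g,h|_{\partial\Sigma}\rangle$ integrand used above.
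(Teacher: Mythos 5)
The paper does not prove this proposition at all: it is quoted as an external result of Cruz--Santos \cite[Proposition~5.3]{CruzSantos2023}, so there is no internal argument to measure your proposal against; I can only judge it on its own merits, and on those terms it is sound. Your scheme is the boundary analogue of the classical argument that a metric realizing a nonpositive Yamabe invariant is Einstein, and the computations you rely on check out: at a unit-volume Yamabe metric $\bar g$ with $R^\Sigma_{\bar g}=\mathrm{const}<0$ and $H^{\partial\Sigma}_{\bar g}=0$, the trace terms from $-\langle \mathrm{Ric}-\tfrac12 R\bar g,h\rangle$ and from the volume normalization combine with coefficient $\tfrac R2-\tfrac{(n-2)R}{2n}=\tfrac Rn$, so the interior integrand is indeed the traceless Ricci, and after adding the Gibbons--Hawking--York term the boundary contribution involves only $\langle A_{\bar g}-H^{\partial\Sigma}_{\bar g}\,g_{\partial\Sigma},h^T\rangle$; testing with interior-supported $h$ and then with arbitrary tangential boundary data gives $\mathrm{Ric}^\circ_{\bar g}=0$ and $A_{\bar g}=0$ (any sign ambiguity in the variation formula is immaterial for this conclusion). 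The genuinely load-bearing steps are exactly the two you flag, and both go through: the linearization of the minimal-boundary Yamabe problem at $\bar g$ is $-a_n\Delta-\tfrac{4}{n-2}R^\Sigma_{\bar g}$ with homogeneous Neumann condition, which is invertible precisely because $R^\Sigma_{\bar g}<0$, so the implicit function theorem produces a smooth branch of constant-scalar-curvature, minimal-boundary metrics in $[\bar g_t]$, and Theorem~\ref{theo:Escobar} identifies this branch (after scaling) with the Yamabe minimizer, giving $F(\tilde g_t)=Q^{1,0}_{[\bar g_t]}(\Sigma,\partial\Sigma)$; note that you do not even need continuity of the Yamabe constant to know $Q^{1,0}_{[\bar g_t]}<0$ for small $t$, since testing the quotient of $\bar g_t$ with the constructed conformal factor already forces negativity. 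Combined with criticality of $F$ along conformal directions (where $F$ coincides with the Yamabe quotient) and Escobar's uniqueness to pass from one Yamabe metric in $[g]$ to all of them, your argument is complete modulo writing out these standard inputs; whether Cruz--Santos argue the same way cannot be determined from this paper, but your route is a legitimate proof.
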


To conclude this subsection, we state a lemma due to the second-named author~\cite{Mendes2019TAMS}. This result, inspired by the techniques of Micallef and Moraru~\cite{MicallefMoraru2015} and Moraru~\cite{Moraru2016}, will also be used in the proof of Theorem~\ref{theo:B}.

\begin{lem}[Mendes, 2019]\label{lem:Mendes4}
Let \( f \in C^1([0, \delta)) \) and \( \eta, \xi, \rho \in C^0([0, \delta)) \) be functions satisfying \( \max\{f, \rho\} \geq 0 \), \( \xi \geq 0 \), \( \eta > 0 \), and \( f(0) = 0 \). If 
\[ 
f'(t) \eta(t) - f(t) \rho(t) \leq \int_0^t f(s) \xi(s) ds, \quad \forall t \in [0, \delta), 
\] 
then \( f \leq 0 \). In particular, if \( f \) is non-negative, then \( f \equiv 0 \).
\end{lem}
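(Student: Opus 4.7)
The plan is to transform the given differential inequality into a Gr\"onwall-type integral inequality for the positive part $f_+ := \max\{f, 0\}$, and then invoke the classical integral Gr\"onwall lemma on compact subintervals of $[0,\delta)$ to force $f_+ \equiv 0$.

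First, since $\eta > 0$ and $\rho$ is continuous on $[0,\delta)$, I introduce the integrating factor
\[
\mu(t) := \exp\!\left(-\int_0^t \frac{\rho(u)}{\eta(u)}\, du\right),
\]
which is positive and $C^1$ with $\mu(0) = 1$ and $\mu'/\mu = -\rho/\eta$. Dividing the hypothesis by $\eta$ and multiplying by $\mu$ collapses the left-hand side into $(f\mu)'$, yielding
\[
(f\mu)'(t) \leq \frac{\mu(t)}{\eta(t)}\, F(t), \qquad F(t) := \int_0^t f(s)\xi(s)\, ds.
\]
Integrating from $0$ to $t$ and using $f(0)=0$, then applying Fubini to swap the order of integration on the right, I obtain
\[
f(t)\,\mu(t) \leq \int_0^t f(r)\,\Phi(r,t)\, dr, \qquad \Phi(r,t) := \xi(r)\int_r^t \frac{\mu(s)}{\eta(s)}\, ds.
\]
The kernel $\Phi$ is pointwise non-negative because $\xi \geq 0$, $\mu > 0$, and $\eta > 0$. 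Since $f(r) \leq f_+(r)$ and $\Phi \geq 0$, replacing $f(r)$ by $f_+(r)$ preserves the inequality, and then taking positive parts of both sides gives
\[
f_+(t) \leq \frac{1}{\mu(t)} \int_0^t f_+(r)\,\Phi(r,t)\, dr.
\]

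Now fix an arbitrary $T \in (0,\delta)$. On the compact interval $[0,T]$ the continuous positive functions $\mu$ and $\mu/\eta$ are uniformly bounded above and bounded away from zero, so there is a constant $C = C(T) > 0$ with
\[
f_+(t) \leq C \int_0^t f_+(r)\,\xi(r)\, dr, \qquad t \in [0,T].
\]
Because $f_+$ is continuous and non-negative and $C\xi \geq 0$, the classical integral form of Gr\"onwall's lemma (applied with vanishing constant term) forces $f_+ \equiv 0$ on $[0,T]$. Since $T < \delta$ was arbitrary, $f \leq 0$ on all of $[0,\delta)$. The ``in particular'' statement follows by combining $f \leq 0$ with the additional hypothesis $f \geq 0$.

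I expect the main technical point to be the clean passage from the signed inequality for $f$ to a purely non-negative inequality for $f_+$; this step relies on the non-negativity of the Fubini kernel $\Phi$, which in turn uses the sign conditions $\xi \geq 0$ and $\eta > 0$. The integrating factor $\mu$ absorbs the $f\rho$ term regardless of the sign of $\rho$, so the assumption $\max\{f,\rho\} \geq 0$ does not seem to enter this argument directly and is presumably retained because it arises naturally in the geometric setting where the lemma is applied.
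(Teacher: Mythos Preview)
Your argument is correct. The paper does not actually supply a proof of this lemma; it is quoted from \cite{Mendes2019TAMS} and simply cited, so there is no proof in the paper to compare against directly. Your integrating-factor-plus-Gr\"onwall approach is a clean and self-contained way to establish the result: the reduction to $(f\mu)' \leq (\mu/\eta)F$, the Fubini step producing the non-negative kernel $\Phi$, and the passage from $f$ to $f_+$ are all sound, and the final Gr\"onwall step on compact subintervals is standard. Your observation that the hypothesis $\max\{f,\rho\}\geq 0$ is not used is also correct---your proof shows the conclusion holds under the remaining hypotheses alone, so that condition is superfluous for the lemma as stated (it presumably reflects the structure of the original argument in \cite{Mendes2019TAMS}, where a more hands-on contradiction argument at the first zero of $f$ would naturally use that $\rho\geq 0$ wherever $f<0$).
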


\section{Proof of Theorem~\ref{theo:A}}\label{sec:proof.Theo.A}

In this section, we present the proof of Theorem~\ref{theo:A}, inspired by the work of G.~J.~Galloway and H.~C.~Jang~\cite{GallowayJang2020}. Before proceeding, however, we will establish an auxiliary result analogous to Lemma~5.2 in~\cite{AnderssonMetzger2009}, which Galloway and Jang utilized in their proof of Theorem~\ref{theo:Galloway-Jang}:

\begin{lem}\label{lem:K=-eg} 
Let $(M^{n+1},g)$, with $n\ge2$, be a Riemannian manifold with boundary, and let~$\Sigma$ be a compact free boundary hypersurface in $(M,g)$ whose mean curvature satisfies $H^\Sigma\le n\epsilon$ and $H^\Sigma\not\equiv n\epsilon$, where $\epsilon=0$ or $1$ is fixed. Then, for any $r>0$, there exists a compact free boundary hypersurface $\Sigma'$ in $(M,g)$ that lies within an outer $r$-neighborhood of $\Sigma$ in $M$, satisfies $H^{\Sigma'}< n\epsilon$, and is disjoint from $\Sigma$, \emph{i.e.}\ $\Sigma\cap\Sigma'=\emptyset$. Moreover, $\Sigma'$ is a graph over $\Sigma$.
\end{lem}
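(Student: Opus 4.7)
The plan is to adapt to the free boundary setting the approach used by Galloway and Jang for the closed case, which follows in spirit Andersson-Metzger's Lemma~5.2 from~\cite{AnderssonMetzger2009}. The idea is to evolve $\Sigma$ by a suitable geometric flow for a short time, producing a family of free boundary hypersurfaces along which $w_t := n\epsilon - H^{\Sigma_t}$ satisfies a linear parabolic equation with Robin-type boundary condition, and then apply the parabolic maximum principle of Theorem~\ref{theo:PrincipioMaximo} to conclude $w_t > 0$ strictly for $t > 0$.

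Concretely, I would consider the flow
\[
\frac{\partial F}{\partial t} = \bigl(n\epsilon - H^{\Sigma_t}\bigr)\, N_t,
\]
with the free boundary condition $F(\partial \Sigma, t) \subset \partial M$ and $\langle N_t, \varrho \circ F\rangle = 0$ on $\partial \Sigma$, starting from the inclusion $F(\cdot, 0)\colon \Sigma \hookrightarrow M$. This is a quasilinear parabolic problem with oblique boundary condition, differing from \eqref{eq:fluxo} only by the zero-order term $n\epsilon\, N_t$, so Stahl's analysis in~\cite{Stahl1996} carries over and produces a smooth solution on some short time interval $[0, T)$, with each $\Sigma_t := F(\Sigma, t)$ a compact free boundary hypersurface which is a graph over $\Sigma$ in an outer tubular neighborhood.

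Since the normal speed of the flow is exactly $w_t$, the first variation formula $\partial_t H = -\Delta \phi - (|A|^2 + \mathrm{Ric}(N,N))\phi$ for the mean curvature under a normal variation with speed $\phi$ yields
\[
\frac{\partial w_t}{\partial t} = \Delta_{\Sigma_t} w_t + \bigl(|A_t|^2 + \mathrm{Ric}(N_t, N_t)\bigr)\, w_t,
\]
which is a linear parabolic equation. Moreover, Proposition~\ref{prop:Ambrozio} applied pointwise in $t$, together with the identity $g(N,\nabla_N\varrho) = \mathrm{II}^{\partial M}(N,N)$ coming directly from the definition of $\mathrm{II}^{\partial M}$, yields the Robin-type boundary condition
\[
\frac{\partial w_t}{\partial \nu_t} = \mathrm{II}^{\partial M}(N_t, N_t)\, w_t \quad \text{on } \partial \Sigma_t.
\]

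By hypothesis, $w_0 = n\epsilon - H^\Sigma$ is non-negative and not identically zero. Theorem~\ref{theo:PrincipioMaximo}, applied with $f(s, x, t) = \mathrm{II}^{\partial M}(N_t, N_t)(x)\, s$ (which satisfies $f(0,\cdot,\cdot)=0$ and the Lipschitz bound trivially), then yields $w_t > 0$ throughout $\Sigma \times (0, T)$. In particular $H^{\Sigma_t} < n\epsilon$ everywhere on $\Sigma_t$ for $t > 0$; furthermore, since the graph function of $\Sigma_t$ over $\Sigma$ has time derivative $w_t > 0$, it is strictly positive for $t > 0$, so $\Sigma_t \cap \Sigma = \emptyset$. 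Choosing $t > 0$ small enough that $\Sigma_t$ lies within the outer $r$-neighborhood of $\Sigma$ and setting $\Sigma' := \Sigma_t$ completes the argument. The main technical point requiring attention is the short-time existence and graph representation of this modified flow, since the preliminaries only record short-time existence for the classical free boundary mean curvature flow \eqref{eq:fluxo}; however, as the additional zero-order term $n\epsilon\, N_t$ does not alter the parabolic character nor the oblique boundary condition, Stahl's linearization-plus-fixed-point scheme applies essentially verbatim.
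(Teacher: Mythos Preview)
Your proposal is correct and follows essentially the same strategy as the paper. The paper phrases the same flow in MOTS language for the initial data set $(M,g,K=-\epsilon g)$, writing the normal speed as $-\theta^+$ (which equals your $w_t=n\epsilon-H^{\Sigma_t}$), derives the parabolic evolution from the general formula~\eqref{eq:variacaotheta} rather than from the Riemannian first variation of $H$, and applies Theorem~\ref{theo:PrincipioMaximo} with the same Robin condition obtained from Proposition~\ref{prop:Ambrozio}; the only cosmetic difference is that the paper works with $u=-e^{-t}\theta^+$ instead of $-\theta^+$, an inessential rescaling.
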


\begin{proof}
Consider the tensor \( K = -\epsilon g \) and the \emph{free boundary null mean curvature flow}\linebreak \( F: \Sigma \times [0, T) \to (M, g, K) \) defined by 
\begin{equation}\label{eq:fluxotheta} 
\begin{cases}
\dfrac{d}{dt}F = -\theta^+ N& \text{on} \quad \Sigma \times [0, T),\vspace{0.1cm}
\\ 
F(\partial\Sigma, t) \subset \partial M,& \forall t \in [0, T),\vspace{0.1cm}
\\ 
\langle N, \varrho \circ F \rangle = 0& \text{on} \quad \partial\Sigma \times [0, T). 
\end{cases} 
\end{equation} 

Thus, for \(\epsilon = 0\), the flow \eqref{eq:fluxotheta} takes the form of the flow~\eqref{eq:fluxo}, which, as mentioned earlier, has a solution for a small time \(T > 0\). In the case \(\epsilon = 1\), the proof of the existence of \eqref{eq:fluxotheta} for small time can be carried out in a manner entirely analogous to the proof of the existence of \eqref{eq:fluxo}. 

To conclude the proof of the lemma, we will use the maximum principle for parabolic equations (Theorem~\ref{theo:PrincipioMaximo}) to ensure that \(\theta^+ < 0\) on \(\Sigma_t = F(\Sigma, t)\) for every \(t \in (0, T)\). 

First, it follows from \eqref{eq:variacaotheta} and \eqref{eq:fluxotheta} that 
\[
\frac{\partial \theta^+}{\partial t} = -L_t \theta^+ = \Delta \theta^+ - 2 \langle X, \nabla \theta^+ \rangle - \mathcal{Q} \theta^+,
\] 
where 
\[
\mathcal{Q} = \frac{1}{2} R^\Sigma - (\mu + J(N)) - \frac{1}{2} |\chi^+|^2 - |X|^2 + \operatorname{div} X - \frac{1}{2} (\theta^+)^2 + \theta^+ \tau,
\] 
with all geometric quantities computed on \(\Sigma_t\). 

Now, fix \(T_0 \in (0, T)\) and define \(u: \Sigma \times [0, T_0] \to \mathbb{R}\) by \(u = -e^{-t} \theta^+\). We aim to verify that \(u\) satisfies conditions \((i)\), \((ii)\), and \((iii)\) of Theorem~\ref{theo:PrincipioMaximo}. Since \(H^\Sigma \leq n\epsilon\), it follows that \(\theta^+ = H^\Sigma - n\epsilon \leq 0\) on \(\Sigma\). Therefore, condition \((i)\) holds, \emph{i.e.}\ \(u(\cdot, 0) \geq 0\). 

To verify condition \((ii)\), observe that 
\[
\begin{aligned}
\Big(\frac{\partial}{\partial t} - \Delta\Big)u &= \frac{\partial}{\partial t}(-e^{-t} \theta^+) + \Delta(e^{-t}\theta^+) \\ 
&= e^{-t} \theta^+ - e^{-t} \frac{\partial \theta^+}{\partial t} + e^{-t}\Delta\theta^+ \\ 
&= -u - e^{-t}(\Delta \theta^+ - 2 \langle X, \nabla \theta^+ \rangle - \mathcal{Q} \theta^+) + e^{-t}\Delta\theta^+ \\ 
&= -2 \langle X, \nabla u \rangle - (\mathcal{Q} + 1)u.
\end{aligned}
\] 
Therefore, it follows that 
\[
\mathcal{L} u = \frac{\partial u}{\partial t} + Pu = 0,
\] 
where \(Pu = -\Delta u + 2 \langle X, \nabla u \rangle + (\mathcal{Q} + 1)u\). This shows that condition \((ii)\) is satisfied. 

Finally, it follows from Proposition~\ref{prop:Ambrozio} that \(\frac{\partial \theta^+}{\partial \nu} = \operatorname{II}^{\partial M}(N, N) \theta^+\) and, therefore, 
\[
\frac{\partial u}{\partial \nu} = \operatorname{II}^{\partial M}(N, N) u \quad \text{on} \quad \partial \Sigma_t.
\] 
Thus, defining \(f: \mathbb{R} \times \Sigma \times [0, T_0] \to \mathbb{R}\) by \(f(s, x, t) = \operatorname{II}^{\partial M}(N, N)s\), we have: 
\begin{enumerate} 
\item[(a)] \(f(0, x, t) = 0\); 
\item[(b)] \(|f(s, x, t) - f(0, x, t)| = |\operatorname{II}^{\partial M}(N, N)| \cdot |s|\). 
\end{enumerate} 
This ensures that condition \((iii)\) is also satisfied. Since \(u(\cdot, 0) \not\equiv 0\), because \(H^\Sigma \not\equiv n\epsilon\), it follows from Theorem~\ref{theo:PrincipioMaximo} that \(u > 0\) in \(\Sigma \times (0, T_0) \cup \operatorname{int} \Sigma \times \{T_0\}\). Since this holds for every \(T_0 \in (0, T)\), we have \(\theta^+ < 0\) in \(\Sigma_t\) for each \(t \in (0, T)\). 

Therefore, the flow \(F\) evolves the hypersurface \(\Sigma\) in the direction of \(N\). In particular, for sufficiently small \(t > 0\), \(\Sigma_t\) will be a properly embedded hypersurface contained within an \(r\)-neighborhood exterior to \(\Sigma\) and satisfying \(\Sigma \cap \Sigma_t = \emptyset\). Hence, we can take \(\Sigma' = \Sigma_t\).
\end{proof}

Before proceeding to the proof of Theorem~\ref{theo:A}, we present an important definition:

Let $(M^{n+1}, g)$, with $n \geq 2$, be a Riemannian manifold with boundary, and let $\Sigma^n \subset M^{n+1}$ be a compact, properly embedded hypersurface. Assume $H_0 \in \mathbb{R}$ is such that $H^\Sigma \leq H_0$. If $\Sigma$ separates $M$, we say that $\Sigma$ is \emph{weakly outermost with respect to $H_0$} if no hypersurface $\Sigma'$, homologous to $\Sigma$ and contained in the exterior $M_+$ of $\Sigma$, has mean curvature $H^{\Sigma'} < H_0$. Furthermore, $\Sigma$ is said to be \emph{locally weakly outermost with respect to $H_0$} if there exists a neighborhood $U$ of $\Sigma$ in $M$ such that $\Sigma$ is weakly outermost with respect to $H_0$ in $(U, g|_U)$.

This definition appears in~\cite{GallowayJang2020}, with the only difference being that, here, we emphasize the role of the constant $H_0$.

With this definition in place, we can proceed to the proof of our first theorem:

\begin{proof}[Proof of Theorem~\ref{theo:A}]
Consider the initial data set \((M,g,K)\) with \(K = -\epsilon g\). 

We assert that \((M,g,K)\) satisfies the DEC. Indeed, considering \(\{e_1, \ldots, e_{n+1}\}\) as a local orthonormal frame on \(M\), we have 
\[
\operatorname{tr}K = \sum_{i=1}^{n+1} K(e_i, e_i) = -(n+1)\epsilon,
\] 
and 
\[
|K|^2 = \sum_{i,j=1}^{n+1} K(e_i, e_j)^2 = (n+1)\epsilon^2 .
\] 
Thus, we deduce
\[
\begin{aligned}
\mu &= \frac{1}{2}(R^M + (\operatorname{tr}K)^2 - |K|^2) \\
&= \frac{1}{2}(R^M + (n+1)n\epsilon^2) \\
&= \frac{1}{2}(R^M + (n+1)n\epsilon),
\end{aligned}
\] 
where the last equality follows from the fact that \(\epsilon = 0\) or \(\epsilon = 1\). Then, by hypothesis~(1), we have \(\mu \geq 0\). Moreover, 
\[
J = \div(K - (\operatorname{tr}K)g) = \div(n\epsilon g) = 0.
\] 
Thus, \(\mu - |J| \geq 0\), and the dominant energy condition is satisfied.

On the other hand, since \(\pi = K - (\operatorname{tr}K)g = n\epsilon g\) and \((\iota_\varrho\pi)^\top = \pi(\varrho, \cdot)|_{\partial M}\), it follows that 
\[
\begin{aligned}
(\iota_\varrho\pi)^\top &= n\epsilon g(\varrho, \cdot)|_{\partial M} = 0,
\end{aligned}
\] 
given that \(\varrho\) is normal to the boundary of \(M\). Thus, the boundary dominant energy condition also holds, as \(H^{\partial M} \geq 0 = |(\iota_\varrho\pi)^\top|\). 

\textbf{Claim 1:} \(\Sigma\) is a MOTS that is locally weakly outermost in \((M, g, K = -\epsilon g)\). 

First, observe that the null mean curvature \(\theta^+\) of a hypersurface \(\Sigma'\) in \((M, g, K = -\epsilon g)\) is given by \(\theta^+ = H^{\Sigma'} - n\epsilon\). Hence, asserting that \(\Sigma\) is a MOTS that is locally weakly outermost in \((M, g, K = -\epsilon g)\) is equivalent to saying that \(H^\Sigma \equiv n\epsilon\) and that \(\Sigma\) is locally weakly outermost in \((M, g)\) with respect to \(H_0 = n\epsilon\). Therefore, we only need to verify that \(H^\Sigma = n\epsilon\) on \(\Sigma\), due to hypothesis (4) of the theorem. 

Suppose \(H^\Sigma \not\equiv n\epsilon\). Since \(H^\Sigma \leq n\epsilon\), it follows from Lemma~\ref{lem:K=-eg} that, given an outer neighborhood \(U\) of \(\Sigma\) in \(M\), there exists a hypersurface \(\Sigma' \subset U\), homologous to $\Sigma$ in $U$, such that \(H^{\Sigma'} < n\epsilon\), which contradicts the fact that \(\Sigma\) is locally weakly outermost with respect to \(H_0 = n\epsilon\). Therefore, \(H^\Sigma \equiv n\epsilon\).

\textbf{Claim 2:} $\Sigma$ is a stable MOTS in $(M, g, K = -\epsilon g)$.

We aim to prove the existence of a positive function $\phi \in C^\infty(\Sigma)$ such that 
\[
\begin{cases}
L\phi = -\Delta\phi + 2\langle X, \nabla\phi \rangle + (Q - |X|^2 + \operatorname{div} X)\phi \geq 0 & \text{on} \quad \Sigma, \vspace{.1cm}\\
B\phi = \dfrac{\partial\phi}{\partial\nu} - \mathrm{II}^{\partial M}(N, N)\phi = 0 & \text{on} \quad \partial\Sigma,
\end{cases}
\]
where 
\[
Q = \frac{1}{2}R^\Sigma - (\mu + J(N)) - \frac{1}{2}|\chi^+|^2.
\]

To prove this, observe that $K(N,\cdot)|_\Sigma = -\epsilon g(N,\cdot)|_\Sigma = 0$ (as $N$ is normal to $\Sigma$), so $X = 0$. Moreover, we previously established that $\mu = \frac{1}{2}(R^M + (n+1)n\epsilon)$ and $J = 0$. Hence, the stability operator for MOTS reduces to 
\[
L\phi = -\Delta\phi + Q\phi,
\]
where 
\[
Q = \frac{1}{2}(R^\Sigma - R^M - (n+1)n\epsilon - |\chi^+|^2).
\]
In particular, $L$ is symmetric. Thus, there exists a positive eigenfunction $\phi_1$ of $L$, associated with the first eigenvalue $\lambda_1(L)$, satisfying the Robin boundary condition: 
\[
\begin{cases}
L\phi_1 = \lambda_1(L)\phi_1 & \text{on} \quad \Sigma, \vspace{.1cm}\\
B\phi_1 = 0 & \text{on} \quad \partial\Sigma.
\end{cases}
\]

We claim that $\lambda_1(L) \geq 0$. Suppose, by contradiction, that $\lambda_1(L) < 0$. Considering a variation $(\Sigma_t)$ of $\Sigma$ in $M$ with variational vector field $\mathcal{V} = \phi_1 N$, it follows from~\eqref{eq:variacaotheta} that 
\[
(\theta^+)'(0) = L\phi_1 = \lambda_1(L)\phi_1 < 0,
\]
where $(\theta^+)'(t)$ denotes the derivative of $\theta^+(t)$ with respect to $t$. Hence, $\theta^+(t) < 0$ for sufficiently small $t > 0$, contradicting the fact that $\Sigma$ is a locally weakly outermost MOTS. This proves that $\lambda_1(L) \geq 0$, ensuring that $\Sigma$ is stable.

Now, under the assumptions of Theorem~\ref{teorema:Mendes}, it follows that there exists an outer\linebreak neighborhood $V \cong [0, \delta) \times \Sigma$ of $\Sigma$ in $M$ with coordinates $(t, x)$ in $V$ such that the metric $g$ can be expressed as 
\[
g = \varphi^2 dt^2 + h_{ij} dx^i dx^j \quad \text{on} \quad V,
\]
where $\varphi = \varphi(t, x)$ is positive, $h_t = h_{ij}(t, x)dx^i dx^j$ is the induced metric on $\Sigma_t \cong \{t\} \times \Sigma$, and $\Sigma_t$ is a free boundary MOTS in $(M, g, K = -\epsilon g)$ for each $t \in [0, \delta)$. Moreover, it follows from Proposition~\ref{prop:Ambrozio} that 
\[
\frac{\partial\varphi}{\partial\nu_t} = \operatorname{II}^{\partial M}(N_t, N_t)\varphi \quad \text{on} \quad \partial\Sigma_t,
\]
where $\nu_t$ is the outward unit normal to $\partial\Sigma_t$ in $(\Sigma_t, h_t)$.

\textbf{Claim 3:} $\varphi$ is constant on each $\Sigma_t$. 

Indeed, since $\Sigma_t$ is a MOTS for every \( t \in [0, \delta) \) and \( \partial_t = \varphi N_t \) is the variational vector field of the family \( (\Sigma_t) \), it follows from~\eqref{eq:variacaotheta} that 
$$
0 = \frac{\partial \theta^+}{\partial t} = L_t \varphi = -\Delta \varphi + 2\langle X_t, \nabla \varphi \rangle + (Q_t - |X_t|^2 + \operatorname{div} X_t) \varphi,
$$ 
where 
$$
Q_t = \frac{1}{2} R^{\Sigma_t} - (\mu + J(N_t)) - \frac{1}{2} |\chi_t^+|^2.
$$ 

On the other hand, since \( K(N_t, \cdot)|_{\Sigma_t} = -\epsilon g(N_t, \cdot)|_{\Sigma_t} = 0 \), we have \( X_t = 0 \). Moreover, as noted above, \( \mu = \frac{1}{2}(R^M + \epsilon(n+1)n) \) and \( J = 0 \). Therefore, 
$$
-\Delta \varphi + \Big(\frac{1}{2} R^{\Sigma_t} - P_t\Big) \varphi = 0 \quad \text{on} \quad \Sigma_t,
$$ 
where 
\begin{align*}
P_t := \frac{1}{2}(R^M + \epsilon(n+1)n + |\chi_t^+|^2) \geq 0.
\end{align*} 

Furthermore, note that the mean curvature \( H^{\partial M} \) of \( \partial M \) in \( (M, g) \) along \( \partial \Sigma_t \) can be expressed as 
\begin{align*}
H^{\partial M} = H^{\partial \Sigma_t} + \operatorname{II}^{\partial M}(N_t, N_t),
\end{align*} 
where \( H^{\partial \Sigma_t} \) is the mean curvature of \( \partial \Sigma_t \) in \( (\Sigma_t, h_t) \). This follows because \( \Sigma_t \) is free boundary in \( (M, g) \) (see the proof of Proposition~\ref{prop:aux1} in the next section). Thus, along \( \partial \Sigma_t \), we have 
\begin{eqnarray*}
\frac{\partial \varphi}{\partial \nu_t} + H^{\partial \Sigma_t} \varphi &=& \operatorname{II}^{\partial M}(N_t, N_t) \varphi + H^{\partial \Sigma_t} \varphi \\ 
&=& H^{\partial M} \varphi \\ 
&\geq& 0.
\end{eqnarray*} 

It then follows from Lemma~\ref{lem:Mendes1} that \( \varphi \) is constant on \( \Sigma_t \), \( P_t \equiv 0 \), and \( (\Sigma_t, h_t) \) is Ricci-flat with a totally geodesic boundary for each \( t \in [0, \delta) \). 

By making a change of variable if necessary, we can assume, without loss of generality, that \( \varphi \equiv 1 \). Thus, we have 
$$
g = dt^2 + h_{ij} dx^i dx^j \quad \text{on} \quad V.
$$ 

On the other hand, since \( P_t \equiv 0 \), we have \( \chi_t^+ = 0 \), \emph{i.e.}\ 
$$
\chi_t^+ = K|_{\Sigma_t} + A_t = -\epsilon h_t + A_t = 0,
$$ 
where \( A_t \) is the second fundamental form of \( \Sigma_t \) in \( (M, g) \). Therefore, \( A_t = \epsilon h_t \), which in coordinates means that 
$$
\frac{\partial h_{ij}}{\partial t} = 2\epsilon h_{ij}.
$$ 

Thus, 
$$
h_{ij}(t, x) = e^{2\epsilon t} h_{ij}(0, x).
$$ 

Finally, taking \( h = h_0 = g|_\Sigma \), it follows that \( g|_V = dt^2 + e^{2\epsilon t} h \), where \( (\Sigma, h) \) is Ricci-flat with a totally geodesic boundary, as desired. 
\end{proof}

One consequence of Theorem~\ref{theo:A} is the following:

\begin{cor}\label{cor}
Under the assumptions of Theorem~\ref{theo:A}, if $(M,g)$ is complete, then $(M_+,g|_{M_+})$ is isometric to $([0,\infty)\times\Sigma,dt^2+e^{2\epsilon t}h)$, where $h=g|_\Sigma$ and $(\Sigma,h)$ is Ricci-flat with a totally geodesic boundary.
\end{cor}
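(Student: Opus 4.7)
The plan is a standard continuation argument. Define
\[
T^{*} = \sup\{T > 0 : ([0,T]\times \Sigma,\, dt^2+e^{2\epsilon t}h) \text{ embeds isometrically in } M_+ \text{ extending } \mathrm{id}_\Sigma\}.
\]
By Theorem~\ref{theo:A}, $T^* > 0$; I would show $T^* = \infty$ by contradiction. Supposing $T^* < \infty$, completeness of $(M,g)$ together with a Cauchy-sequence argument on the slices extends the isometric embedding to $[0,T^*]\times\Sigma$, whose top slice $\Sigma_{T^*}$ is a compact free boundary hypersurface in $M$ inheriting mean curvature $n\epsilon$, Ricci-flat induced metric $e^{2\epsilon T^*}h$, and totally geodesic boundary.

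Next, I would reapply Theorem~\ref{theo:A} to $\Sigma_{T^*}$ to extend the embedding past $T^*$, contradicting maximality. Hypotheses (1), (2), and (3) transfer directly: (1) is global, (2) holds as equality $H^{\Sigma_{T^*}} \equiv n\epsilon$, and (3) follows because $\Sigma_{T^*}$ is diffeomorphic to $\Sigma$. The hard part is hypothesis (4): verifying that $\Sigma_{T^*}$ is locally weakly outermost with respect to $H_0 = n\epsilon$.

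To handle (4), I would pick a tubular neighborhood $V$ of $\Sigma_{T^*}$ whose inner half is contained in the existing warped product, and suppose, for contradiction, that some $\Sigma' \subset V \cap (\Sigma_{T^*})_+$ homologous to $\Sigma_{T^*}$ in $V$ satisfies $H^{\Sigma'} < n\epsilon$. In $(M, g, K = -\epsilon g)$ this makes $\Sigma'$ strictly outer trapped ($\theta^+_{\Sigma'} < 0$), while $\Sigma_{T^*}$ is a MOTS approached from inside by the MOTS foliation $\{\Sigma_t\}_{t<T^*}$, each slice having vanishing outward null second fundamental form and stability function $\phi_t \equiv 1$. A free boundary Andersson--Metzger-type construction (in the framework developed in~\cite{AlaeeLesourdYau2021, Mendes2022}) would produce an outermost MOTS in the slab bounded by $\Sigma_{T^*}$ and $\Sigma'$; the strong MOTS maximum principle, combined with the inner foliation acting as a barrier, forces this outermost MOTS to coincide with $\Sigma_{T^*}$, which then contradicts the strict outer-trappedness of $\Sigma'$ via a one-sided Hopf boundary argument applied to the stability function $\phi \equiv 1$ of $\Sigma_{T^*}$.

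With (4) in hand, Theorem~\ref{theo:A} produces an outer warped product neighborhood of $\Sigma_{T^*}$ that glues smoothly with the existing warped product on $[0,T^*]\times\Sigma$, extending the embedding past $T^*$ and contradicting the definition of $T^*$. Hence $T^* = \infty$, yielding an isometric embedding $\Phi\colon ([0,\infty)\times\Sigma, dt^2 + e^{2\epsilon t}h) \hookrightarrow M_+$ extending $\mathrm{id}_\Sigma$. Since the image is open by the warped product structure, closed by completeness and compactness of the slices, and $M_+$ is connected, $\Phi$ is onto $M_+$, completing the proof.
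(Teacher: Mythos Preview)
Your continuation framework matches what the paper invokes (Galloway--Jang, Theorem~3.1). The genuine gap is your treatment of hypothesis~(4) for $\Sigma_{T^*}$. The Andersson--Metzger construction does not apply as you set it up: it needs an outer-trapped \emph{inner} barrier and an outer-\emph{untrapped} outer barrier, whereas your $\Sigma'$ (lying outside $\Sigma_{T^*}$) is outer trapped --- the wrong sign for the outer barrier. Even granting an ``outermost MOTS'' $\hat\Sigma$ in the slab that coincides with $\Sigma_{T^*}$, the existence of an outer-trapped $\Sigma'$ strictly beyond it is not a contradiction; the Hopf-type argument you allude to does not produce one.

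More seriously, condition~(4) for $\Sigma_{T^*}$ does not follow from the \emph{local} hypothesis~(4) on $\Sigma$ alone. For $\epsilon=1$ one can write down a complete metric $dt^2+f(t)^2h$ on $\mathbb{R}\times\Sigma$, with $(\Sigma,h)$ Ricci-flat with totally geodesic boundary, taking $f(t)=e^t$ for $t\le 1$ and smoothly modifying $f$ so that $(\log f)'<1$, $(\log f)''\le 0$ for $t>1$; one checks $R^M\ge -n(n+1)$, $H^{\partial M}=0$, and $\Sigma_0$ is locally weakly outermost via the constant-$H$ foliation on $[0,1]$ and the maximum principle, yet $H^{\Sigma_t}=n(\log f)'(t)<n$ for all $t>1$. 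So no argument can establish~(4) for $\Sigma_{T^*}$ from the literal hypotheses of Theorem~\ref{theo:A}. What Galloway--Jang's global theorem actually uses --- and what this corollary needs --- is that $\Sigma$ be weakly outermost in all of $M_+$, not merely locally. Under that reading, (4) for every $\Sigma_{T^*}$ is immediate: any $\Sigma'\subset(\Sigma_{T^*})_+\subset M_+$ with $H^{\Sigma'}<n\epsilon$ directly violates weak outermost-ness of $\Sigma$. With this in hand your continuation argument closes with no Andersson--Metzger machinery at all.
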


In the preceding result, it suffices for $(M_+,g|_{M_+})$ to be complete. 

To prove Corollary~\ref{cor}, we can follow the same reasoning as in the proof of Theorem~3.1 in~\cite{GallowayJang2020}.

\begin{example}
Consider the $(n+1)$-dimensional manifold  
\[
N^{n+1} = \{x=(x_1,\ldots,x_{n+1})\in\mathbb{R}^{n+1}\mid r(x)\ge(m/2)^{\frac{1}{n-1}}\}
\]
equipped with the spatial Schwarzschild metric   
\[
g_{\text{Sch}} = \left(1 + \frac{m}{2r^{n-1}}\right)^{\frac{4}{n-1}} \delta,
\]
where $m>0$, \(\delta\) is the Euclidean metric, and \(r(x) = |x|\) denotes the Euclidean distance from \(x\in N\) to the origin. Standard computations show that the scalar curvature of \((N, g_{\text{Sch}})\) vanishes.  

Furthermore, the mean curvature of the boundary  
\(
S = \{ r = (\frac{m}{2})^{\frac{1}{n-1}} \}
\)  
of \(N\) is zero. More generally, the mean curvature of a sphere \(S' = \{ r = r_0 \}\) is given by  
\[
H^{S'} = \frac{n}{r_0} \left(1 - \frac{m}{r_0^{n-1} + \frac{m}{2}}\right).
\]  
In particular, we have \(H^{S'} > 0\) for \(r_0 > (\frac{m}{2})^{\frac{1}{n-1}}\). By the maximum principle, this implies that \(S\) is weakly outermost with respect to \(H_0 = 0\) (see also Proposition~2.2 in~\cite{GallowayMendes2024}).  

This confirms that \((N, g_{\text{Sch}})\) satisfies all the hypotheses of Theorem~\ref{theo:Galloway-Jang} for \(\epsilon = 0\), except for condition~(3). However, the conclusion of Theorem~\ref{theo:Galloway-Jang} does not hold.  

Now, consider an extension of this example. Let \(M^{n+1}\) be the manifold with boundary  
\[
M^{n+1} = \{ x = (x_1, \ldots, x_{n+1}) \in \mathbb{R}^{n+1} \setminus \{0\} \mid x_{n+1} \geq 0 \}
\]
equipped with the metric  
\[
g = \left(1 + \frac{m}{2r^{n-1}}\right)^{\frac{4}{n-1}} \delta.
\]  
The boundary \(\partial M\) of $M$ has zero mean curvature; in fact, \(\partial M\) is totally geodesic in \((M, g)\). Furthermore, the hemisphere  
\(
\Sigma = \{ r = (\frac{m}{2})^{\frac{1}{n-1}} \} \cap \{ x_{n+1} \geq 0 \}
\)
is a free boundary hypersurface in \((M, g)\). As before, all the hypotheses of Theorem~\ref{theo:A} are satisfied, except for condition~(3). This example demonstrates that condition~(3) in Theorem~\ref{theo:A} is necessary when \(\epsilon = 0\).
\end{example}

\begin{example}
Consider the $(n+1)$-dimensional manifold  
\(
N^{n+1} = [r_m,\infty) \times S^n
\)  
equipped with the AdS-Schwarzschild metric  
\[
g_{\text{AdS-Sch}} = V(r)^{-1} dr^2 + r^2 h_{S^n},
\]  
where  
\[
r_m = (2m)^{\frac{1}{n-1}}, \quad V(r) = 1 + r^2 - \frac{2m}{r^{n-1}},
\]  
and \( h_{S^n} \) is the standard round metric of constant curvature one on \( S^n \).

It follows that \((N, g_{\text{AdS-Sch}})\) has constant scalar curvature \( -(n+1)n \). Furthermore, the mean curvature of a slice \( \{r_0\}\times S^n \) is given by  
\[
H(r_0) = n \frac{V(r_0)^{\frac{1}{2}}}{r_0}.
\]  
In particular, the boundary  
\(
S = \{r_m\} \times S^n
\)  
of \( N \) has constant mean curvature \( H(r_m) = n \); and for \( r_0 > r_m \), we have \( H(r_0) > n \). Thus, \( S \) is weakly outermost with respect to \( H_0 = n \) (see Proposition~2.2 in~\cite{GallowayMendes2024}). This confirms that condition~(3) in Theorem~\ref{theo:Galloway-Jang}, as pointed out by Galloway and Jang~\cite[Remark~1]{GallowayJang2020}, is also necessary when \( \epsilon = 1 \).  

Now, we extend this example by considering the manifold with boundary  
\(
M = (r_+, \infty) \times S^n_+
\)  
equipped with the metric  
\[
g = V(r)^{-1} dr^2 + r^2 h_{S^n_+},
\]  
where \( S^n_+ \) is a hemisphere of \( S^n \), and \( h_{S^n_+} \) is the induced metric on \( S^n_+ \). Here, \( r_+ \) is the unique positive number such that \( V(r_+) = 0 \). This example satisfies all the hypotheses of Theorem~\ref{theo:A} for \( \epsilon = 1 \), except for condition~(3).
\end{example}

\begin{example}  
As a final example in this section, consider the manifold \( N^{n+1} = [r_0, \infty) \times T^n \) equipped with the Kottler metric  
\begin{align*}  
g_{\text{K}} = \left(r^2 - \frac{2m}{r^{n-1}}\right)^{-1} dr^2 + r^2 h,  
\end{align*}  
where \( r_0 > (2m)^{\frac{1}{n+1}} \) and \( h \) is a flat metric on \( T^n \). The scalar curvature of \( (N, g_{\text{K}}) \) is constant equals \( -(n+1)n \), and the mean curvature of a slice \( \{r\} \times T^n \) is given by  
\begin{align*}  
H(r) = n \frac{( r^2 - \frac{2m}{r^{n-1}})^{\frac{1}{2}}}{r} < n.  
\end{align*}  
Moreover, the boundary \( \{r_0\} \times T^n \) of \( N \) does not support a metric of positive scalar curvature. This example demonstrates that condition (4) in Theorem~\ref{theo:Galloway-Jang} is also necessary when \( \epsilon = 1 \).  

To extend this example to the case of free boundary hypersurfaces, consider the manifold  
\(
M^{n+1} = (r_+, \infty) \times [a, b] \times T^{n-1}  
\)  
endowed with the Riemannian metric  
\begin{align*}  
g = \left(r^2 - \frac{2m}{r^{n-1}}\right)^{-1} dr^2 + r^2 h_0,  
\end{align*}  
where \( r_+ = (2m)^{\frac{1}{n+1}} \), \( a < b \), and \( h_0 \) is a flat metric on \( [a, b] \times T^{n-1} \). This example satisfies all the hypotheses of Theorem~\ref{theo:A} for \( \epsilon = 1 \), except for condition (4).  
\end{example}

\section{Proof of Theorem~\ref{theo:B}}\label{sec:proof.Theo.B}

The goal of this section is to prove Theorem~\ref{theo:B}. However, before doing so, we will present and prove several auxiliary results. The first of these is a volume estimate for compact, free boundary stable MOTS with a negative Yamabe invariant:

\begin{prop}\label{prop:aux1} 
Let \((M^{n+1}, g, K)\), with \(n \geq 3\), be an initial data set with boundary, and let \(\Sigma^n\) be a compact, free boundary stable MOTS in \((M^{n+1}, g, K)\). 
\begin{enumerate} 
\item[$(1)$] If \(\mu + J(N) \geq -c\) on \(\Sigma\) for some constant \(c > 0\), \(H^{\partial M} \geq |(\iota_{\varrho} \pi)^{\top}|\) on \(\partial M\), and \(\Sigma\) has Yamabe invariant \(\sigma^{1,0}(\Sigma, \partial\Sigma) < 0\), then 
\begin{equation}\label{eq1:prop:aux1} 
\operatorname{vol}(\Sigma) \geq \left(\frac{|\sigma^{1,0}(\Sigma, \partial\Sigma)|}{2c}\right)^{\frac{n}{2}}. 
\end{equation} 
\item[$(2)$] If \(\mu + J(N) \geq 0\) on \(\Sigma\), \(H^{\partial M} - |(\iota_{\varrho} \pi)^{\top}| \geq -\Bar{c}\) on \(\partial M\) for some constant \(\Bar{c} > 0\), and \(\Sigma\) has Yamabe invariant \(\sigma^{0,1}(\Sigma, \partial\Sigma) < 0\), then 
$$ 
\operatorname{vol}(\partial\Sigma) \geq \left(\frac{|\sigma^{0,1}(\Sigma, \partial\Sigma)|}{2\Bar{c}}\right)^{n-1}. 
$$ 
\end{enumerate} 
\end{prop}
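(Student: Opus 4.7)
The plan is to extract the estimate from the variational characterization of the Yamabe invariant by feeding the stability inequality from Lemma~\ref{lem:Mendes3} into the conformal Rayleigh quotient. That lemma yields $\lambda_1(\mathcal{L}_0) \geq 0$, which, after moving the boundary term to the right-hand side, is equivalent to
$$
\int_\Sigma (|\nabla u|^2 + Q u^2)\, dv \geq \int_{\partial \Sigma} (\mathrm{II}^{\partial M}(N, N) - \langle X, \nu\rangle)\, u^2\, ds
$$
for every positive $u \in C^\infty(\Sigma)$, where $Q = \tfrac{1}{2}R^\Sigma - (\mu + J(N)) - \tfrac{1}{2}|\chi^+|^2$. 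I would show that, under the energy assumptions and the free boundary condition, this rearranges into a lower bound whose form mirrors the numerator of $Q_g^{1,0}$ (respectively $Q_g^{0,1}$), after which H\"older's inequality concludes.

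The key geometric step is a boundary decomposition. Since $\Sigma$ is free boundary, along $\partial \Sigma$ one has $\varrho = \nu$ and $N \in T(\partial M)$, so that for any orthonormal frame $\{e_1, \ldots, e_{n-1}\}$ of $T\partial \Sigma$ the set $\{e_1, \ldots, e_{n-1}, N\}$ is an orthonormal frame of $T\partial M$. A direct trace computation, together with the fact that $\nu$ and $\varrho$ agree along $\partial \Sigma$ so that their tangential derivatives agree on $\partial \Sigma$, yields
$$
H^{\partial M} = H^{\partial \Sigma} + \mathrm{II}^{\partial M}(N, N) \quad \text{on} \quad \partial \Sigma.
$$
Simultaneously, $\langle X, \nu\rangle = K(N, \nu) = (\iota_\varrho \pi)^\top(N)$ is one component of the $1$-form $(\iota_\varrho \pi)^\top$, so $|\langle X, \nu\rangle| \leq |(\iota_\varrho \pi)^\top|$ along $\partial \Sigma$.

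For Part~(1), the assumption $\mu + J(N) \geq -c$ gives $Q \leq \tfrac{1}{2}R^\Sigma + c$, while the BDEC yields $\mathrm{II}^{\partial M}(N, N) - \langle X, \nu\rangle \geq H^{\partial M} - |(\iota_\varrho \pi)^\top| - H^{\partial \Sigma} \geq -H^{\partial \Sigma}$. Substituting into the stability inequality gives
$$
\int_\Sigma \left(|\nabla u|^2 + \tfrac{1}{2}R^\Sigma u^2\right) dv + \int_{\partial \Sigma} H^{\partial \Sigma} u^2\, ds \geq -c \int_\Sigma u^2\, dv.
$$
Multiplying by $2$ and noting that $\tfrac{4(n-1)}{n-2} \geq 2$ for $n \geq 3$, the left-hand side is at most the numerator of $Q_g^{1,0}(u)$. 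An application of H\"older's inequality,
$$
\int_\Sigma u^2\, dv \leq \operatorname{vol}(\Sigma)^{2/n} \left(\int_\Sigma u^{\frac{2n}{n-2}} dv\right)^{\frac{n-2}{n}},
$$
and division by the denominator of $Q_g^{1,0}(u)$ produce $Q_g^{1,0}(u) \geq -2c\,\operatorname{vol}(\Sigma)^{2/n}$ for every positive $u$. Taking the infimum yields $Q_{[g]}^{1,0}(\Sigma, \partial \Sigma) \geq -2c\,\operatorname{vol}(\Sigma)^{2/n}$, and since $\sigma^{1,0}(\Sigma, \partial \Sigma) \geq Q_{[g]}^{1,0}(\Sigma, \partial \Sigma)$ while $\sigma^{1,0}(\Sigma, \partial \Sigma) < 0$, rearranging delivers~\eqref{eq1:prop:aux1}. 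Part~(2) proceeds in the same way: $\mu + J(N) \geq 0$ improves the bulk bound to $Q \leq \tfrac{1}{2}R^\Sigma$, while $H^{\partial M} - |(\iota_\varrho \pi)^\top| \geq -\bar{c}$ replaces the bulk slack by a boundary slack $-\bar{c}\int_{\partial \Sigma} u^2\, ds$. The H\"older estimate $\int_{\partial \Sigma} u^2\, ds \leq \operatorname{vol}(\partial \Sigma)^{1/(n-1)} \bigl(\int_{\partial \Sigma} u^{2(n-1)/(n-2)} ds\bigr)^{(n-2)/(n-1)}$ then matches the denominator of $Q_g^{0,1}(u)$ and yields the claimed estimate for $\operatorname{vol}(\partial \Sigma)$.

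The main subtlety is the boundary bookkeeping: establishing the decomposition $\mathrm{II}^{\partial M}(N, N) = H^{\partial M} - H^{\partial \Sigma}$ from the free boundary condition, and dominating the off-diagonal term $K(N, \nu)$ by the full norm $|(\iota_\varrho \pi)^\top|$ so that the BDEC becomes directly applicable. Once these two identifications are in place, the remaining argument is the standard conformal rewriting used in the closed-MOTS results of Mendes and Moraru, with H\"older's inequality supplying the appropriate power of $\operatorname{vol}(\Sigma)$ or $\operatorname{vol}(\partial \Sigma)$.
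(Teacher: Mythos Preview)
Your proposal is correct and follows essentially the same route as the paper: both invoke Lemma~\ref{lem:Mendes3} to obtain the Rayleigh-quotient inequality, then use the free boundary decomposition $H^{\partial M}=H^{\partial\Sigma}+\operatorname{II}^{\partial M}(N,N)$ together with the identification $\langle X,\nu\rangle=(\iota_\varrho\pi)^\top(N)$ to convert the boundary term, apply the energy assumptions and $a_n\ge 2$, and finish with H\"older's inequality and the definition of the Yamabe constant. The order of steps differs slightly, but there is no substantive difference.
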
 

The above proposition is the analog of Theorem~1 in~\cite{BarrosCruz2020} for compact free boundary MOTS in initial data sets with boundary.

\begin{proof}
We will begin with the proof of item~(1).

First, since \(\Sigma\) is a stable MOTS, it follows from Lemma~\ref{lem:Mendes3} that \(\lambda_1(\mathcal{L}_0) \geq 0\), \emph{i.e.}\ 
$$
0 \leq \lambda_1(\mathcal{L}_0) = \inf_{u \in C^{\infty}(\Sigma) \setminus \{0\}} \frac{\ds\int_{\Sigma} ( |\nabla u|^2 + Q u^2 ) dv - \int_{\partial \Sigma}( \mathrm{II}^{\partial M}(N, N) - \langle X, \nu \rangle ) u^2 ds}{\ds\int_{\Sigma} u^2 dv}.
$$
Thus, for \(u \in C^\infty(\Sigma)\), we have:
\begin{eqnarray}
0 &\leq & \int_{\Sigma}( 2 |\nabla u|^2 + 2 Q u^2 ) dv - 2 \int_{\partial \Sigma}( \operatorname{II}^{\partial M}(N, N) - \langle X, \nu \rangle ) u^2 ds \nonumber \\
&=& \int_{\Sigma}( 2 |\nabla u|^2 + ( R^{\Sigma} - 2 (\mu + J(N)) - |\chi^+|^2 ) u^2 ) dv \nonumber \\
&& - 2 \int_{\partial \Sigma}( \operatorname{II}^{\partial M}(N, N) - \langle X, \nu \rangle ) u^2 ds \nonumber \\
&\leq & \int_{\Sigma}( 2 |\nabla u|^2 + ( R^{\Sigma} + 2c ) u^2) dv - 2 \int_{\partial \Sigma} ( \operatorname{II}^{\partial M}(N, N) - \langle X, \nu \rangle) u^2 ds, \label{eq2:prop:aux1}
\end{eqnarray}
where we have used that \(\mu + J(N) \geq -c\) on \(\Sigma\).

On the other hand, since \(\Sigma\) is free boundary in \((M, g)\), it follows that the conormal \(\nu\) of \(\partial \Sigma\) in \(\Sigma\), with respect to the induced metric \(h = g|_\Sigma\), coincides with the conormal \(\varrho\) of \(\partial M\) in \((M, g)\) along \(\partial \Sigma\). Thus, at each point \(p \in \partial \Sigma\), we can choose an orthonormal frame \(\{e_1, \dots, e_{n-1}, e_n\}\) of \(T_p \partial M\) such that \(e_n = N\), \emph{i.e.}\ such that \(\{e_1, \dots, e_{n-1}\}\) is an orthonormal frame of \(T_p \partial \Sigma\). Therefore, 
\begin{align*}
H^{\partial M} &= \sum_{i=1}^{n-1} \langle \nabla_{e_i} \varrho, e_i \rangle + \langle \nabla_N \varrho, N \rangle \\
&= \sum_{i=1}^{n-1} \langle \nabla_{e_i} \nu, e_i \rangle + \operatorname{II}^{\partial M}(N, N) \\
&= H^{\partial \Sigma} + \operatorname{II}^{\partial M}(N, N);
\end{align*}
which, when substituted into~(\ref{eq2:prop:aux1}), gives:
\begin{equation}\label{eq3:prop:aux1}
0 \leq \int_{\Sigma} ( 2 |\nabla u|^2 + ( R^{\Sigma} + 2c ) u^2 ) dv - 2 \int_{\partial \Sigma} ( H^{\partial M} - H^{\partial \Sigma} - \langle X, \nu \rangle ) u^2 ds
\end{equation}
for every function \(u \in C^{\infty}(\Sigma)\). Therefore, defining \(a_n = \frac{4(n-1)}{n-2}\), it follows from~\eqref{eq3:prop:aux1}, together with Hölder's inequality, that
\begin{eqnarray}\label{eq4:prop:aux1}
0 &\leq & \int_{\Sigma} ( a_n |\nabla u|^2 + R^{\Sigma} u^2 ) dv + 2c \operatorname{vol}(\Sigma)^{\frac{2}{n}}\left(\int_{\Sigma} u^{\frac{2n}{n-2}}dv\right)^{\frac{n-2}{n}} \nonumber \\
&& + 2 \int_{\partial \Sigma} H^{\partial \Sigma} u^2 ds - 2 \int_{\partial \Sigma} ( H^{\partial M} - \langle X, \nu \rangle ) u^2 ds,
\end{eqnarray}
since \(a_n > 2\) for all \(n \geq 3\).

Along \(\partial \Sigma\), we have
$$
(\iota_{\varrho} \pi)^{\top}(N) = K(\varrho, N) - (\operatorname{tr} K) \langle \varrho, N \rangle = K(\nu, N) = \langle X, \nu \rangle,
$$
and
$$
H^{\partial M} \geq |(\iota_{\varrho} \pi)^{\top}| \geq (\iota_{\varrho} \pi)^{\top}(N),
$$
which implies that
$$
H^{\partial M} - \langle X, \nu \rangle = H^{\partial M} - (\iota_{\varrho} \pi)^{\top}(N) \geq 0.
$$
Thus, using the above inequality in~\eqref{eq4:prop:aux1}, we obtain
$$
0 \leq \frac{\ds\int_{\Sigma} ( a_n |\nabla u|^2 + R^{\Sigma} u^2 ) dv + 2 \int_{\partial \Sigma} H^{\partial \Sigma} u^2 ds}{\Big(\ds \int_{\Sigma} u^{\frac{2n}{n-2}} dv \Big)^{\frac{n-2}{n}}} + 2c \operatorname{vol}(\Sigma)^{\frac{2}{n}} = Q_h^{1,0}(u) + 2c \operatorname{vol}(\Sigma)^{\frac{2}{n}},
$$
for all \(u \in C^{\infty}(\Sigma)\) positive. This implies that
\begin{align*}
0 &\leq \inf_{u \in C^\infty(\Sigma),\, u > 0} Q_h^{1,0}(u) + 2c \operatorname{vol}(\Sigma)^{\frac{2}{n}} \\
&= Q_h^{1,0}(\Sigma, \partial \Sigma) + 2c \operatorname{vol}(\Sigma)^{\frac{2}{n}} \\
&\leq \sigma^{1,0}(\Sigma, \partial \Sigma) + 2c \operatorname{vol}(\Sigma)^{\frac{2}{n}},
\end{align*}
which gives
\begin{eqnarray*}
\operatorname{vol}(\Sigma) \geq \left( \frac{|\sigma^{1,0}(\Sigma, \partial \Sigma)|}{2 c} \right)^{\frac{n}{2}},
\end{eqnarray*}
as we intended to prove.

To prove item (2), we start from the inequality 
$$
0 \leq \int_{\Sigma}(2|\nabla u|^2 + (R^{\Sigma} - 2(\mu + J(N)) - |\chi^+|^2)u^2) dv 
- 2\int_{\partial\Sigma}(\operatorname{II}^{\partial M}(N, N) - \langle X, \nu \rangle)u^2 ds,
$$ 
which, as before, holds for all \( u \in C^\infty(\Sigma) \). In this case, since \( \mu + J(N) \geq 0 \) on \( \Sigma \) and
\begin{align*}
\operatorname{II}^{\partial M}(N, N) - \langle X, \nu \rangle &=H^{\partial M} - H^{\partial\Sigma} - (\iota_\varrho \pi)^\top(N)\\
&\geq H^{\partial M} - |(\iota_\varrho \pi)^\top| - H^{\partial\Sigma} \\
&\geq -\Bar{c} - H^{\partial\Sigma},
\end{align*}
we have
\[
0 \leq \int_{\Sigma}(2|\nabla u|^2 + R^{\Sigma}u^2) dv + 2\int_{\partial\Sigma} H^{\partial\Sigma}u^2 ds + 2\Bar{c}\int_{\partial\Sigma} u^2 ds 
\] 
for all \( u \in C^\infty(\Sigma) \). 

Once again, it follows from Hölder's inequality and the fact that \( a_n > 2 \) that 
\[
0 \leq \int_{\Sigma}(a_n|\nabla u|^2 + R^{\Sigma}u^2) dv + 2\int_{\partial\Sigma} H^{\partial\Sigma}u^2 ds 
+ 2\Bar{c}\operatorname{vol}(\partial\Sigma)^{\frac{1}{n-1}}\left(\int_{\partial\Sigma} u^{\frac{2(n-1)}{n-2}} ds\right)^{\frac{n-2}{n-1}}. 
\] 

Therefore, dividing the above inequality by 
\[
\left(\int_{\partial\Sigma} u^{\frac{2(n-1)}{n-2}} ds\right)^{\frac{n-2}{n-1}} 
\] 
and taking the infimum over all functions \( u \in C^\infty(\Sigma) \) with \( u > 0 \), we obtain: 
\[
0 \leq Q_h^{0,1}(\Sigma, \partial\Sigma) + 2\Bar{c}\operatorname{vol}(\partial\Sigma)^{\frac{1}{n-1}} 
\leq \sigma^{0,1}(\Sigma, \partial\Sigma) + 2\Bar{c}\operatorname{vol}(\partial\Sigma)^{\frac{1}{n-1}}, 
\] 
which establishes the result.
\end{proof}

The next proposition is an \textit{infinitesimal rigidity} result that we obtain by assuming the volume of \( \Sigma \) saturates inequality~\eqref{eq1:prop:aux1}.

\begin{prop}[Infinitesimal rigidity]\label{prop:aux2}
Under the hypotheses of Proposition~\ref{prop:aux1}, if equality in~(\ref{eq1:prop:aux1}) holds, then:
\begin{itemize}
\item $(\Sigma,h)$ is Einstein with scalar curvature $R^\Sigma = -2c$ and a totally geodesic boundary, where $h = g|_\Sigma$;
\item $\mu + J(N) = -c$ and $\chi^+ = 0$ on $\Sigma$;
\item $H^{\partial M} = |(\iota_\varrho\pi)^\top| = \langle X, \nu \rangle$ on $\partial\Sigma$;
\item $\lambda_1(\mathcal{L}_0) = 0$.
\end{itemize}
\end{prop}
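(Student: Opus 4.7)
My plan is to revisit the inequality chain from the proof of Proposition~\ref{prop:aux1}(1) and exploit the fact that, in the saturated case, every step of that chain must hold with equality. Recall that for each smooth positive $u$ on $\Sigma$, combining stability with the hypotheses $\mu+J(N)\ge -c$, $|\chi^{+}|^{2}\ge 0$, $H^{\partial M}\ge|(\iota_{\varrho}\pi)^{\top}|\ge\langle X,\nu\rangle$, the decomposition $\operatorname{II}^{\partial M}(N,N)=H^{\partial M}-H^{\partial\Sigma}$, the elementary bound $2<a_{n}$, and H\"older produces
\[
0\ \le\ Q_{h}^{1,0}(u)+2c\operatorname{vol}(\Sigma)^{2/n},
\]
so taking the infimum over $u$ and using $Q_{h}^{1,0}(\Sigma,\partial\Sigma)\le\sigma^{1,0}(\Sigma,\partial\Sigma)$ yields the volume bound. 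Since saturation translates into $2c\operatorname{vol}(\Sigma)^{2/n}=-\sigma^{1,0}(\Sigma,\partial\Sigma)$, every inequality in the chain must collapse to an equality, and in particular $Q_{h}^{1,0}(\Sigma,\partial\Sigma)=\sigma^{1,0}(\Sigma,\partial\Sigma)$, so $h$ achieves the Yamabe invariant.

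Since the $(1,0)$-Yamabe problem with boundary has been resolved, I would pick a smooth positive $\varphi$ such that $\bar h=\varphi^{4/(n-2)}h$ is a Yamabe metric in $[h]$; by construction $Q_{h}^{1,0}(\varphi)=\sigma^{1,0}(\Sigma,\partial\Sigma)$, so inserting $u=\varphi$ forces equality at every stage of the chain. Because $\varphi>0$ pointwise, the equalities translate directly into the conclusions $\mu+J(N)=-c$ and $\chi^{+}=0$ on $\Sigma$, and $H^{\partial M}=\langle X,\nu\rangle$ on $\partial\Sigma$; combining the last with $\langle X,\nu\rangle=(\iota_{\varrho}\pi)^{\top}(N)\le|(\iota_{\varrho}\pi)^{\top}|\le H^{\partial M}$ upgrades it to $H^{\partial M}=|(\iota_{\varrho}\pi)^{\top}|=\langle X,\nu\rangle$. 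Equality in the H\"older step (or equivalently in the $2<a_{n}$ step) forces $\nabla\varphi\equiv 0$, so $\varphi$ is a positive constant and $\bar h$ is a homothetic rescaling of $h$.

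With $h$ achieving the Yamabe invariant, Proposition~\ref{prop:CruzSantos} says every Yamabe metric in $[h]$ is Einstein with totally geodesic boundary; since $\bar h$ is a homothety of $h$ and both conditions are invariant under constant rescalings, $(\Sigma,h)$ itself is Einstein with totally geodesic boundary. The scalar curvature $R^{\Sigma}$ is therefore constant, and is pinned down by evaluating $Q_{h}^{1,0}$ on $u\equiv 1$: the totally geodesic boundary kills the boundary integral and yields $\sigma^{1,0}(\Sigma,\partial\Sigma)=R^{\Sigma}\operatorname{vol}(\Sigma)^{2/n}$; combining with $\operatorname{vol}(\Sigma)^{2/n}=-\sigma^{1,0}(\Sigma,\partial\Sigma)/(2c)$ gives $R^{\Sigma}=-2c$.

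Finally, Lemma~\ref{lem:Mendes3} already supplies $\lambda_{1}(\mathcal{L}_{0})\ge 0$. For the reverse inequality I would test the Rayleigh quotient of $\mathcal{L}_{0}$ with Robin boundary condition $B_{0}u=0$ on $u\equiv 1$: the bulk contribution is $Q=\tfrac{1}{2}R^{\Sigma}-(\mu+J(N))-\tfrac{1}{2}|\chi^{+}|^{2}=\tfrac{1}{2}(-2c)-(-c)-0=0$, and the boundary contribution is $\operatorname{II}^{\partial M}(N,N)-\langle X,\nu\rangle=(H^{\partial M}-H^{\partial\Sigma})-H^{\partial M}=-H^{\partial\Sigma}=0$, so the quotient vanishes and $\lambda_{1}(\mathcal{L}_{0})=0$. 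The main obstacle I anticipate is the coordination of the H\"older equality (which forces the Yamabe minimizer $\varphi$ to be constant) with Proposition~\ref{prop:CruzSantos} (which furnishes an Einstein representative of $[h]$), so as to conclude that $(\Sigma,h)$ itself, not merely some conformal representative, is Einstein with totally geodesic boundary; once this is in place, all remaining items follow by direct substitution.
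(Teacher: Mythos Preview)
Your proposal is correct and follows essentially the same approach as the paper: both pick a Yamabe minimizer $\varphi$ in $[h]$, feed it through the inequality chain from Proposition~\ref{prop:aux1}(1), and read off all the equalities (including that $\varphi$ is constant from the $a_n>2$ step), then invoke Proposition~\ref{prop:CruzSantos}. The only cosmetic differences are that the paper extracts $\lambda_1(\mathcal{L}_0)=0$ directly from the saturated chain (since $0\le 2\lambda_1(\mathcal{L}_0)\int_\Sigma\varphi^2\,dv$ sits at its left end), whereas you establish it afterwards by testing the Rayleigh quotient at $u\equiv 1$; and you spell out the computation $R^\Sigma=-2c$ via $Q_h^{1,0}(1)$, which the paper leaves implicit.
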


\begin{proof}
From the solution of the Yamabe problem, we know that there exists a positive function $\phi \in C^\infty(\Sigma)$ that attains the Yamabe constant $Q_h^{1,0}(\Sigma, \partial\Sigma)$, \emph{i.e.}\ $$Q_h^{1,0}(\phi) = Q_h^{1,0}(\Sigma, \partial\Sigma).$$ 

On the other hand, it follows from the end of the proof of item~(1) of Proposition~\ref{prop:aux1}, along with equality in~\eqref{eq1:prop:aux1}, that
\begin{align}\label{eq1:prop:aux2}
0 \le Q_h^{1,0}(\Sigma, \partial\Sigma) + 2c \operatorname{vol}(\Sigma)^{\frac{2}{n}} \le \sigma^{1,0}(\Sigma, \partial\Sigma) + 2c \operatorname{vol}(\Sigma)^{\frac{2}{n}} = 0.
\end{align} 
Therefore,
\begin{align*}
Q_h^{1,0}(\phi) + 2c \operatorname{vol}(\Sigma)^{\frac{2}{n}} = Q_h^{1,0}(\Sigma, \partial\Sigma) + 2c \operatorname{vol}(\Sigma)^{\frac{2}{n}} = 0,
\end{align*}
which, by multiplying by $(\int_{\Sigma} \phi^{\frac{2n}{n-2}} dv )^{\frac{n-2}{n}}$, implies
\begin{align*}
\int_{\Sigma}(a_n|\nabla\phi|^2 + R^{\Sigma} \phi^2) dv + 2 \int_{\partial\Sigma} H^\Sigma \phi^2 ds + 2c \operatorname{vol}(\Sigma)^{\frac{2}{n}} \left( \int_{\Sigma} \phi^{\frac{2n}{n-2}} dv \right)^{\frac{n-2}{n}} = 0.
\end{align*}

Since $\Sigma$ is a stable MOTS, it follows from Lemma~\ref{lem:Mendes3} that
\begin{eqnarray*}
0 &\leq & 2 \lambda_1(\mathcal{L}_0) \int_\Sigma \phi^2 dv \\
&\leq & 2 \left( \int_{\Sigma} (|\nabla\phi|^2 + Q \phi^2) dv - \int_{\partial\Sigma} (\operatorname{II}^{\partial M}(N, N) - \langle X, \nu \rangle) \phi^2 ds \right) \\
&=& \int_{\Sigma} (2 |\nabla \phi|^2 + (R^\Sigma - 2(\mu + J(N)) - |\chi^+|^2) \phi^2) dv - 2 \int_{\partial\Sigma} (H^{\partial M} - H^{\partial\Sigma} - \langle X, \nu \rangle) \phi^2 ds \\
&\leq & \int_{\Sigma} (a_n |\nabla \phi|^2 + (R^\Sigma + 2c) \phi^2) dv + 2 \int_{\partial\Sigma} H^{\partial\Sigma} \phi^2 ds \\
&\leq & \int_{\Sigma} (a_n |\nabla \phi|^2 + R^\Sigma \phi^2) dv + 2 \int_{\partial\Sigma} H^{\partial\Sigma} \phi^2 ds + 2c \operatorname{vol}(\Sigma)^{\frac{2}{n}} \left( \int_{\Sigma} \phi^{\frac{2n}{n-2}} dv \right)^{\frac{n-2}{n}} \\
&=& 0,
\end{eqnarray*}
where, above, we have used that $a_n > 2$, $\mu + J(N) \geq -c$ on $\Sigma$, $\II^{\partial M}(N, N) = H^{\partial M} - H^\Sigma$, and 
$$
H^{\partial M} - \langle X, \nu \rangle = H^{\partial M} - (\iota_\varrho \pi)^\top(N) \geq H^{\partial M} - |(\iota_\varrho \pi)^\top| \geq 0\quad\mbox{on}\quad\partial\Sigma.
$$
Therefore, all the above inequalities are indeed equalities. Hence, we obtain:
\begin{itemize}
\item $\lambda_1(\mathcal{L}_0) = 0$;
\item $\mu + J(N) = -c$ and $\chi^+ = 0$ on $\Sigma$;
\item $H^{\partial M} = |(\iota_\varrho \pi)^\top| = \langle X, \nu \rangle$ on $\partial\Sigma$.
\end{itemize}
Additionally, $|\nabla \phi|^2 \equiv 0$, that is, $\phi$ is constant, since $a_n > 2$. 

Finally, from~\eqref{eq1:prop:aux2}, we conclude that $Q_h^{1,0}(\Sigma, \partial\Sigma) = \sigma^{1,0}(\Sigma, \partial\Sigma)$. Thus, by Proposition~\ref{prop:CruzSantos}, we obtain that the Yamabe metric $\Bar{h} = \phi^{\frac{4}{n-2}} h$ on $\Sigma$ is Einstein with a totally geodesic boundary; the same holds for $h$, since $\phi$ is constant, which concludes the proof of the proposition.
\end{proof}

For the next lemma, consider the operator
$$
L^*\psi = -\Delta\psi - 2\langle X, \nabla\psi \rangle + (Q - |X|^2 - \operatorname{div} X) \psi, \quad \psi \in C^\infty(\Sigma),
$$
called the \textit{formal adjoint operator} of $L$; this is because, by the divergence theorem,
$$
\int_\Sigma \phi L^*\psi \, dv + \int_{\partial\Sigma} \phi B^*\psi \, ds=\int_\Sigma \psi L\phi \, dv + \int_{\partial\Sigma} \psi B\phi \, ds
$$
for all $\phi, \psi \in C^\infty(\Sigma)$, where
$$
B^*\psi := \frac{\partial\psi}{\partial\nu} - ( \operatorname{II}^{\partial M}(N,N) - 2\langle X, \nu \rangle ) \psi.
$$

\begin{lem}\label{lem:aux1}
Under the assumptions of Proposition~\ref{prop:aux2}, we have that $\lambda = 0$ is a simple eigenvalue of $L$ on $\Sigma$ with Robin boundary condition $B\phi = 0$, and its associated eigenfunctions can be chosen positive. The same holds for the formal adjoint operator $L^*$ of $L$ on $\Sigma$ with Robin boundary condition $B^*\phi^* = 0$.
\end{lem}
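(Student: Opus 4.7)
The plan is to identify $\lambda = 0$ with the principal eigenvalue of the (non-self-adjoint) operator $L$ subject to the Robin condition $B\phi = 0$, combining Krein-Rutman spectral theory with a standard logarithmic substitution that relates $L$ to $\mathcal{L}_0$. Since $L$ is a uniformly elliptic second-order operator on the compact manifold $\Sigma$ and $B$ is an oblique boundary operator, the Krein-Rutman theorem (in the form used by Andersson-Mars-Simon~\cite{AnderssonMarsSimon2008} and extended to the free boundary MOTS setting by Alaee-Lesourd-Yau~\cite{AlaeeLesourdYau2021}) produces a real, algebraically simple principal eigenvalue $\lambda_1(L)$ with a strictly positive eigenfunction $\phi \in C^\infty(\Sigma)$. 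The stability hypothesis for $\Sigma$ provides a non-negative $\phi_s \not\equiv 0$ with $L\phi_s \geq 0$ and $B\phi_s = 0$; pairing $\phi_s$ against the positive principal eigenfunction of the adjoint pair and integrating by parts forces $\lambda_1(L) \geq 0$.

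The reverse inequality comes from the substitution $f = \log \phi$ in $L\phi = \lambda_1(L)\phi$, which yields the pointwise identity
\[
Q = \lambda_1(L) + |X - \nabla f|^2 - \operatorname{div}(X - \nabla f).
\]
The Robin condition $B\phi = 0$ translates to $\partial f/\partial \nu = \operatorname{II}^{\partial M}(N, N)$ along $\partial \Sigma$, so that $\langle X - \nabla f, \nu \rangle = \langle X, \nu \rangle - \operatorname{II}^{\partial M}(N, N)$ there. Inserting the expression for $Q$ into the Rayleigh quotient of $\mathcal{L}_0$ tested against an arbitrary $\psi \in C^\infty(\Sigma)\setminus\{0\}$, integrating the divergence term by parts, and applying the Cauchy-Schwarz estimate $2 \langle X - \nabla f, \psi \nabla \psi \rangle \geq -|X - \nabla f|^2 \psi^2 - |\nabla \psi|^2$ to absorb the cross term, one obtains
\[
\int_\Sigma \bigl(|\nabla\psi|^2 + Q \psi^2\bigr)\,dv - \int_{\partial \Sigma}\bigl(\operatorname{II}^{\partial M}(N, N) - \langle X, \nu \rangle\bigr)\psi^2\,ds \;\geq\; \lambda_1(L) \int_\Sigma \psi^2\,dv.
\]
Taking the infimum in $\psi$ gives $\lambda_1(\mathcal{L}_0) \geq \lambda_1(L)$; since Proposition~\ref{prop:aux2} established $\lambda_1(\mathcal{L}_0) = 0$, we conclude $\lambda_1(L) = 0$.

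The analogous conclusion for $L^*$ follows by applying the same Krein-Rutman argument to $L^*$ with boundary condition $B^* \phi^* = 0$, producing a real, algebraically simple principal eigenvalue $\lambda_1(L^*)$ with a strictly positive eigenfunction; the divergence identity defining $L^*$ makes $(L, B)$ and $(L^*, B^*)$ a dual pair, so $\lambda_1(L^*) = \lambda_1(L) = 0$. The main technical obstacle is the bookkeeping in the integration-by-parts step: one must track the boundary terms carefully so that the Robin condition on $\phi$ precisely absorbs the $\partial f / \partial \nu$ contribution, leaving exactly the mixed boundary integral that defines the $\mathcal{L}_0$ Rayleigh quotient. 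Everything else reduces to standard spectral theory of non-symmetric elliptic operators on compact manifolds with oblique boundary conditions.
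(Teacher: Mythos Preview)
Your proposal is correct and follows essentially the same approach as the paper, which omits the proof and refers to Lemma~3.5 in~\cite{Mendes2022}: that argument also combines Krein--Rutman theory for $(L,B)$ with the logarithmic substitution $f=\log\phi$ to compare $\lambda_1(L)$ with $\lambda_1(\mathcal L_0)$, the boundary terms cancelling exactly as you describe via the Robin condition $B\phi=0$. Your handling of the adjoint pair $(L^*,B^*)$ and of the inequality $\lambda_1(\mathcal L_0)\ge\lambda_1(L)$ via the identity $Q=\lambda_1(L)+|X-\nabla f|^2-\operatorname{div}(X-\nabla f)$ is the standard Galloway--Schoen mechanism adapted to the free boundary setting, which is precisely what the reference carries out.
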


The proof of the above lemma is entirely analogous to the proof of Lemma~3.5 in~\cite{Mendes2022} (with Proposition~\ref{prop:aux2} playing the role of Proposition~3.3), and therefore it will be omitted.

\begin{lem}\label{lem:aux2}
Under the hypotheses of Prop.~\ref{prop:aux2}, there exist a neighborhood $V \cong (-\delta, \delta) \times \Sigma$ of $\Sigma \cong \{0\} \times \Sigma$ in $M$ and a positive function $\varphi: V \to \mathbb{R}$ such that:
\begin{enumerate}
\item[$(1)$] $g|_V$ has the orthogonal decomposition,
$$
g|_V = \varphi^2 dt^2 + h_t,
$$
where $h_t$ is the induced metric on $\Sigma_t \cong \{t\} \times \Sigma$;
\item[$(2)$] Each $\Sigma_t$ is a free boundary hypersurface in $(M, g, K)$ with constant null mean\linebreak curvature $\theta^+(t)$ with respect to the outward unit normal $N_t = \varphi^{-1} \frac{\partial}{\partial t}$, where $N_0 = N$;
\item[$(3)$] $\frac{\partial \varphi}{\partial \nu_t} = \mathrm{II}^{\partial M}(N_t, N_t) \varphi$ on $\partial \Sigma_t$, where $\nu_t$ is the outward unit normal to $\partial \Sigma_t$ in $(\Sigma_t, h_t)$.
\end{enumerate}
\end{lem}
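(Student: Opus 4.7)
The plan is to construct the foliation $\{\Sigma_t\}_{|t|<\delta}$ via an Implicit Function Theorem argument, using the positive eigenfunctions $\phi,\phi^*$ supplied by Lemma~\ref{lem:aux1} to handle the one-dimensional kernel and cokernel of the stability operator, and then to rewrite the resulting geometry in orthogonal gauge. The key analytic inputs are the linearization formula~\eqref{eq:variacaotheta} for $\theta^+$ and Proposition~\ref{prop:Ambrozio} for the boundary angle. First I would choose a smooth extension $\tilde N$ of $N$ to a tubular neighborhood of $\Sigma$ in $M$ such that $\tilde N$ is tangent to $\partial M$ along $\partial M$ near $\partial\Sigma$, which is possible precisely because $\Sigma$ is free boundary. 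For $u\in C^{2,\alpha}(\Sigma)$ of small norm, the map $F_u(x)=\exp_x(u(x)\tilde N(x))$ produces a hypersurface $\Sigma_u\subset M$ with $\partial\Sigma_u\subset\partial M$, though not necessarily orthogonally. Letting $\theta(u)\in C^{0,\alpha}(\Sigma)$ denote the pullback to $\Sigma$ of the null mean curvature of $\Sigma_u$ in $(M,g,K)$ and $\beta(u)\in C^{1,\alpha}(\partial\Sigma)$ the pullback of $g(N_u,\varrho)$ along $\partial\Sigma_u$, I would encode the problem through the map
\[
\Psi\colon C^{2,\alpha}(\Sigma)\times\mathbb{R}\to C^{0,\alpha}(\Sigma)\times C^{1,\alpha}(\partial\Sigma),\quad \Psi(u,s)=(\theta(u)-s,\beta(u)),
\]
whose linearization at $(0,0)$ along $(v,\sigma)$ equals $(Lv-\sigma,-Bv)$, by~\eqref{eq:variacaotheta} and Proposition~\ref{prop:Ambrozio}.

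Next I would fix the complement $W=\{v\in C^{2,\alpha}(\Sigma)\colon\int_\Sigma v\phi^*\,dv=0\}$ of $\mathbb{R}\phi$. The Green identity pairing $L$ with $L^*$ under $B$ and $B^*$, combined with Lemma~\ref{lem:aux1}, shows that the mixed problem $Lv=f$, $Bv=g$ carries a single scalar solvability obstruction involving $\int_\Sigma f\phi^*\,dv$ and $\int_{\partial\Sigma}g\phi^*\,ds$, and this obstruction is exactly what the free parameter $\sigma$ in the second slot of $\Psi$ is able to absorb; positivity of $\phi,\phi^*$ guarantees $\int_\Sigma\phi\phi^*\,dv>0$ and $\int_\Sigma\phi^*\,dv>0$, so the restricted linearization $W\times\mathbb{R}\to C^{0,\alpha}(\Sigma)\times C^{1,\alpha}(\partial\Sigma)$ is an isomorphism. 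The Implicit Function Theorem then produces smooth curves $w(t)\in W$ and $s(t)\in\mathbb{R}$ with $(w(0),s(0))=(0,0)$ such that $u(t):=t\phi+w(t)$ satisfies $\Psi(u(t),s(t))=0$ for $t\in(-\delta,\delta)$. Each $\Sigma_{u(t)}$ is therefore a free boundary hypersurface with constant null mean curvature $s(t)$, and since $u'(0)=\phi>0$ the family foliates a two-sided neighborhood of $\Sigma$; relabeling $\Sigma_t:=\Sigma_{u(t)}$ gives property~(2).

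To obtain~(1) I would reparameterize the foliation by flowing along the unit normal $N_t$ of the leaves, adjusting the time parameter so that the new variational vector field $\partial_t$ becomes everywhere normal to $\Sigma_t$; in these coordinates $g|_V=\varphi^2 dt^2+h_t$ with $\varphi=g(\partial_t,N_t)>0$. For~(3), the free boundary condition forces $g(N_t,\varrho)\equiv 0$ on $\partial\Sigma_t$ for every $t$, so its $t$-derivative vanishes identically, and Proposition~\ref{prop:Ambrozio} applied with $\partial_t=\varphi N_t$ then yields $0=-\partial\varphi/\partial\nu_t+\II^{\partial M}(N_t,N_t)\varphi$ on $\partial\Sigma_t$, as required. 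The main obstacle will be the Fredholm analysis of the non-self-adjoint mixed boundary value problem for $L$, specifically verifying that its cokernel under the Robin condition $B$ is exactly $\mathrm{span}(\phi^*)$ detected against the scalar $\sigma$; this is where Lemma~\ref{lem:aux1} is indispensable, as the joint positivity of $\phi$ and $\phi^*$ is what makes $\sigma$ genuinely absorb the one-dimensional obstruction so that the Implicit Function Theorem closes.
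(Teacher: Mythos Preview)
Your proposal is correct and follows essentially the same route as the paper. The paper does not give an explicit proof of this lemma but refers to Lemma~3.6 in~\cite{Mendes2022}, stating that the argument carries over verbatim once Lemma~\ref{lem:aux1} is used in place of the corresponding eigenvalue lemma there; that referenced argument is precisely the Implicit Function Theorem construction you outline, built on the positive eigenfunctions $\phi$ and $\phi^*$ of $(L,B)$ and $(L^*,B^*)$, followed by passage to the orthogonal gauge and the application of Proposition~\ref{prop:Ambrozio} to obtain the boundary identity in~(3).

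One minor technical point worth tightening: for $x\in\partial\Sigma$, even if $\tilde N(x)\in T_x\partial M$, the ambient geodesic $s\mapsto\exp_x(s\tilde N(x))$ need not remain in $\partial M$, so $\partial\Sigma_u\subset\partial M$ is not automatic from your description of $F_u$. The standard fix is to work in Fermi-type coordinates adapted to $\partial M$ (or to use the flow of a vector field that is genuinely tangent to $\partial M$ in a full neighborhood, not just along $\partial\Sigma$), so that boundary points are moved within $\partial M$ by construction. This is routine and does not affect the structure of your argument.
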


Once again, the proof of the above lemma is entirely analogous to the proof of Lemma~3.6 in~\cite{Mendes2022} (with Lemma~\ref{lem:aux1} playing the role of Lemma~3.5), and therefore it will also be omitted.

We can now proceed to the proof of Theorem~\ref{theo:B}.

\begin{proof}[Proof of Theorem~\ref{theo:B}]
First, since $\mu+J(N)\ge\mu-|J|\ge-c$ on $M_+$ and, in particular, on $\Sigma$, it follows from item (1) of Proposition~\ref{prop:aux1} that
\begin{align*}
\operatorname{vol}(\Sigma) \geq \left(\frac{|\sigma^{1,0}(\Sigma, \partial\Sigma)|}{2c}\right)^{\frac{n}{2}}.
\end{align*}
Furthermore, if equality holds, Proposition~\ref{prop:aux2} furnishes that $(\Sigma,h)$ is Einstein with scalar curvature $R^\Sigma=-2c$ and a totally geodesic boundary. Moreover, we can apply Lemma~\ref{lem:aux2} to guarantee the existence of a foliation $(\Sigma_t)_{0\le t<\delta}$ of an outer neighborhood $V$ of $\Sigma = \Sigma_0$ in~$M$ such that each leaf $\Sigma_t$ is a free boundary hypersurface with constant null mean curvature $\theta^+=\theta^+(t)$ in $(M,g,K)$. Furthermore, the metric $g$ can be written as
$$
g = \varphi^2 dt^2 + h_t\quad\mbox{on}\quad V \cong [0,\delta) \times \Sigma,
$$
where $h_t = g|_{\Sigma_t}$ is the induced metric on $\Sigma_t$.

On the other hand, from~\eqref{eq:variacaotheta}, it follows that
\begin{align}\label{eq1:theo:main2}
\frac{d\theta^+}{dt} = -\Delta\varphi + 2\langle X, \nabla\varphi \rangle + \Big(Q - |X|^2 + \div X - \frac{1}{2}(\theta^+)^2 + \theta^+ \tau \Big) \varphi\quad\mbox{on}\quad\Sigma_t,
\end{align}
for each $t\in[0,\delta)$. Thus, taking $Y = X - \varphi^{-1} \nabla \varphi$ and observing that
\begin{align*}
\div Y = \div X - \frac{\Delta\varphi}{\varphi} + \frac{|\nabla\varphi|^2}{\varphi^2}\quad\mbox{and}\quad
|Y|^2 = |X|^2 - 2\frac{\langle X, \nabla\varphi \rangle}{\varphi} + \frac{|\nabla\varphi|^2}{\varphi^2},
\end{align*}
we get, from~\eqref{eq1:theo:main2}, that
\begin{align*}
\frac{1}{\varphi}\frac{d\theta^+}{dt} = \div Y - |Y|^2 + Q - \frac{1}{2}(\theta^+)^2 + \theta^+ \tau \le \div Y - |Y|^2 + Q + \theta^+ \tau\quad\mbox{on}\quad\Sigma_t,
\end{align*}
for each $t \in [0,\delta)$. Therefore, given $u \in C^\infty(\Sigma_t)$, we have:
\begin{eqnarray}
\frac{u^2}{\varphi}\frac{d\theta^+}{dt} - u^2 \theta^+ \tau &\le& u^2 \div Y - u^2 |Y|^2 + Q u^2 \nonumber \\
&=& \div(u^2 Y) - 2u \langle \nabla u, Y \rangle - u^2 |Y|^2 + \Big(\frac{1}{2} R^{\Sigma_t} - (\mu + J(N_t)) - \frac{1}{2} |\chi_t^+|^2\Big) u^2 \nonumber \\
&\le& \div(u^2 Y) + 2|u||\nabla u||Y| - u^2 |Y|^2 + \Big(\frac{1}{2} R^{\Sigma_t} - (\mu - |J|)\Big) u^2 \nonumber \\
&\le& \div(u^2 Y) + |\nabla u|^2 + \left(\frac{1}{2} R^{\Sigma_t} + c\right) u^2, \label{eq2:theo:main2}
\end{eqnarray}
where, above, we have used the Cauchy-Schwarz inequality, along with the fact that
\begin{align*}
\mu + J(N_t) \ge \mu - |J| \ge -c\quad\mbox{on}\quad M_+.
\end{align*}
Thus, integrating~\eqref{eq2:theo:main2} over $\Sigma_t$ and noting that $\frac{d\theta^+}{dt}$ and $\theta^+$ are constants on $\Sigma_t$, we obtain
\begin{eqnarray}
2\left(\frac{d\theta^+}{dt} \int_{\Sigma_t} \frac{u^2}{\varphi} dv_t - \theta^+ \int_{\Sigma_t} \tau u^2 dv_t\right) &\le& \int_{\Sigma_t} (2|\nabla u|^2 + R^{\Sigma_t} u^2) dv_t + 2c \int_{\Sigma_t} u^2 dv_t \nonumber \\
&& + 2 \int_{\partial \Sigma_t} u^2 \langle Y, \nu_t \rangle ds_t, \label{eq3:theo:main2}
\end{eqnarray}
where, above, we have used the divergence theorem.

Now, replacing \( Y \) by \( X - \varphi^{-1} \nabla \varphi \) and using that
$$
\dfrac{\partial \varphi}{\partial \nu_t} = \operatorname{II}^{\partial M}(N_t, N_t) \varphi = (H^{\partial M} - H^{\partial \Sigma_t}) \varphi
$$
(see Lemma~\ref{lem:aux2}), we obtain:
\begin{align*}
\int_{\partial \Sigma_t} u^2 \langle Y, \nu_t \rangle ds_t &= \int_{\partial \Sigma_t} u^2 \langle X - \varphi^{-1} \nabla \varphi, \nu_t \rangle ds_t \\
&= \int_{\partial \Sigma_t} u^2 ( \langle X, \nu_t \rangle + H^{\partial \Sigma_t} - H^{\partial M} ) ds_t.
\end{align*}
On the other hand, using the BDEC as done in the proof of Proposition~\ref{prop:aux1}, we get that \( \langle X, \nu_t \rangle - H^{\partial M} \leq 0 \) on $\partial\Sigma_t$. Therefore,
\begin{equation}\label{eq4:theo:main2}
\int_{\partial \Sigma_t} u^2 \langle Y, \nu_t \rangle ds_t \leq \int_{\partial \Sigma_t} u^2 H^{\partial \Sigma_t} ds_t.
\end{equation}
Thus, substituting (\ref{eq4:theo:main2}) into (\ref{eq3:theo:main2}) and using the fact that \( a_n = \frac{4(n-1)}{n-2} > 2 \) for all \( n \geq 3 \), along with Hölder's inequality, we obtain
\begin{eqnarray}
2\left(\frac{d\theta^+}{dt} \int_{\Sigma_t} \frac{u^2}{\varphi} dv_t - \theta^+ \int_{\Sigma_t} \tau u^2 dv_t\right) &\le& \int_{\Sigma_t}( a_n |\nabla u|^2 + R^{\Sigma_t} u^2 ) dv_t + 2 \int_{\partial \Sigma_t} u^2 H^{\partial \Sigma_t} ds_t \nonumber \\
&& + 2c \operatorname{vol}(\Sigma_t)^{\frac{2}{n}} \left( \int_{\Sigma_t} u^{\frac{2n}{n-2}} dv_t \right)^{\frac{n-2}{n}} \label{eq5:theo:main2}
\end{eqnarray}
for all \( u \in C^\infty(\Sigma_t) \) and all \( t \in [0, \delta) \).

From the solution to the Yamabe problem, together with Theorem~\ref{theo:Escobar}, we know that, for each \( t \in [0, \delta) \), there exists a unique Yamabe metric \( \Bar{h}_t \) in the conformal class of \( h_t \) with constant scalar curvature \(-2c \) and minimal boundary. Let \( u_t \in C^\infty(\Sigma_t) \), \( u_t > 0 \), be such that \( \Bar{h}_t = u_t^{4/(n-2)} h_t \). Substituting \( u = u_t \) into equation~\eqref{eq5:theo:main2}, we obtain
\begin{eqnarray}
\frac{2\Big(\ds\frac{d\theta^+}{dt} \int_{\Sigma_t} \frac{u_t^2}{\varphi} dv_t - \theta^+ \int_{\Sigma_t} \tau u_t^2 dv_t \Big)}{\Big(\ds\int_{\Sigma_t} u_t^{\frac{2n}{n-2}} dv_t \Big)^{\frac{n-2}{n}}} &\le & \frac{\ds\int_{\Sigma_t}( a_n |\nabla u_t|^2 + R^{\Sigma_t} u_t^2) dv_t + 2 \int_{\partial \Sigma_t} u_t^2 H^{\partial \Sigma_t} ds_t}{\Big(\ds\int_{\Sigma_t} u_t^{\frac{2n}{n-2}} dv_t \Big)^{\frac{n-2}{n}}} \nonumber\\
&& + 2c \operatorname{vol}(\Sigma_t)^{\frac{2}{n}} \nonumber\\
&=& Q_{h_t}^{1,0}(u_t) + 2c \operatorname{vol}(\Sigma_t)^{\frac{2}{n}} \nonumber\\
&=& Q_{h_t}^{1,0}(\Sigma_t, \partial \Sigma_t) + 2c \operatorname{vol}(\Sigma_t)^{\frac{2}{n}} \nonumber\\
&\le& \sigma^{1,0}(\Sigma, \partial \Sigma) + 2c \operatorname{vol}(\Sigma_t)^{\frac{2}{n}} \nonumber\\
&=& 2c \operatorname{vol}(\Sigma_t)^{\frac{2}{n}} - 2c \operatorname{vol}(\Sigma)^{\frac{2}{n}} \nonumber\\
&=& 2c \int_0^t \frac{d}{dr}( \operatorname{vol}(\Sigma_r)^{\frac{2}{n}} ) dr \nonumber\\
&=& \frac{4c}{n} \int_0^t \operatorname{vol}(\Sigma_r)^{\frac{2-n}{n}} \frac{d}{dr} \operatorname{vol}(\Sigma_r) dr\label{eq.4.11}
\end{eqnarray}
for each \( t \in [0, \delta) \). Above, we have used that \( \Sigma_t \) is diffeomorphic to \( \Sigma \) and \( \sigma^{1,0}(\Sigma, \partial \Sigma) \) is a topological invariant of \( \Sigma \). We have also applied the fundamental theorem of calculus and the assumption that
\begin{align*}
\operatorname{vol}(\Sigma) = \left(\frac{|\sigma^{1,0}(\Sigma, \partial\Sigma)|}{2c}\right)^{\frac{n}{2}}.
\end{align*}

On the other hand, using the assumption that \( K \) is \( n \)-convex on \( M_+ \), we have \( \text{tr}_{\Sigma_t}K \ge 0 \) for each \( t \in [0,\delta) \). Thus, \( \theta^+(t) = \text{tr}_{\Sigma_t}K + H^{\Sigma_t} \ge H^{\Sigma_t} \). Therefore, from the first variation of volume formula, along with~\eqref{eq.4.11}, we get
\begin{align*}
\frac{\ds\frac{d\theta^+}{dt}\int_{\Sigma_t}\frac{u_t^2}{\varphi}dv_t - \theta^+ \int_{\Sigma_t} \tau u_t^2 dv_t}{\Big(\ds\int_{\Sigma_t} u_t^{\frac{2n}{n-2}} dv_t \Big)^{\frac{n-2}{n}}} &\le \frac{2c}{n} \int_0^t \operatorname{vol}(\Sigma_r)^{\frac{2-n}{n}} \Big( \int_{\Sigma_r} H^{\Sigma_r} \varphi dv_r \Big) dr\\
&\le \frac{2c}{n} \int_0^t \theta^+(r) \Big( \operatorname{vol}(\Sigma_r)^{\frac{2-n}{n}} \int_{\Sigma_r} \varphi dv_r \Big) dr.
\end{align*}

Thus, defining
\[
\begin{gathered}
f(t) = \theta^+(t), \quad \eta(t) = \Big( \int_{\Sigma_t} \frac{u_t^2}{\varphi} dv_t \Big) \Big( \int_{\Sigma_t} u_t^{\frac{2n}{n-2}} dv_t \Big)^{-\frac{n-2}{n}}, \\
\rho(t) = \Big( \int_{\Sigma_t} \tau u_t^2 dv_t \Big) \Big( \int_{\Sigma_t} u_t^{\frac{2n}{n-2}} dv_t \Big)^{-\frac{n-2}{n}},
\end{gathered}
\]
and
\[
\xi(t) = \frac{2c}{n} \operatorname{vol}(\Sigma_t)^{\frac{2-n}{n}} \int_{\Sigma_t} \varphi dv_t,
\]
we can apply Lemma~\ref{lem:Mendes4} to obtain that \( \theta^+(t) \le 0 \) for each \( t \in [0,\delta) \). However, since \( \Sigma \) is weakly outermost and \( \theta^+(t) \) is constant on \( \Sigma_t \), it follows that \( \theta^+(t) = 0 \) for all \( t \in [0,\delta) \). Therefore, all the above inequalities are, in fact, equalities. 

In particular, for each $t\in[0,\delta)$, we have:
\begin{itemize}
\item \( \operatorname{div} Y - |Y|^2 + Q = 0 \) on $\Sigma_t$;
\item \( \chi_t^+ \equiv 0 \);
\item \( \mu + J(N_t) = \mu - |J| = -c \) on $\Sigma_t$;
\item \(H^{\Sigma_t}=\theta^+(t)=0\), \emph{i.e.}\ $\Sigma_t$ is a minimal MOTS (in particular, $\operatorname{tr}_{\Sigma_t}K=0$);
\item \( \langle X, \nu_t \rangle - H^{\partial M} = 0 \) on $\partial\Sigma_t$;
\item \(\operatorname{vol}(\Sigma_t)=\operatorname{vol}(\Sigma)\).
\end{itemize}

Additionally, since \( \Sigma_t \) is a MOTS for each \( t \in [0,\delta) \), we can use \eqref{eq:variacaotheta} again to obtain
\[
L \varphi = \frac{d\theta^+}{dt} = 0 \quad \text{on} \quad \Sigma_t.
\]
By Lemma~\ref{lem:aux2}, we get
\[
\frac{\partial \varphi}{\partial \nu_t} - \operatorname{II}^{\partial M}(N_t, N_t) \varphi = 0 \quad \text{on} \quad \partial \Sigma_t.
\]
Thus, we conclude that \( \Sigma_t \) is stable for each \( t \in [0,\delta) \). Therefore, we can apply Proposition~\ref{prop:aux2} to \( \Sigma_t \) and conclude that \( H^{\partial \Sigma_t} = 0 \) on $\partial\Sigma_t$ and \( Q = 0 \) on $\Sigma_t$. (From Proposition~\ref{prop:aux2}, we also have that $H^{\partial M} = |(\iota_\varrho\pi)^\top|$ on $\partial\Sigma_t$ for each $t\in[0,\delta)$, that is, the BDEC is saturated along $V\cap\partial M$.)

From \( \operatorname{div} Y - |Y|^2 + Q = 0 \) and \( Q = 0 \), we get \( \operatorname{div} Y - |Y|^2 = 0 \), and so
\begin{align*}
\int_{\Sigma_t} |Y|^2 dv_t &= \int_{\Sigma_t} \operatorname{div} Y dv_t \\
&= \int_{\partial \Sigma_t} \langle Y, \nu_t \rangle ds_t \\
&= \int_{\partial \Sigma_t} ( \langle X, \nu_t \rangle + H^{\partial \Sigma_t} - H^{\partial M} ) ds_t \\
&= \int_{\partial \Sigma_t}( \langle X, \nu_t \rangle - H^{\partial M}) ds_t \\
&= 0.
\end{align*}
Hence \( Y = X - \varphi^{-1} \nabla \varphi = 0 \).

Since $\Sigma_t$ is a minimal MOTS, we also have that \( \theta^-(t) = \operatorname{tr}_{\Sigma_t} K - H^{\Sigma_t}=0 \) for each $t\in[0,\delta)$. Then, by replacing \( \theta^{+} \) and \( \varphi \) by \( \theta^{-} \) and \( \varphi^{-} = -\varphi \) into (\ref{eq:variacaotheta}), respectively, we obtain:
\begin{align}
0 = \frac{d\theta^{-}}{dt} = -\Delta \varphi^{-} + 2 \langle X^{-}, \nabla \varphi^{-} \rangle + ( Q^{-} - |X^{-}|^2 + \operatorname{div} X^{-}) \varphi^{-},\label{eq3.2}
\end{align}
where
\begin{align}
Q^{-} &= \frac{1}{2} R^{\Sigma_t} - (\mu + J(-N_t)) - \frac{1}{2} |\chi_t^{-}|^2\nonumber\\
&= -c - (\mu + |J|) - \frac{1}{2} |\chi_t^{-}|^2\nonumber\\
&= -2|J| - \frac{1}{2} |\chi_t^{-}|^2,\label{eq3.3}
\end{align}
and
\begin{align}
X^{-} = -X = -\frac{1}{\varphi} \nabla \varphi.\label{eq3.4}
\end{align}

Substituting equations (\ref{eq3.3}) and (\ref{eq3.4}) into (\ref{eq3.2}), we get:
\[
\Delta \varphi + \frac{|\nabla \varphi|^2}{\varphi} + \Big( |J| + \frac{1}{4} |\chi_t^{-}|^2 \Big) \varphi = 0.
\]
Since
\[
\frac{\partial \varphi}{\partial \nu_t} = \operatorname{II}^{\partial M}(N_t, N_t) \varphi = (H^{\partial M} - H^{\partial \Sigma_t}) \varphi = H^{\partial M} \varphi \geq 0,
\]
integrating over \( \Sigma_t \), and using the Divergence Theorem, we have:
\[
0 \leq \int_{\partial \Sigma_t} \frac{\partial \varphi}{\partial \nu_t} ds_t = \int_{\Sigma_t} \Delta \varphi dv_t = - \int_{\Sigma_t} \left( \frac{|\nabla \varphi|^2}{\varphi} + \Big( |J| + \frac{1}{4} |\chi_t^{-}|^2 \Big) \varphi \right) dv_t \leq 0,
\]
so
\[
|\nabla \varphi| = |\chi_t^{-}| = |J| = 0 \quad \text{on}\quad V.
\]

Thus, \( \varphi \) is constant on $\Sigma_t$ and, since \( X = -\varphi^{-1} \nabla \varphi = 0 \), we have \( K(N_t, \cdot) |_{\Sigma_t} = 0 \). Furthermore, since \( \chi_t^+ = K|_{\Sigma_t} + A_t\equiv0 \) and \( \chi_t^- = K|_{\Sigma_t} - A_t\equiv0 \), we have \( K|_{\Sigma_t} \equiv 0 \).

We conclude that \(\Sigma_t\) is totally geodesic in \( (M, g) \) for each \( t \in [0, \delta) \). Writing \( g = \varphi^2 dt^2 + h_t \) on \( V \cong [0, \delta) \times \Sigma \), and noting that \( \varphi = \varphi_t \) depends only on \( t \in [0, \delta) \) and that \( h_t \) does not depend on \( t \), since \( \Sigma_t \) is totally geodesic for each $t$, we can see that, by a change of variables, \( g \) has the structure of a product metric \( dt^2 + h \) on \( V \), where \( (\Sigma, h) \) is Einstein with scalar curvature \( R^\Sigma = -2c \) and a totally geodesic boundary.

Finally, using that \( 0 = J = \operatorname{div}(K - (\operatorname{tr} K) g) = \operatorname{div} K - d(\operatorname{tr} K) \), \( K |_{\Sigma_t}\equiv 0\), $A_t\equiv0$, and $K(N_t, \cdot)|_{\Sigma_t} \equiv 0$, we conclude that \( K = a\, dt^2 \) on \( V \), where \( a \) depends only on \( t \in [0, \delta) \).
\end{proof}

\begin{example}
Let \((\Sigma^n, h_{\text{hyp}})\) be an \(n\)-dimensional closed hyperbolic Riemannian manifold, where \(n \geq 2\). The \((n+2)\)-dimensional anti-Nariai spacetime \((\bar{M}, \bar{g})\) is given by the manifold  
\[
\bar{M} = \mathbb{R} \times (0, \infty) \times \Sigma^n
\]
equipped with the Lorentzian metric  
\begin{align*}  
\bar{g} = \frac{n}{2(-\Lambda)} \left( -\sinh^2 \chi \, dt^2 + d\chi^2 + (n-1) h_{\text{hyp}} \right),  
\end{align*}  
which satisfies the Einstein vacuum equation  
\begin{align*}
\operatorname{Ric}_{\bar{M}}=\frac{2\Lambda}{n}\bar{g},
\end{align*}
where \(\Lambda\) is a negative cosmological constant and \(\operatorname{Ric}_{\bar{M}}\) denotes the Ricci tensor of \((\bar{M},\bar{g})\) (See~\cite{CardosoDiasLemos} for a detailed description of the anti-Nariai spacetimes.)  

It is straightforward to verify that the second fundamental form \(K\) of the slice \(M = \{t = t_0\}\) vanishes. Furthermore, the local energy density \(\mu\) and the local current density \(J\) of \((M, g, K)\) (where \(g\) is the induced metric on \(M\)) are given by \(\mu = \Lambda\) and \(J = 0\).  

Now, assume that \((\Sigma^n, h_{\text{hyp}})\) realizes the Yamabe invariant (this always holds for \(n = 2\) and \(n = 3\), see \emph{e.g.}\ \cite{Anderson2006}):  
\begin{align*}
\sigma(\Sigma) = \frac{\displaystyle\int_\Sigma R^\Sigma dv}{\operatorname{vol}(\Sigma)^{\frac{n-2}{n}}} = -n(n-1) \operatorname{vol}(\Sigma)^{\frac{2}{n}} = 2\Lambda \operatorname{vol}(\Sigma, h)^{\frac{2}{n}},
\end{align*}
where \(h = \frac{n(n-1)}{2(-\Lambda)} h_{\text{hyp}}\) is the induced metric on \(\Sigma\). Here, $R^\Sigma=-n(n-1)$, $dv$, and $\operatorname{vol}(\Sigma)$ are the scalar curvature, the volume element, and the volume of $\Sigma$ with respect to $h_{\text{hyp}}$. Therefore,  
\begin{align*}
\operatorname{vol}(\Sigma, h) = \left(\frac{|\sigma(\Sigma)|}{2(-\Lambda)}\right)^{\frac{n}{2}}.
\end{align*}

This provides an example of an initial data set that satisfies all the hypotheses and saturates the inequality of Theorem~\ref{theo:Mendes1} (for \(n=2\)) and Theorem~\ref{theo:Mendes2} (for \(n \geq 3\)).  

In a similar manner, we can construct an initial data set satisfying all the hypotheses of Theorem~\ref{theo:B}, which also saturates the respective inequality, by considering a compact hyperbolic manifold \((\Sigma^n, h_{\text{hyp}})\) with a totally geodesic boundary. 
\end{example}

\bibliographystyle{amsplain}
\bibliography{bibliography2.bib}

\providecommand{\bysame}{\leavevmode\hbox to3em{\hrulefill}\thinspace}
\providecommand{\MR}{\relax\ifhmode\unskip\space\fi MR }
\providecommand{\MRhref}[2]{%
  \href{http://www.ams.org/mathscinet-getitem?mr=#1}{#2}
}
\providecommand{\href}[2]{#2}
\begin{thebibliography}{10}

\bibitem{AlaeeLesourdYau2021}
A.~Alaee, M.~Lesourd, and S.-T. Yau, \emph{Stable surfaces and free boundary
  marginally outer trapped surfaces}, Calc. Var. Partial Differ. Equ.
  \textbf{60} (2021), no.~5, 27 (English), Id/No 186.

\bibitem{AlmarazLimaMari2021}
S.~Almaraz, L.~L. de~Lima, and L.~Mari, \emph{Spacetime positive mass theorems
  for initial data sets with non-compact boundary}, Int. Math. Res. Not.
  \textbf{2021} (2021), no.~4, 2783--2841 (English).

\bibitem{Ambrozio2015}
L.~C. Ambrozio, \emph{Rigidity of area-minimizing free boundary surfaces in
  mean convex three-manifolds}, J. Geom. Anal. \textbf{25} (2015), no.~2,
  1001--1017 (English).

\bibitem{Anderson2006}
Michael~T. Anderson, \emph{Canonical metrics on 3-manifolds and 4-manifolds},
  Asian J. Math. \textbf{10} (2006), no.~1, 127--164 (English).

\bibitem{AnderssonMarsSimon2008}
L.~Andersson, M.~Mars, and W.~Simon, \emph{Stability of marginally outer
  trapped surfaces and existence of marginally outer trapped tubes}, Adv.
  Theor. Math. Phys. \textbf{12} (2008), no.~4, 853--888 (English).

\bibitem{AnderssonMetzger2009}
L.~Andersson and J.~Metzger, \emph{The area of horizons and the trapped
  region}, Commun. Math. Phys. \textbf{290} (2009), no.~3, 941--972 (English).

\bibitem{Aubin1976}
T.~Aubin, \emph{Equations diff{\'e}rentielles non lin{\'e}aires et probl{\`e}me
  de {Yamabe} concernant la courbure scalaire}, J. Math. Pures Appl. (9)
  \textbf{55} (1976), 269--296 (French).

\bibitem{BarrosCruz2020}
A.~Barros and C.~Cruz, \emph{Free boundary hypersurfaces with non-positive
  {Yamabe} invariant in mean convex manifolds}, J. Geom. Anal. \textbf{30}
  (2020), no.~4, 3542--3562 (English).

\bibitem{BarrosCruzBatistaSousa2015}
A.~Barros, C.~Cruz, R.~Batista, and P.~Sousa, \emph{Rigidity in dimension four
  of area-minimising {Einstein} manifolds}, Math. Proc. Camb. Philos. Soc.
  \textbf{158} (2015), no.~2, 355--363 (English).

\bibitem{BrayBrendleNeves2010}
H.~Bray, S.~Brendle, and A.~Neves, \emph{Rigidity of area-minimizing
  two-spheres in three-manifolds}, Commun. Anal. Geom. \textbf{18} (2010),
  no.~4, 821--830 (English).

\bibitem{BrendleChen2014}
S.~Brendle and S.~S. Chen, \emph{An existence theorem for the {Yamabe} problem
  on manifolds with boundary}, J. Eur. Math. Soc. (JEMS) \textbf{16} (2014),
  no.~5, 991--1016 (English).

\bibitem{Cai2002}
M.~Cai, \emph{Volume minimizing hypersurfaces in manifolds of nonnegative
  scalar curvature}, Minimal surfaces, geometric analysis and symplectic
  geometry. Based on the lectures of the workshop and conference, Johns Hopkins
  University, Baltimore, MD, USA, March 16--21, 1999, Tokyo: Mathematical
  Society of Japan, 2002, pp.~1--7 (English).

\bibitem{CaiGalloway2000}
M.~Cai and G.~J. Galloway, \emph{Rigidity of area minimizing tori in
  3-manifolds of nonnegative scalar curvature}, Commun. Anal. Geom. \textbf{8}
  (2000), no.~3, 565--573 (English).

\bibitem{CardosoDiasLemos}
Vitor Cardoso, \'Oscar J.~C. Dias, and Jos\'e P.~S. Lemos, \emph{Nariai,
  bertotti-robinson, and anti-nariai solutions in higher dimensions}, Phys.
  Rev. D \textbf{70} (2004), 024002.

\bibitem{Chen2010}
S.~S. Chen, \emph{Conformal deformation to scalar flat metrics with constant
  mean curvature on the boundary in higher dimensions}, 2010.

\bibitem{CruzSantos2023}
T.~Cruz and A.~S. Santos, \emph{Critical metrics and curvature of metrics with
  unit volume or unit area of the boundary}, J. Geom. Anal. \textbf{33} (2023),
  no.~1, 34 (English), Id/No 22.

\bibitem{Almaraz2010}
S.~de~Moura~Almaraz, \emph{An existence theorem of conformal scalar-flat
  metrics on manifolds with boundary}, Pac. J. Math. \textbf{248} (2010),
  no.~1, 1--22 (English).

\bibitem{Deng2021}
H.~Deng, \emph{A {B}ray-{B}rendle-{N}eves type inequality for a {Riemannian}
  manifold}, Acta Math. Sci., Ser. B, Engl. Ed. \textbf{41} (2021), no.~2,
  487--492 (English).

\bibitem{EckerHuisken1991}
K.~Ecker and G.~Huisken, \emph{Interior estimates for hypersurfaces moving by
  mean curvature}, Invent. Math. \textbf{105} (1991), no.~3, 547--569
  (English).

\bibitem{Escobar1992v2}
J.~F. Escobar, \emph{Conformal deformation of a {Riemannian} metric to a scalar
  flat metric with constant mean curvature on the boundary}, Ann. Math. (2)
  \textbf{136} (1992), no.~1, 1--50 (English).

\bibitem{Escobar1992}
\bysame, \emph{The {Yamabe} problem on manifolds with boundary}, J. Differ.
  Geom. \textbf{35} (1992), no.~1, 21--84 (English).

\bibitem{Escobar2003}
\bysame, \emph{Uniqueness and non-uniqueness of metrics with prescribed scalar
  and mean curvature on compact manifolds with boundary.}, J. Funct. Anal.
  \textbf{202} (2003), no.~2, 424--442 (English).

\bibitem{Fischer-ColbrieSchoen1980}
D.~Fischer-Colbrie and R.~Schoen, \emph{The structure of complete stable
  minimal surfaces in 3-manifolds of non- negative scalar curvature}, Commun.
  Pure Appl. Math. \textbf{33} (1980), 199--211 (English).

\bibitem{Galloway2011}
G.~J. Galloway, \emph{Stability and rigidity of extremal surfaces in
  {Riemannian} geometry and general relativity}, Surveys in geometric analysis
  and relativity. Dedicated to Richard Schoen in honor of his 60th birthday,
  Somerville, MA: International Press; Beijing: Higher Education Press, 2011,
  pp.~221--239 (English).

\bibitem{GallowayJang2020}
G.~J. Galloway and H.~C. Jang, \emph{Some scalar curvature warped product
  splitting theorems}, Proc. Am. Math. Soc. \textbf{148} (2020), no.~6,
  2617--2629 (English).

\bibitem{GallowayMendes2024}
G.~J. Galloway and A.~Mendes, \emph{Aspects of the geometry and topology of
  expanding horizons}, Proc. Am. Math. Soc., to appear.

\bibitem{GallowaySchoen2006}
G.~J. Galloway and R.~Schoen, \emph{A generalization of {Hawking}'s black hole
  topology theorem to higher dimensions}, Commun. Math. Phys. \textbf{266}
  (2006), no.~2, 571--576 (English).

\bibitem{Marques2005}
F.~C. Marques, \emph{Existence results for the {Yamabe} problem on manifolds
  with boundary}, Indiana Univ. Math. J. \textbf{54} (2005), no.~6, 1599--1620
  (English).

\bibitem{Marques2007}
\bysame, \emph{Conformal deformations to scalar-flat metrics with constant mean
  curvature on the boundary}, Commun. Anal. Geom. \textbf{15} (2007), no.~2,
  381--405 (English).

\bibitem{Mendes2019TAMS}
A.~Mendes, \emph{Rigidity of marginally outer trapped (hyper)surfaces with
  negative {{\(\sigma \)}}-constant}, Trans. Am. Math. Soc. \textbf{372}
  (2019), no.~8, 5851--5868 (English).

\bibitem{Mendes2019}
\bysame, \emph{Rigidity of volume-minimising hypersurfaces in {Riemannian}
  5-manifolds}, Math. Proc. Camb. Philos. Soc. \textbf{167} (2019), no.~2,
  345--353 (English).

\bibitem{Mendes2022}
\bysame, \emph{Rigidity of free boundary {MOTS}}, Nonlinear Anal., Theory
  Methods Appl., Ser. A, Theory Methods \textbf{220} (2022), 15 (English),
  Id/No 112841.

\bibitem{MicallefMoraru2015}
M.~Micallef and V.~Moraru, \emph{Splitting of 3-manifolds and rigidity of
  area-minimising surfaces}, Proc. Am. Math. Soc. \textbf{143} (2015), no.~7,
  2865--2872 (English).

\bibitem{Moraru2016}
V.~Moraru, \emph{On area comparison and rigidity involving the scalar
  curvature}, J. Geom. Anal. \textbf{26} (2016), no.~1, 294--312 (English).

\bibitem{Nunes2013}
I.~Nunes, \emph{Rigidity of area-minimizing hyperbolic surfaces in
  three-manifolds}, J. Geom. Anal. \textbf{23} (2013), no.~3, 1290--1302
  (English).

\bibitem{PessoaVerasVieira}
L.~F. Pessoa, E.~Véras, and B.~Vieira, \emph{Area estimates for capillary cmc
  hypersurfaces with non-positive {Y}amabe invariant}, to appear.

\bibitem{Schoen1984}
R.~Schoen, \emph{Conformal deformation of a {Riemannian} metric to constant
  scalar curvature}, J. Differ. Geom. \textbf{20} (1984), 479--495 (English).

\bibitem{ShoenYau1979}
R.~Schoen and S.-T. Yau, \emph{Existence of incompressible minimal surfaces and
  the topology of three dimensional manifolds with non-negative scalar
  curvature}, Ann. Math. (2) \textbf{110} (1979), 127--142 (English).

\bibitem{ShoenYau1987}
\bysame, \emph{The structure of manifolds with positive scalar curvature},
  Directions in partial differential equations, {Proc}. {Symp}.,
  {Madison}/{Wis}. 1985, {Publ}. {Math}. {Res}. {Cent}. {Univ}. {Wis}.
  {Madison} 54, 235-242 (1987)., 1987.

\bibitem{Stahl1996}
A.~Stahl, \emph{Regularity estimates for solutions to the mean curvature flow
  with a {Neumann} boundary condition}, Calc. Var. Partial Differ. Equ.
  \textbf{4} (1996), no.~4, 385--407 (English).

\bibitem{Trudinger1968}
N.~S. Trudinger, \emph{Remarks concerning the conformal deformation of
  {Riemannian} structures on compact manifolds}, Ann. Sc. Norm. Super. Pisa,
  Sci. Fis. Mat., III. Ser. \textbf{22} (1968), 265--274 (English).

\bibitem{Yamabe1960}
H.~Yamabe, \emph{On a deformation of {Riemannian} structures on compact
  manifolds}, Osaka Math. J. \textbf{12} (1960), 21--37 (English).

\end{thebibliography}

\end{document}